\newcommand{\arxiv}[1]{\href{http://arxiv.org/pdf/#1}{arXiv:#1}}
\def\ourmargin#1{\marginnote{\textcolor{red}{\footnotesize\sc #1}}}
\DeclareMathAlphabet{\mathbbe}{U}{bbold}{m}{n}
\newcommand{\simplexcategory}{\mathbbe{\Delta}}
\renewenvironment{abstract}{%
  \ifx\maketitle\relax
    \ClassWarning{\@classname}{Abstract should precede
      \protect\maketitle\space in AMS document classes; reported}%
  \fi
  \global\setbox\abstractbox=\vtop \bgroup
    \normalfont\Small
    \list{}{\labelwidth\z@
      \leftmargin2pc \rightmargin\leftmargin
      \listparindent\normalparindent \itemindent\z@
      \parsep\z@ \@plus\p@
      
    }%
    \item[\hskip\labelsep\scshape\abstractname.]%
}{%
  \endlist\egroup
  \ifx\@setabstract\relax \@setabstracta \fi
}
\def\@setabstract{\@setabstracta \global\let\@setabstract\relax}
\def\@setabstracta{%
  \ifvoid\abstractbox
  \else
    \skip@20\p@ \advance\skip@-\lastskip
    \advance\skip@-\baselineskip \vskip\skip@
    \box\abstractbox
    \prevdepth\z@ 
  \fi
}
\newcommand{\ulpullback}[1][ul]{\save*!/#1-4ex/#1:(-1,1)@^{|-}\restore}
\newcommand{\dlpullback}[1][dl]{\save*!/#1-4ex/#1:(-1,1)@^{|-}\restore}
\newcommand{\drpullback}[1][dr]{\save*!/#1-4ex/#1:(-1,1)@^{|-}\restore}
\def\Map{{\mathrm{Map}}}
\newcommand{\relfin}{\mathrm{\,rel.fin.}}
\newcommand{\finsup}{\mathrm{\,fin.sup.}}
\def\M{M\"obius\xspace}
\newcommand{\FILT}{tight\xspace}
\def\overarrow#1{{\vec{#1}}}
\def\nondeg{\overarrow}
\newcommand{\Phieven}{\Phi_{\text{\rm even}}}
\newcommand{\Phiodd}{\Phi_{\text{\rm odd}}}
\newcommand{\un}{\underline}
\newcommand{\Deltagen}{\simplexcategory_{\text{\rm active}}}
\newcommand{\Deltainj}{\simplexcategory_{\text{\rm inj}}}
\newcommand{\culf}{CULF\xspace}
\renewcommand{\Im}{\operatorname{Im}}
\newcommand{\Nat}{\operatorname{Nat}}
\newcommand{\Cons}{\operatorname{Cons}}
\def\GG{\mathbf{G}}
\providecommand{\norm}[1]{\left| {#1}\right|}
\def\into{\hookrightarrow}
\newcommand{\shortsetminus}{\,\raisebox{1pt}{\ensuremath{\mathbb r}\,}}
\newcommand{\pro}[1]{\underleftarrow{#1}}
\newcommand{\ind}[1]{\underrightarrow{#1}}
\providecommand{\kat}[1]{\text{\textbf{\textsl{#1}}}}
\newcommand{\LIN}{\kat{LIN}}
\newcommand{\lin}{\kat{lin}}
\newcommand{\upperstar}{^{\raisebox{-0.25ex}[0ex][0ex]{\(\ast\)}}}
\newcommand{\lowershriek}{_!}
\newcommand{\isopil}{\stackrel{\raisebox{0.1ex}[0ex][0ex]{\(\sim\)}}%
			{\raisebox{-0.15ex}[0.28ex]{\(\rightarrow\)}}}
\newcommand{\tensor}{\otimes}
\newcommand{\op}{^{\text{{\rm{op}}}}}
\newcommand{\Set}{\kat{Set}}
\newcommand{\Grpd}{\mathcal{S}}
\newcommand{\grpd}{\mathcal{F}}
\newcommand{\N}{\mathbb{N}}
\newcommand{\Q}{\mathbb{Q}}
\newcommand{\CC}{\mathcal{C}}
\def\rTo{\longrightarrow}
\newcommand{\name}[1]{\ulcorner #1\urcorner}
\newcommand{\Fun}{\operatorname{Fun}}
\newcommand{\Aut}{\operatorname{Aut}}
\newcommand{\id}{\operatorname{id}}
\def\subsection{\@startsection{subsection}{2}%
  \z@{.5\linespacing\@plus.7\linespacing}{.5\linespacing}%
  {\normalfont\bfseries}}
\newtheorem{lemma}{Lemma}[section]
\newtheorem{prop}[lemma]{Proposition}
\newtheorem{thm}[lemma]{Theorem}
\newtheorem{theorem}[lemma]{Theorem}
\newtheorem{cor}[lemma]{Corollary}
\theoremstyle{definition}
\newtheorem{eks}[lemma]{Example}
\newtheorem{BM}[lemma]{Remark}
\newtheorem{taller}[lemma]{$\!\!$}
\newenvironment{blanko}[1]%
{\begin{taller}{\normalfont\bfseries  #1}\normalfont}%
{\end{taller}}
\newenvironment{blanko*}[1]{\begin{list}{\bf {#1} }%
{\setlength{\labelsep}{0mm}\setlength{\leftmargin}{0mm}%
\setlength{\labelwidth}{0mm}\setlength{\listparindent}{\parindent}%
\setlength{\parsep}{\parskip}\setlength{\partopsep}{0mm}}%
\item%
}{\end{list}}
\newenvironment{deff}%
{\begin{list}{\em Definition. }%
{\setlength{\labelsep}{0mm}\setlength{\leftmargin}{0mm}%
\setlength{\labelwidth}{0mm}\setlength{\listparindent}{\parindent}%
\setlength{\parsep}{\parskip}\setlength{\partopsep}{0mm}}%
\item}{\end{list}}
\newenvironment{proof*}[1]{\begin{list}{\em #1 }%
{\setlength{\labelsep}{0mm}\setlength{\leftmargin}{0mm}%
\setlength{\labelwidth}{0mm}\setlength{\listparindent}{\parindent}%
\setlength{\parsep}{\parskip}\setlength{\partopsep}{0mm}}%
\item}{\qed\end{list}}
\thanks{%
  The first author 
  was partially supported by grants 
  MTM2012-38122-C03-01,  
  MTM2013-42178-P,       
  2014-SGR-634,          
  MTM2015-69135-P,   
  MTM2016-76453-C2-2-P  (AEI/FEDER, UE), and   
  2017-SGR-932,          
  the second author 
  by 
  MTM2013-42293-P, 
  MTM2016-80439-P  (AEI/FEDER, UE),
  and
  2017-SGR-1725,
  and the third author   
  by
  MTM2013-42178-P and
  MTM2016-76453-C2-2-P (AEI/FEDER, UE)}
\author{Imma G\'alvez-Carrillo}
\address{Departament de Matem\`atiques
      \\Universitat Polit\`ecnica de Catalunya
	  }
\email{m.immaculada.galvez@upc.edu}
\author{Joachim Kock}
\address{Departament de Matem\`atiques
       \\Universitat Aut\`onoma de Barcelona
	   }
\email{kock@mat.uab.cat}
\author{Andrew Tonks}
\address{Department of Mathematics\\ 
University of Leicester
}
\email{apt12@le.ac.uk}
\title[Decomposition spaces and M\"obius inversion]{Decomposition spaces, incidence algebras and 
M\"obius inversion II: completeness, length filtration, and finiteness}
\date{}        
\begin{document}

\vspace*{-8mm}

\textcolor{red}{\hspace*{-1.2em}\fbox{ 
\begin{minipage}{150mm}
NOTE: This version corrects a mistake in the published version 
(Adv.~Math.~2018).\linebreak
The changes are in 7.12--7.17.
\end{minipage}
}
}

\vspace*{6mm}

\begin{abstract}
  This is the second in
  a trilogy of papers
  introducing and studying the notion of
  decomposition space as a general framework for incidence algebras and \M
  inversion, with coefficients in $\infty$-groupoids.  A decomposition space is
  a simplicial $\infty$-groupoid satisfying an exactness condition weaker than
  the Segal condition.  Just as the Segal condition expresses
  composition, the new condition expresses decomposition.

  In this paper, we introduce various technical conditions on decomposition 
  spaces.  
  The first is a completeness condition (weaker than Rezk completeness),
  needed to control
  simplicial
  nondegeneracy.  For complete decomposition spaces
  we establish a general \M inversion principle, expressed as an explicit 
  equivalence of $\infty$-groupoids.
  Next we analyse two finiteness conditions on decomposition spaces.  The first,
  that of locally finite length, guarantees the existence of the important length
  filtration for the associated incidence coalgebra.  We show that a
  decomposition space of locally finite length is actually the left Kan extension
  of a semi-simplicial space.
  The second finiteness condition, local finiteness, ensures we can take
  homotopy cardinality to pass from the level of $\infty$-groupoids to the level
  of $\Q$-vector spaces.

  These three conditions --- completeness, locally finite length, and local finiteness 
  ---
  together define our notion of
  \emph{\M decomposition space},
  which extends Leroux's notion of \M category (in turn a common 
  generalisation of
  the locally finite posets of Rota et al.\ and of the finite
  decomposition monoids of Cartier--Foata), but which also covers many
  coalgebra constructions which do not arise from \M categories, such
  as the Fa\`a di Bruno and Connes--Kreimer bialgebras.

  Note: The notion of decomposition space was arrived at independently by
  Dyckerhoff and Kapranov (arXiv:1212.3563) who call them unital $2$-Segal
  spaces.
\end{abstract}

\subjclass[2010]{18G30, 16T10; 06A11, 05A19, 18-XX, 55Pxx}


\maketitle

\vspace*{-8mm}

\small

\tableofcontents

\vspace*{-16mm}

\normalsize

\setcounter{section}{-1}

\addtocontents{toc}{\protect\setcounter{tocdepth}{1}}

\section{Introduction}

In the first paper of this trilogy~\cite{GKT:DSIAMI-1},
we introduced the notion of
decomposition space as a general framework for incidence (co)algebras.
It is equivalent to the notion of unital $2$-Segal space of Dyckerhoff
and Kapranov~\cite{Dyckerhoff-Kapranov:1212.3563}.
The
relevant main results are recalled in Section \ref{sec:prelims} below.  A
decomposition space is a simplicial $\infty$-groupoid $X$ satisfying a certain
exactness condition, weaker than the Segal condition.  Just as the Segal
condition expresses
composition, the new condition expresses
decomposition, 
and implies the existence of an 
incidence (co)algebra.  There is a rich supply of examples in
combinatorics~\cite{GKT:ex}.  An easy example is the 
decomposition space of graphs (yielding the chromatic Hopf 
algebra~\cite{Schmitt:hacs}), which will serve as a running example.
In the present paper we proceed to 
establish a \M inversion principle for what we call {\em complete}
decomposition spaces, and analyse the associated finiteness issues.

Classically~\cite{Rota:Moebius}, the \M inversion principle states that the zeta
function
of any incidence algebra (of a locally finite poset, say, or more
generally a \M category in the sense of Leroux~\cite{Leroux:1975}) is invertible
for the convolution product; its inverse is by definition the \M function.  The
\M inversion formula is a powerful and versatile counting device.
Since it
is an equality stated at the vector-space level in the incidence algebra, it
belongs to algebraic combinatorics rather than bijective combinatorics.
It is possible to give \M inversion a bijective meaning, by following
the objective method, pioneered in this context by Lawvere and
Menni~\cite{LawvereMenniMR2720184}, which seeks to lift algebraic identities to
the `objective level' of (finite) sets and bijections, working with certain
categories spanned by the combinatorial objects instead of  with vector
spaces spanned by isoclasses of these objects.  The algebraic identity then
appears as the cardinality of the bijection established at the objective level.

To illustrate the objective viewpoint, observe that a vector in the free vector
space on a set $B$ is just a collection of scalars indexed by (a finite subset
of) $B$.  The objective counterpart is a family of sets indexed by $B$, i.e.~an
object in the slice category $\Set_{/B}$.  `Linear maps' at this level are given
by spans $A \leftarrow M \to B$.  The \M inversion principle states an equality
between certain linear maps (elements in the incidence algebra).  At the
objective level, such an equality can be expressed as a bijection between sets
in the spans representing those linear functors.  In this way, the algebraic
identity is revealed to be just the cardinality of a bijection of sets, which
carry much more structural information.
As an example, the objective
counterpart of the binomial algebra is the category of species with the Cauchy
tensor product~\cite{GKT:ex}, a much richer structure, and at the objective
level there are obstructions to cancellations in the M\"obius function that take
place at the numerical level only.  The significance of these phenomena is not
yet clear, and is under investigation~\cite{GKT:ex}.
Lawvere and Menni~\cite{LawvereMenniMR2720184} established an objective version of the \M inversion principle for \M categories in the sense of Leroux~\cite{Leroux:1975}.

Our discovery in \cite{GKT:DSIAMI-1} is that
something considerably weaker than a category suffices to construct an incidence
algebra, namely a decomposition space.  This discovery is interesting even at
the level of simplicial sets, but we work at the level of simplicial $\infty$-groupoids.  Thus, the
role of vector spaces is played by slices of the $\infty$-category of
$\infty$-groupoids.  In~\cite{GKT:HLA} we have developed the necessary `homotopy
linear algebra' and homotopy cardinality, extending and streamlining many results of
Baez--Hoffnung--Walker~\cite{Baez-Hoffnung-Walker:0908.4305} who worked with
$1$-groupoids.

The decomposition-space axiom on a simplicial
$\infty$-groupoid $X$ is expressly the condition needed for a canonical
coalgebra structure to be induced on the slice $\infty$-category $\Grpd_{/X_1}$, 
(where $\Grpd$ denotes the $\infty$-category of $\infty$-groupoids, also called spaces).
The comultiplication is the linear functor
$$
\Delta : \Grpd_{/X_1} \to \Grpd_{/X_1} \tensor  \Grpd_{/X_1}
$$
given by the span
$$
X_1 \stackrel{d_1}\longleftarrow X_2 \stackrel{(d_2,d_0)}\longrightarrow 
X_1\times X_1.
$$
This can be read as saying that comultiplying an edge $f\in X_1$ returns the sum
of all pairs of edges $(a,b)$ that are the short edges of a $2$-simplex with long
edge $f$.  In the case that $X$ is the nerve of a category, this is the sum of
all pairs $(a,b)$ of arrows with composite $b\circ a=f$.

The aims of this paper are to establish a \M inversion principle in the
framework of \emph{complete} decomposition spaces, and also to introduce the
necessary \emph{finiteness} conditions on a complete decomposition space to
ensure that incidence (co)algebras and \M inversion descend to classical
vector-space-level coalgebras on taking the homotopy cardinality of the objects
involved.  Along the way we also establish some auxiliary results of a
more technical nature which are needed in the applications in the sequel
papers~\cite{GKT:MI,GKT:restriction,GKT:ex}. 

\medskip

\medskip

We proceed to summarise the main results.

\medskip

After briefly reviewing in Section~\ref{sec:prelims} the notion of
decomposition space and the notion of {\culf} maps between them ---
simplicial maps that induce coalgebra homomorphisms
--- we come 
to the notion of completeness in Section~\ref{sec:complete}:

\begin{deff}
  We say that a decomposition space $X$ is {\em complete} 
  (\ref{complete})
  when $s_0: X_0 \to
  X_1$ is a monomorphism.  It then follows that all degeneracy
  maps are monomorphisms (Lemma \ref{all-s-mono}).
\end{deff}
The motivating feature of this notion is that all issues concerning degeneracy
can then be settled in terms of the canonical projection maps $X_r \to (X_1)^r$
sending a simplex to its principal edges: a simplex in a complete decomposition
space is nondegenerate precisely when all its principal edges are nondegenerate
(Corollary \ref{effective=nondegen}).  Let $\nondeg X_r \subset X_r$ denote the
subspace of these nondegenerate simplices.

\smallskip

For any decomposition space $X$, the comultiplication on $\Grpd_{/X_1}$ yields a
convolution product on the linear dual $\Grpd^{X_1}$
(that is, the category of linear functors from $\Grpd_{/X_1}$ to $\Grpd$) 
called the {\em incidence
algebra} of $X$.  This contains, in particular, the {\em zeta functor} $\zeta$,
given by the span $X_1 \stackrel=\leftarrow X_1 \to 1$, and the counit
$\varepsilon$ (neutral for convolution) given by $X_1 \leftarrow X_0 \to
1$. 
In a complete decomposition space $X$ we can consider the spans $X_1 \leftarrow
\nondeg X_r \to 1$ and the linear functors $\Phi_r$ they define in the incidence
algebra of $X$.  We can now establish the decomposition-space version of the
\M inversion principle, in the spirit of \cite{LawvereMenniMR2720184}:

\smallskip

\noindent
{\bf Theorem~\ref{thm:zetaPhi}.} {\em
For a complete decomposition space, there are explicit equivalences
$$
\zeta * \Phieven
\;\;\simeq\;\; \varepsilon\;\; +\;\; \zeta * \Phiodd
,\qquad\qquad
\Phieven *\zeta \;\;\simeq\;\; \varepsilon \;\;+ \;\; \Phiodd*\zeta.
$$
}

It is tempting to read this as saying that ``$\Phieven -\Phiodd$'' is the
convolution inverse of $\zeta$, but the lack of additive inverses in $\Grpd$
necessitates our sign-free formulation.  Upon taking homotopy cardinality, as we
will later, this yields the usual \M inversion formula $\mu= \Phieven-\Phiodd$,
valid in the incidence algebra with $\Q$-coefficients.  

\smallskip

Having established the general \M inversion principle on the objective
level, we proceed to analyse the finiteness conditions
on complete decomposition spaces
needed for this
principle to descend to the vector-space level of $\Q$-algebras.
There are two conditions: $X$ should be of locally finite length 
(Section~\ref{sec:length}), and $X$ should be locally finite
(Section~\ref{sec:findec}).  The first is a numerical condition, like a chain 
condition; the second is a homotopy finiteness condition.
Complete 
decomposition spaces satisfying both conditions are called {\em \M decomposition 
spaces} (Section~\ref{sec:M}).  We analyse the two conditions separately.

\begin{deff} (Cf.~\ref{length}.)
  The {\em length} of an arrow $f$ is the greatest dimension of a nondegenerate
  simplex with long edge $f$.  We say that a complete decomposition space is
  {\em of locally finite length} --- we also say {\em tight} --- when every
  arrow has finite length.
\end{deff}

Although many
examples coming from combinatorics do satisfy this condition, it is actually a 
rather strong condition, as witnessed by the following result,
which is a consequence of Propositions~\ref{prop:splitKan} and 
\ref{prop:tight=>split}:

\medskip
\noindent {\em Every tight decomposition space 
is the left Kan extension of a semi-simplicial space.}

\medskip

We can prove this result for more general simplicial spaces, and
digress to establish this in Section~\ref{sec:split}: 
we say a complete simplicial space is {\em split} if all face maps preserve
nondegenerate simplices. 
In Corollary~\ref{cor:split=IU} we show this is the
analogue of the condition for categories that identities are indecomposable,
enjoyed in particular by \M categories in the sense of Leroux~\cite{Leroux:1975}.
We prove that a simplicial space is split if and only if it is the left Kan
extension along $\Deltainj \subset \simplexcategory$ of a semi-simplicial space
$\Deltainj\op\to\Grpd$, and in fact we establish more precisely:

\medskip

\noindent
{\bf Theorem~\ref{thm:semisimpl=splitcons}.} {\em
Left Kan extension along $\Deltainj\subset \simplexcategory$
induces an equivalence of $\infty$-categories
$$
\Fun(\Deltainj\op,\Grpd) \simeq \kat{Split}^{\mathrm cons},
$$
where the right-hand side is
the $\infty$-category of split simplicial spaces and conservative maps.}

\medskip

This has the following interesting corollary.

\medskip

\noindent
{\bf Proposition~\ref{prop:split=semi}.}
{\em 
  Left Kan extension along $\Deltainj\subset \simplexcategory$ induces
  an equivalence between the $\infty$-category of 
  $2$-Segal semi-simplicial spaces and ULF maps, and the $\infty$-category of
  split decomposition spaces and \culf maps.
}
\medskip

We show that a complete decomposition space $X$ is tight if and only if it has a filtration
$$
X_\bullet^{(0)} \into X_\bullet^{(1)} \into \cdots \into X
$$
of \culf monomorphisms, the so-called {\em length filtration}.
This is precisely the structure needed to get a filtration of the incidence
coalgebra (\ref{coalgebrafiltGrpd}).

\bigskip

In Section~\ref{sec:findec} we impose the finiteness condition needed to be able
to take homotopy cardinality and obtain coalgebras and algebras at the numerical level 
of $\Q$-vector spaces (and profinite-dimensional $\Q$-vector spaces).

\begin{deff}
An $\infty$-groupoid $S$ is {\em locally finite} if at each base point $x$ the
  homotopy groups $\pi_i (S,x)$ are finite for $i\geq1$ and are trivial for $i$
  sufficiently large.  It is called {\em finite} if furthermore it has only
  finitely many components. A map of $\infty$-groupoids is called a {\em finite map} if its fibres are finite $\infty$-groupoids.

  A decomposition space $X$ is called {\em locally finite} (\ref{finitary}) when
  $X_1$ is a locally finite $\infty$-groupoid
  and $s_0:X_0\to X_1$ and $d_1: X_2 \to X_1$ are finite maps.
\end{deff}

The condition `locally finite'
extends the notion of locally finite for posets.
The condition
ensures that the coalgebra structure descends to 
coefficients in finite $\infty$-groupoids, and hence, 
via homotopy cardinality, to
$\Q$-algebras.  In Section~\ref{sec:tioncoeff} we calculate the section 
coefficients (structure constants for the (co)multiplication) in some
easy cases. 

Finally we introduce the \M condition: 
\begin{deff}
  A complete decomposition space is called {\em \M} (\ref{M})
  when it is locally finite and of locally finite length (i.e.~is tight).
\end{deff}
These are the conditions needed for the general \M inversion formula
to descend to coefficients in finite $\infty$-groupoids
and hence $\Q$-coefficients, giving the following formula
for the \M function (convolution inverse to the zeta function):
$$
\norm{\mu} = \norm{\Phieven}-\norm{\Phiodd} .
$$

\medskip

We have strived throughout to distill the most natural
conditions from the requirements imposed by applications of the theory,
and we find it an attractive feature that all
the conditions can be formulated categorically.
Just as the decomposition-space axiom is an exactness
condition (that certain `active-inert'
pushouts in $\simplexcategory$
are taken to pullbacks), it is noteworthy
that further conditions we require
--- completeness, stiffness, indecomposable units, and
splitness --- are also exactness conditions (stipulating that
certain other classes of pushouts are taken to pullbacks,
cf.~\ref{completeexact},
\ref{stiff}, 
\ref{IU}, 
and Corollary~\ref{cor:splitexact}).
This fact is both conceptually pleasing and
facilitates efficient arguments. 

\medskip

\noindent {\bf Related work.}
The notion of decomposition space was discovered independently by Dyckerhoff 
and Kapranov~\cite{Dyckerhoff-Kapranov:1212.3563}, who call them unital 
$2$-Segal spaces.  While some of the basic results in \cite{GKT:DSIAMI-1}
were also proved in \cite{Dyckerhoff-Kapranov:1212.3563}, the present paper
has no overlap with \cite{Dyckerhoff-Kapranov:1212.3563}.

The results in this paper on \M inversion are in the tradition of Leroux et
al.~\cite{Leroux:1975}, \cite{Content-Lemay-Leroux}, \cite{Leroux:IJM},
D\"ur~\cite{Dur:1986}, and Lawvere--Menni~\cite{LawvereMenniMR2720184}.  There is
a different notion of \M category, due to Haigh~\cite{Haigh}.  The two notions
have been compared, and to some extent unified, by
Leinster~\cite{Leinster:1201.0413}, who calls Leroux's \M inversion {\em fine}
and Haigh's {\em coarse} (as it only depends on the underlying graph of the
category).  We should mention also the $K$-theoretic \M inversion for
quasi-finite EI categories of L\"uck and collaborators \cite{Luck:1989},
\cite{Fiore-Luck-Sauer:0908.3417}.

\medskip

\noindent
{\bf Note.}
  This paper is the second in a series, originally posted on the arXiv 
  as a single manuscript {\em Decomposition spaces, incidence algebras 
  and \M inversion}~\cite{GKT:1404.3202} but split for publication into:
  \begin{enumerate}
  \item[(0)] 
    Homotopy linear algebra
    \cite{GKT:HLA}
    
  \item 
    Decomposition spaces, incidence algebras and \M inversion I:
    basic theory
  \cite{GKT:DSIAMI-1}
    \item Decomposition spaces, incidence algebras and \M inversion II:
    completeness, length filtration, and finiteness
    [this paper]

  \item
  Decomposition spaces, incidence algebras and M\"obius inversion III:
  the decomposition space of \M intervals
    \cite{GKT:MI}

  \item 
    Decomposition spaces and restriction species
    \cite{GKT:restriction}

    \item 
    Decomposition spaces in combinatorics
    \cite{GKT:ex}.

  \end{enumerate}

\medskip

\noindent
{\bf Acknowledgments.}
  This work has been influenced very much by Andr\'e Joyal,
  whom we thank for enlightening discussions and advice, and specifically
  for suggesting to us to investigate the notion of split decomposition
  spaces.  We also thank Louis Carlier and Alex Cebri\'an for useful feedback,
  and the referee for pertinent suggestions, which led to improved exposition.

\section{Preliminaries on decomposition spaces}
\label{sec:prelims}

\begin{blanko}{$\infty$-groupoids.}
  We work in the $\infty$-category $\Grpd$ of $\infty$-groupoids, also called \emph{spaces}, and in closely
  related $\infty$-categories such as its slices.  By $\infty$-category we mean
  quasi-category in the sense of Joyal~\cite{Joyal:qCat+Kan}, \cite{Joyal:CRM},
  but follow rather the terminology of Lurie~\cite{Lurie:HTT}.
  Most of our arguments are elementary, though, and for this
  reason we can get away with model-independent reasoning
  rather than working with the Joyal model structure on simplicial 
  sets.  In particular, when we refer to the $\infty$-category
  $\Grpd_{/B}$ (whose objects are maps of $\infty$-groupoids $X\to B$), we only refer to an
  $\infty$-category determined up to equivalence by a certain universal 
  property, and do not make any distinction between the specific models for
  this object exploited by Joyal and Lurie (normal slice and fat slice).
\end{blanko}

\begin{blanko}{Pullbacks and Fibres.}
  Pullbacks play an essential role in many of our arguments.
  By \emph{pullback} we always mean pullback in the $\infty$-category $\Grpd$.
  This notion enjoys a universal property which in the model-independent
  formulation is similar to the universal property of
  the pullback in ordinary categories (such as $\Set$).
  Again, we shall only ever
  need homotopy invariant properties, 
  making it irrelevant which particular model is chosen for the notion of
  pullback in the Joyal model structure for quasi-categories.
  In particular, the \emph{fibre} $X_b$ of a map $f:X\to B$ over a base point $b$ in $B$ is also a homotopy invariant notion: it is the pullback of $f$ along the map $\name b:1\to B$ that picks out the base point.
  
  One property which we shall use repeatedly is
  the following elementary lemma (a proof can be found in
  \cite[4.4.2.1]{Lurie:HTT}).
\end{blanko}

\begin{lemma}\label{pbk}
  In any diagram of $\infty$-groupoids
  $$
  \vcenter{\xymatrix{
   \cdot\dto \rto &  \cdot\dto \rto &  \cdot\dto \\
  \cdot\rto & \cdot \rto & \cdot 
  }}
  $$
  if the outer rectangle and the right-hand square are pullbacks,
  then the left-hand square is a pullback.
\end{lemma}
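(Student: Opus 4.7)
The plan is to use the mapping-space characterization of pullbacks in $\Grpd$: a square is a pullback if and only if, for every $\infty$-groupoid $T$, the square obtained by applying $\Map(T,-)$ is a pullback of mapping spaces. This reduces the lemma to the corresponding statement about ordinary pullbacks of $\infty$-groupoids, where the classical categorical proof via universal properties carries over verbatim, so I would follow the same structure.

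First, I would label the diagram: write $A,B,C$ for the top row and $D,E,F$ for the bottom row, so that the left-hand square has corners $A,B,D,E$, the right-hand square has corners $B,C,E,F$, and the outer rectangle has corners $A,C,D,F$. Fix an arbitrary $\infty$-groupoid $T$ and apply $\Map(T,-)$ to the whole diagram; by hypothesis the resulting outer rectangle and right-hand square are pullbacks in $\Grpd$ (really, in spaces).

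Second, I would compute
$$
\Map(T,B) \times_{\Map(T,E)} \Map(T,D)
\;\simeq\; \bigl(\Map(T,C) \times_{\Map(T,F)} \Map(T,E)\bigr) \times_{\Map(T,E)} \Map(T,D),
$$
where the equivalence uses the right-hand square. A standard cancellation of iterated pullbacks then identifies this with $\Map(T,C) \times_{\Map(T,F)} \Map(T,D)$, which by the outer-rectangle hypothesis is canonically equivalent to $\Map(T,A)$. Tracing through the natural maps shows that the composite equivalence is precisely the one induced by the left-hand square, proving that this square is a pullback.

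The only non-routine step is the iterated-pullback cancellation, which is itself an instance of the same pasting lemma but one level down, now for mapping spaces; it can either be invoked from the $\infty$-categorical limit calculus (for which the reference to \cite{Lurie:HTT} already cited suffices) or verified directly from the universal property of pullbacks in $\Grpd$. Apart from this, the argument is formal and matches the $1$-categorical proof, consistent with the paper's stated policy of working model-independently whenever possible.
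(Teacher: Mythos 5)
Your argument is correct, but it is worth pointing out that the paper does not actually prove this lemma at all: it simply cites \cite[4.4.2.1]{Lurie:HTT}, so there is no ``paper proof'' to match. What you have written is essentially the standard argument underlying that citation, specialised to $\Grpd$: detect pullbacks by mapping spaces $\Map(T,-)$, substitute the equivalence $\Map(T,B)\simeq \Map(T,C)\times_{\Map(T,F)}\Map(T,E)$ coming from the right-hand square, cancel, and compare with the outer rectangle. Two small points deserve emphasis. First, the cancellation $\bigl(X\times_Z Y\bigr)\times_Y W\simeq X\times_Z W$ is not an instance of the statement being proved but of the \emph{other} (easy) direction of the pasting lemma --- ``two pullback squares paste to a pullback rectangle'' --- which follows formally from the universal property (or from the decomposition of limits in \cite{Lurie:HTT}); so there is no circularity, and you are right that this is the only step needing an external input. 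Second, the phrase ``tracing through the natural maps shows that the composite equivalence is the one induced by the left-hand square'' is doing real work: one must check that the chain of equivalences is compatible with the given cone $A\to B$, $A\to D$, which is what upgrades ``$A$ is abstractly equivalent to the pullback'' to ``the left-hand square is a pullback.'' With those caveats made explicit, your proof is a perfectly good self-contained substitute for the citation.
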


\begin{blanko}{Monomorphisms.}\label{def:mono}
  The homotopy invariant notion of monomorphism of $\infty$-groupoids plays an important
  role throughout this paper, notably through
  the definition of complete decomposition space~(\ref{complete}).
  A map of $\infty$-groupoids is a {\em monomorphism} when its
  (homotopy) fibres are $(-1)$-groupoids
(i.e.~are either empty or contractible).

  (We warn against a potential point of confusion:
  in the Joyal model, $\infty$-groupoid means Kan
  complex, but the homotopy-invariant notion of monomorphism
  between $\infty$-groupoids is {\em not} the same as
  levelwise injective simplicial map between Kan complexes.
For example, any equivalence of $\infty$-groupoids is a monomorphism,
but not every equivalence of Kan complexes is levelwise
injective.  Conversely the inclusion $1\to BG$ of a point into the
classifying space of a group is not a monomorphism of 
$\infty$-groupoids, but it is injective levelwise in the sense of Kan 
complexes.)

  In some respects, this notion of monomorphism does behave as for sets: for example, if $f:X\to Y$
  is a monomorphism, then there is a complement $Z:=Y\shortsetminus X$ such that
  $X + Z\simeq Y$.  Hence a monomorphism is essentially an equivalence
  from $X$ onto some connected components of $Y$.  On the other hand, 
  a crucial difference between sets and $\infty$-groupoids is that diagonal
  maps of $\infty$-groupoids are not in general monomorphisms.  In fact $X \to X \times 
  X$ is a monomorphism if and only if $X$ is discrete (i.e.~equivalent to a set).
\end{blanko}

\begin{blanko}{Linear algebra with coefficients in $\infty$-groupoids \cite{GKT:HLA}.}
  The slice $\infty$-categories of the form $\Grpd_{/I}$ form the objects
  of a symmetric monoidal $\infty$-category $\LIN$, described in
  detail in \cite{GKT:HLA}: the morphisms are the linear functors,
  meaning that they preserve homotopy sums, or equivalently indeed
  all colimits.  Such functors are given by spans: the span
  $$
  I \stackrel p \leftarrow M \stackrel q \to J$$
  defines the linear functor
  $$
  q\lowershriek \circ p\upperstar : \Grpd_{/S}  \longrightarrow  \Grpd_{/T} 
  $$
  given by pullback along $p$ followed by composition with $q$.
  The $\infty$-category $\LIN$ can play the role of the category of
  vector spaces, although to be strict about that interpretation,
  finiteness conditions should be imposed, as we do later in this paper 
  (Section~\ref{sec:findec}).
  
  The symmetric monoidal structure on $\LIN$ is easy to describe on objects:
  we have
  $$
  \Grpd_{/I} \tensor \Grpd_{/J} {}:={} \Grpd_{/I\times J},
  $$
  just as the tensor product of vector spaces with bases indexed by sets $I$ 
  and $J$ is the vector space with basis indexed by $I\times J$.
  The neutral object is $\Grpd_{/1}\simeq\Grpd$.
\end{blanko}

\begin{blanko}{Simplicial spaces.}
    Throughout, our main objects of study will be simplicial spaces
    $X:\simplexcategory\op\to\Grpd$,
    by which we mean objects in the functor $\infty$-category
    $\Fun(\simplexcategory\op,\Grpd)$.  A simplicial space (synonym for
    simplicial $\infty$-groupoid) is thus
    a homotopy-coherent simplicial diagram of $\infty$-groupoids.
    Note that this means that the simplicial identities are 
    squares that commute up to a homotopy, such as for example
    \begin{equation}\label{eq:decompex}\vcenter{
	\xymatrix{
	X_3 \ar[r]^{d_3} \ar[d]_{d_1} & X_2 \ar[d]^{d_1} \\
	X_2 \ar[r]_{d_2} & X_1,}
	}
    \end{equation}
    and it makes sense to ask whether such a square is a pullback.
    We shall never need to spell out the homotopies, as 
    only their structural properties are needed.  
    
    By an {\em $n$-simplex} of
    $X$ we mean an object in the $\infty$-groupoid $X_n$, which in
    turn can be described (via the Yoneda lemma for 
    $\infty$-groupoid-valued presheaves) as the mapping space 
    $\Map(\Delta[n],X)$. If $\sigma$ is an object of $X_n$ then we write $\name{\sigma}:1\to X_n$ for the corresponding map.

    A {\em simplicial map} $f: X \to Y$ between
  simplicial spaces $X$ and $Y$ is by definition
  an object in the mapping space $\Map_{\Grpd}(X,Y)$.
  It amounts to a sequence of maps $f_i : X_i \to Y_i$ commuting
  with the face and degeneracy maps up to specified
  coherent homotopies. 
\end{blanko}

We briefly review the main notions and results from 
the first paper in the trilogy~\cite{GKT:DSIAMI-1},
and in particular the notion of decomposition space.
This notion is equivalent to that of unital $2$-Segal space,
introduced by Dyckerhoff and Kapranov~\cite{Dyckerhoff-Kapranov:1212.3563}.
While Dyckerhoff and Kapranov formulate the condition in 
terms of triangulation of convex polygons, our formulation refers
to the categorical notion of active and inert maps, which
we recall next.

\begin{blanko}{Active and inert maps (generic and free maps).}\label{generic-and-free}
  The category $\simplexcategory$ of nonempty finite ordinals and monotone
  maps has an active-inert factorisation system.  An arrow $a: [m]\to [n]$
  in $\simplexcategory$ is \emph{active} (also called {\em generic}) when
  it preserves end-points, $a(0)=0$ and $a(m)=n$; and it is \emph{inert}
  (also called \emph{free}) if it is distance preserving, $a(i+1)=a(i)+1$
  for $0\leq i\leq m-1$.  A coface map $d^j:[m]\to[m+1]$ is active if and
  only if it is {\it inner}, i.e. $1 \leq j \leq m$.  The active maps are
  generated by the codegeneracy maps and the inner coface maps, while the
  inert maps are generated by the outer coface maps.  Every morphism in
  $\simplexcategory$ factors uniquely as an active map followed by an inert
  map.
  
  The notions of generic and free maps are general notions in category theory,
  introduced by Weber
  \cite{Weber:TAC13,Weber:TAC18}, who extracted the notions from earlier work of
  Joyal~\cite{Joyal:foncteurs-analytiques}; a recommended entry point to the
  theory is Berger--Melli\`es--Weber~\cite{Berger-Mellies-Weber:1101.3064}. We
  have adopted the more recent terminology `active/inert' (due to
  Lurie~\cite{Lurie:HA}), which is more suggestive of the role the two classes
  of maps play.
\end{blanko}

\begin{lemma}\label{genfreepushout}
  Active and inert maps in $\simplexcategory$ admit pushouts along each other, and the 
  resulting maps are again active and inert.
\end{lemma}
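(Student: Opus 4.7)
The plan is to construct the pushout explicitly. Suppose we are given a span consisting of a generic map $g\colon [m]\to [p]$ and a free map $f\colon [m]\to [n]$. Since $f$ is free, it identifies $[m]$ with a contiguous subinterval $\{k,k+1,\dots,k+m\}\subseteq [n]$ for a unique $k$ with $0\le k\le n-m$, so that $[n]$ decomposes into three abutting intervals of lengths $k$, $m$, and $n-k-m$. I would define the candidate pushout object as $[n+p-m]$, thought of as the analogous concatenation of three intervals of lengths $k$, $p$, and $n-k-m$. The corresponding two maps are
$$
g'\colon [n]\to [n+p-m],\qquad g'(i)=\begin{cases} i & 0\le i\le k,\\ k+g(i-k) & k\le i\le k+m,\\ k+p+(i-k-m) & k+m\le i\le n,\end{cases}
$$
and $f'\colon [p]\to [n+p-m]$ the distance-preserving inclusion $i\mapsto k+i$.

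First I would verify that $g'$ is well-defined and monotone: the three pieces agree at the overlap points $i=k$ and $i=k+m$ precisely because $g(0)=0$ and $g(m)=p$, that is, precisely because $g$ is generic. Commutativity of the square is then immediate, since both $g'\circ f$ and $f'\circ g$ send $i\in [m]$ to $k+g(i)$. Moreover $g'$ preserves endpoints ($g'(0)=0$ and $g'(n)=n+p-m$ by a quick case check on whether $k=0$ or $k+m=n$), so $g'$ is generic, and $f'$ is manifestly free.

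For the universal property, given any cocone $(h\colon [n]\to [q],\,\ell\colon [p]\to [q])$ with $hf=\ell g$, define $u\colon [n+p-m]\to [q]$ piecewise by $u=h$ on the first $k$-interval, $u=\ell$ (suitably shifted) on the middle $p$-interval, and $u=h$ (shifted) on the last $(n-k-m)$-interval. The two boundary compatibilities $h(k)=\ell(0)$ and $h(k+m)=\ell(p)$ are exactly the cocone equation $hf=\ell g$ evaluated at $0$ and $m$, invoking once more that $g$ preserves endpoints; monotonicity of $u$ follows from monotonicity of its pieces plus these equalities. Uniqueness of $u$ is immediate because the images of $g'$ and $f'$ jointly cover $[n+p-m]$. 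The only real content is the bookkeeping above; the genuine conceptual point is that endpoint-preservation of generic maps is precisely the condition making the gluing of $g$ with the identities on the flanking intervals well defined and monotone, and this same fact is what handles boundary compatibility in the universal property. Edge cases (such as $k=0$, $k+m=n$, or $m=0$, in which case $g$ is forced to be an identity) are degenerate and cause no difficulty.
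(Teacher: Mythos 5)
Your construction is correct and complete: the concatenation $[n+p-m]$ of the two flanking intervals of the free inclusion $f$ with the codomain of $g$, together with the piecewise-defined $g'$ and the shifted inclusion $f'$, is exactly the pushout, and your verification correctly isolates endpoint-preservation of $g$ as the one fact making both the gluing and the universal property work. Note that the paper states this lemma without proof (it is a recalled preliminary from Part I of the series), so there is nothing to compare against here; your argument is the standard explicit one and fills that gap correctly.
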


\begin{blanko}{Decomposition spaces \cite{GKT:DSIAMI-1}.} 
  A simplicial space $X:\simplexcategory\op\to\Grpd$ is called a {\em decomposition space}
  when it takes active-inert pushouts in $\simplexcategory$ to pullbacks. An example of such a square is \eqref{eq:decompex} above.

  Every Segal space is a decomposition space.
  For example, the nerve of a category or a poset is a decomposition 
  space.
  In a Segal space $X$, all the information is contained in $X_0$ and $X_1$ 
  and the composition map $d_1: X_2 \to X_1$.
  This cannot be said for decomposition spaces in general, but we still have the following
  important property.
\end{blanko}

\begin{lemma}\label{lem:s0d1}
  In a decomposition space $X$, every active face map is a pullback of $d_1:
  X_2 \to X_1$, and every degeneracy map is a pullback of $s_0 :X_0 \to X_1$.
\end{lemma}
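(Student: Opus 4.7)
The plan is to exhibit each generic face map $d_i:X_n\to X_{n-1}$ (with $0<i<n$) and each degeneracy $s_i:X_n\to X_{n+1}$ as the image under $X$ of a generic-free pushout square in $\Delta$ whose top side is $d^1:[1]\to[2]$ or $s^0:[1]\to[0]$, respectively. The decomposition space axiom --- generic-free pushouts are sent to pullbacks --- then immediately delivers the desired pullback in $\Grpd$.

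For an inner coface $d^i:[n-1]\to[n]$ with $0<i<n$, I would take the square
$$
\xymatrix{
{[1]} \ar[r]^{d^1} \ar[d]_{f} & {[2]} \ar[d]^{g} \\
{[n-1]} \ar[r]_{d^i} & {[n]}
}
$$
in which $f$ is the free map $0\mapsto i-1,\ 1\mapsto i$ and $g$ is the free map $0\mapsto i-1,\ 1\mapsto i,\ 2\mapsto i+1$; both composites send $0\mapsto i-1$ and $1\mapsto i+1$, so the square commutes. For a codegeneracy $s^i:[n+1]\to[n]$ with $0\le i\le n$, I would analogously take
$$
\xymatrix{
{[1]} \ar[r]^{s^0} \ar[d]_{f'} & {[0]} \ar[d]^{g'} \\
{[n+1]} \ar[r]_{s^i} & {[n]}
}
$$
with $f'$ the free map $0\mapsto i,\ 1\mapsto i+1$ and $g'$ the free map $0\mapsto i$; both composites are constant at $i$, so this too commutes.

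The key step is then to recognise each of these commuting squares as the generic-free pushout whose existence is guaranteed by Lemma~\ref{genfreepushout}. That lemma provides, for the given top generic map and left free map, a pushout whose remaining two sides are free (right) and generic (bottom); uniqueness of the generic-free factorisation of the outer composite $[1]\to[n]$ then forces this abstract pushout to agree with the square already displayed. Applying $X$ and invoking the decomposition space axiom produces the pullback squares
$$
\xymatrix{
X_n \ar[r] \ar[d]_{d_i} & X_2 \ar[d]^{d_1} \\
X_{n-1} \ar[r] & X_1
}
\qquad
\xymatrix{
X_n \ar[r] \ar[d]_{s_i} & X_0 \ar[d]^{s_0} \\
X_{n+1} \ar[r] & X_1
}
$$
exhibiting $d_i$ as a pullback of $d_1$ and $s_i$ as a pullback of $s_0$, as required.

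The main point requiring care is the identification of the constructed squares with the abstract generic-free pushouts of Lemma~\ref{genfreepushout}. This is essentially combinatorial, and can be settled either by verifying the universal property directly from an explicit computation in $\mathbf{FinSet}$ together with the generic-free factorisation system on $\Delta$, or by the uniqueness argument sketched above; I expect no further obstacles.
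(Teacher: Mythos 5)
The paper states this lemma without proof here (it is recalled from Part 1, \cite{GKT:DSIAMI-1}), and your argument is exactly the intended one: exhibit each inner coface $d^i:[n-1]\to[n]$ and each codegeneracy $s^i:[n+1]\to[n]$ as the pushout of the generic map $d^1:[1]\to[2]$, respectively $s^0:[1]\to[0]$, along the evident free map, and then apply the decomposition-space axiom. Your squares are the correct ones; the only point to watch is that the ``uniqueness of the generic--free factorisation of $[1]\to[n]$'' shortcut does not by itself identify the abstract pushout object with $[n]$, so you should rely on the direct universal-property check you mention, which is immediate: a commuting cocone $p:[2]\to[m]$, $q:[n-1]\to[m]$ satisfies $q(i-1)=p(0)\le p(1)\le p(2)=q(i)$ and hence factors uniquely through $[n]$ via $r(j)=q(j)$ for $j<i$, $r(i)=p(1)$, $r(j)=q(j-1)$ for $j>i$, and the degeneracy square is handled in the same way using surjectivity of $s^i$.
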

\begin{proof}
If we consider the inert maps $f_j:[1]\to[m]$ given by $f_j(0)=j$ and $f_j(1)=j+1$ for $j=0,\dots,m-1$, then we have the following active--inert pushouts in $\simplexcategory$,
 $$\vcenter{\xymatrix{{}[1]\rto^{f_j}\dto_{d^1}&[m]\dto^{d^{j+1}}\\[2]\rto&\ulpullback{}[m+1],}}
\qquad\qquad\vcenter{\xymatrix{{}[1]\rto^{f_j}\dto_{s^0}&[m]\dto^{s^{j}}\\[0]\rto&\ulpullback{}[m-1],}}
$$
which are sent to pullbacks by any decomposition space $X$.
\end{proof}

As far as incidence coalgebras are concerned, 
the notion of decomposition space can be seen as an abstraction of 
that of poset:
it is precisely the condition required to obtain a counital coassociative
comultiplication on $\Grpd_{/X_1}$.  Precisely, the following is the main 
theorem of \cite{GKT:DSIAMI-1}.

\begin{theorem}\label{thm:comultcoass} \cite{GKT:DSIAMI-1}
  For $X$ a decomposition space, the slice $\infty$-category $\Grpd_{/X_1}$ has
  the structure of a strong homotopy comonoid in the symmetric monoidal
  $\infty$-category $\LIN$, with the comultiplication $\Delta$
and counit $\varepsilon$
  defined by the spans
  $$
  X_1 \stackrel{d_1}\longleftarrow X_2 \stackrel{(d_2,d_0)}
  \longrightarrow X_1 \times X_1 ,\qquad \qquad 
  X_1 \stackrel{s_0}\longleftarrow X_0 \longrightarrow 1 . 
  $$
\end{theorem}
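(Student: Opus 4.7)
The plan is to realise the required coalgebra structure by spans, verify coassociativity and counitality via explicit pullback computations in $\LIN$, and then appeal to the full simplicial structure of $X$ to encode all higher coherences. I take $\Delta$ to be the displayed span and define the counit $\epsilon:\Grpd_{/X_1}\to\Grpd$ by the span $X_1\xleftarrow{s_0} X_0\to 1$.

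For coassociativity, I compute both iterated comultiplications. Since composition in $\LIN$ is given by pullback of spans, $(\Delta\otimes\id)\circ\Delta$ is represented by $X_1\leftarrow P\to X_1^3$, where $P$ reduces to the pullback $X_2\times_{X_1}X_2$ formed with $d_2$ on the left and $d_1$ on the right. Under $X$ this pullback corresponds to the generic-free pushout in $\Delta$ of the generic inner face $d^1\colon[1]\to[2]$ with the free outer face $d^2\colon[1]\to[2]$, whose apex is $[3]$; the decomposition space axiom therefore identifies $P\simeq X_3$. The symmetric generic-free pushout yields the parallel statement for $(\id\otimes\Delta)\circ\Delta$. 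Using the simplicial identities $d_1 d_1 = d_1 d_2$ and $d_0 d_0 = d_0 d_1$ one checks that both composite spans coincide with
$$X_1\xleftarrow{d_1 d_1} X_3\xrightarrow{(d_2 d_3,\; d_0 d_3,\; d_0 d_1)} X_1\times X_1\times X_1,$$
whose right-hand leg records the three principal edges of a $3$-simplex.

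For counitality, the composite $(\epsilon\otimes\id)\circ\Delta$ is obtained by pulling back $d_2\colon X_2\to X_1$ along $s_0\colon X_0\to X_1$. Under $X$ this corresponds to the pushout of the generic $s^0\colon[1]\to[0]$ with the free $d^2\colon[1]\to[2]$, which is $[1]$; the decomposition space axiom gives $P\simeq X_1$, equipped with the structure maps $s_0\colon X_1\to X_2$ and $d_1\colon X_1\to X_0$. The identities $d_1 s_0=\id$ and $d_0 s_0=\id$ then collapse the composite span to the identity span on $X_1$. The case $(\id\otimes\epsilon)\circ\Delta$ is symmetric, using $d_0$ in place of $d_2$.

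The main obstacle is to upgrade these equivalences of spans into coherent data witnessing a \emph{strong homotopy} comonoid in $\LIN$, since in the $\infty$-categorical setting every identity arrives with a tower of higher coherences. The idea that resolves this is to let the simplicial $\infty$-groupoid $X$ itself supply all the coherence data: the assignment $[n]\mapsto\Grpd_{/X_n}$, with face and degeneracy maps inducing linear functors via spans, is a simplicial object in $\LIN$, and the decomposition space axiom applied uniformly to every generic-free pushout in $\Delta$ produces compatible equivalences realising this simplicial object as the coherent structure of a comonoid on $\Grpd_{/X_1}$. In this way no higher coherence needs to be verified by hand; the full package is supplied by $X$ itself.
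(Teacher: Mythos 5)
This theorem is not proved in the present paper at all: it is recalled verbatim from Part I \cite{GKT:DSIAMI-1}, so there is no in-text proof to compare against. Your low-dimensional computations are nevertheless correct and do match the way the result is established there: the composite $(\Delta\otimes\id)\circ\Delta$ is the span obtained from the pullback $X_2\,{}_{d_2}\!\times_{X_1,\,d_1}X_2$, which the decomposition-space axiom (applied to the generic--free pushout of $d^1$ and $d^2$ with apex $[3]$) identifies with $X_3$, and both iterated comultiplications reduce to $X_1\xleftarrow{g}X_3\to X_1^3$ recording the long edge and the three principal edges; similarly your counit computation via the pushout of $s^0$ and $d^2$ with apex $[1]$ is exactly right. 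So the ``$1$-categorical shadow'' of the argument is in order.

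The genuine gap is in your final paragraph, which is where the actual content of the theorem lives. You claim that the assignment $[n]\mapsto\Grpd_{/X_n}$ ``is a simplicial object in $\LIN$'' and that this automatically supplies the tower of higher coherences. As stated this cannot work: a simplicial object in $\LIN$ would assign to every map of $\Delta$ a single linear functor, all of the same variance, whereas the comultiplication is of \emph{mixed} variance --- it is $(d_2,d_0)\lowershriek\circ d_1\upperstar$, pullback along a generic map followed by pushforward along free maps. To encode this one needs a functor out of a suitable category of spans in $\Delta$ (equivalently, a construction organised by the generic--free factorisation system of \ref{generic-and-free}), and one must verify that this assignment is functorial up to coherent homotopy; the decomposition-space axiom enters precisely as the Beck--Chevalley condition making the interchange of $\upperstar$ and $\lowershriek$ coherent. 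Constructing that functor and extracting from it the strong homotopy comonoid structure is the substantive technical work carried out in Part I, and saying that ``the full package is supplied by $X$ itself'' restates the goal rather than achieving it. Your proposal therefore establishes coassociativity and counitality up to (unstructured) equivalence, but not the coherent comonoid structure that the theorem asserts.
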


If $X$ is the nerve of a locally finite category or poset,
then $X_2$ is the set of composable pairs of 
arrows, and (after passing to $\Q$-vector spaces by taking homotopy cardinality as in \ref{card} and \cite{GKT:HLA}) the formula
is the classical comultiplication formula
$$
\Delta(f) = \sum_{b\circ a=f} a \tensor b .
$$

\begin{blanko}{\culf functors.}
  For the present purposes, the relevant notion of morphism between
  decomposition spaces is that of \culf functors, since these induce
  homomorphisms of the associated incidence coalgebras: a simplicial map
  (between arbitrary simplicial spaces) is called {\em ULF} (\emph{unique
  lifting of factorisations}) if the naturality square for every inner
  coface map is a pullback, and it is called {\em conservative} if the
  naturality square for every codegeneracy map is a pullback.  We write
  {\em \culf} for conservative and ULF, that is, the naturality square for
  every active map in $\simplexcategory$ is a pullback.

  For maps between Rezk complete Segal groupoids, such as fat nerves of 
  categories, the notion of
  conservative is the classical notion, i.e.~only invertible maps are sent to
  invertible ones, and ULF is a homotopy version of the notion of
  unique lifting of factorisations.
\end{blanko}

\begin{blanko}{Example.}\label{ex:graphsdecomp}
  We describe a decomposition space $\GG$ of finite graphs, whose incidence
  coalgebra is the chromatic Hopf algebra of Schmitt~\cite{Schmitt:hacs}.
  This will serve as a running example throughout the paper.  For
  definiteness, by `graph' we will mean simple non-directed graph, though
  other notions of graph would work too.
  
  Let $\GG_n$ be the groupoid of finite graphs with an {\em $n$-layering}
  (meaning an ordered partition of the vertex set into $n$ `layers', which
  may be empty), and isomorphisms between them.  In particular, $\GG_0$ is
  the contractible groupoid consisting only of the empty graph (the only
  graph admitting a $0$-layering), $\GG _1$ is just the groupoid of all
  finite graphs, and $\GG _2$ is the groupoid of finite graphs with vertex
  set partitioned into two.  All the $\GG _n$ assemble into a simplicial
  groupoid: the face maps join two adjacent layers, or project away the
  bottom or top layer; the degeneracy maps insert an empty layer.  It is
  easy to see that this is not a Segal space: a $2$-layered graph cannot be
  reconstructed from the graphs of its layers, since the information about
  edges joining the layers is missing.  One can check that it
  {\em is} a decomposition space: that the square
  $$\xymatrix{
    \GG _2 \ar[d]_{d_0} & \GG _3 \dlpullback \ar[l]_{d_2} \ar[d]^{d_0} \\
    \GG _1 & \GG _2 \ar[l]^{d_1}
  }$$
  is a pullback is to say that a graph with a 3-layering ($\in \GG _3$)
  can be  reconstructed uniquely from a pair of elements in $\GG _2$ with common 
  image in $\GG _1$ (under the indicated face maps). 
  The following picture represents elements corresponding to each other
  in the four groupoids.

\colorlet{ourgrey}{black!18}

\tikzset{
  greydot/.pic={
	\fill (0,0) circle[radius=0.04];
  }
}

\tikzset{
  biggraph/.pic={
	\begin{scope}[ourgrey, line width=0.5pt]
	  \draw (0.35, 0.263) pic {greydot}
	  -- (0.175, 0.438) pic {greydot}
	  -- (-0.21, 0.263) pic {greydot}
	  -- (-0.263, 0.525) pic {greydot}
	  -- (0.21, 0.7) pic {greydot}
	  -- (0.525, 0.525) pic {greydot}
	  -- (0.7, 0.175) pic {greydot}
	  -- (0.7, -0.263) pic {greydot}
	  -- (0.35, -0.35) pic {greydot}
	  -- (0.175, -0.7) pic {greydot}
	  -- (-0.175, -0.7) pic {greydot}
	  -- (-0.525, -0.438) pic {greydot}
	  -- (-0.613, -0.088) pic {greydot}
	  -- (-0.613, 0.263) pic {greydot}
	  -- (-0.175, -0.14) pic {greydot}
	  -- (0.175, -0.175) pic {greydot};	
	  \draw (0.7, 0.175) -- (0.175, -0.175);	
	  \draw (0.35, -0.35) -- (0.175, -0.175)  -- (0.35, 0.263) -- (0.525, 0.525);	
	  \draw (-0.525, -0.438) -- (-0.175, -0.14) -- (0.175, -0.7);	
	  \draw  (-0.613, 0.263) -- (-0.21, 0.263);
    \end{scope}
  }
}

\tikzset{
  smallgraph/.pic={
	\begin{scope}[ourgrey, line width=0.5pt]
	  \draw (0.35, 0.263) pic {greydot}
	  -- (0.175, 0.438) pic {greydot};
	  \draw (0.21, 0.7) pic {greydot}
	  -- (0.525, 0.525) pic {greydot}
	  -- (0.7, 0.175) pic {greydot}
	  -- (0.7, -0.263) pic {greydot}
	  -- (0.35, -0.35) pic {greydot}
	  -- (0.175, -0.7) pic {greydot};
	  \draw (0.7, 0.175) -- (0.175, -0.175);
	  \draw (0.35, -0.35) -- (0.175, -0.175) pic {greydot}
	  -- (0.35, 0.263) -- (0.525, 0.525);
     \end{scope}
  }
}

\begin{center}
  \vspace*{12pt}
  \begin{tikzpicture}[line width=0.5pt]
	\footnotesize
      
	\begin{scope}[shift={(0.0, 0.0)}]
	  \draw (0.963, -0.963) node {$\in \GG _1$};
	  \draw (0.0, 0.0) pic {smallgraph};      
	  \draw (0.0, 0.0)+(-95:0.963) arc[start angle=-95, end angle=95, radius=0.963];
	  \draw (-0.088, 0.954)  .. controls (0.088, 0.175) and (0.088, -0.175) .. (-0.088, -0.954);
	\end{scope}

	\begin{scope}[shift={(4.375, 0.0)}]
	  \draw (0.963, -0.963) node {$\in \GG _2$};
	  \draw (0.0, 0.0) pic {smallgraph};
	  \draw (0.0, 0.0)+(-95:0.963) arc[start angle=-95, end angle=95, radius=0.963];
	  \draw (-0.088, 0.954) .. controls (0.088, 0.175) and (0.088, -0.175) .. (-0.088, -0.954);
	  \draw (0.044, 0.088) .. controls (0.35, 0.14) and (0.525, -0.123) .. (0.963, -0.088);
	\end{scope}

	\begin{scope}[shift={(0.0, 3.5)}]
	  \draw (0.963, -0.963) node {$\in \GG _2$};
	  \draw (0.0, 0.0) pic {biggraph};
	  \draw (0.0, 0.0) circle (0.963);
	  \draw (-0.088, 0.954) .. controls (0.088, 0.175) and (0.088, -0.175) .. (-0.088, -0.954);
	\end{scope}

	\begin{scope}[shift={(4.375, 3.5)}]
	  \draw (0.963, -0.963) node {$\in \GG _3$};
	  \draw (0.0, 0.0) pic {biggraph};
	  \draw (0.0, 0.0) circle (0.963);
	  	  \draw (-0.088, 0.954) .. controls (0.088, 0.175) and (0.088, -0.175) .. (-0.088, -0.954);
	  \draw (0.044, 0.088) .. controls (0.35, 0.14) and (0.525, -0.123) .. (0.963, -0.088);
	\end{scope}

	\begin{scope}[shift={(3.15, 2.275)}]
	  \draw (0.0, 0.0) -- (0.0, 0.175);
	  \draw (0.0, 0.0) -- (0.175, 0.0);
	\end{scope}
      
	\draw (2.8, 3.413) -- + (0.0, 0.175);
	\draw[->] (2.8, 3.5) -- + (-1.225, 0.0);
	\draw (2.188, 3.71) node {$d_2$};
	\draw (2.8, -0.088) -- + (0.0, 0.175);
	\draw[->] (2.8, 0.0) -- + (-1.225, 0.0);
	\draw (2.188, -0.21) node {$d_1$};
	\draw (-0.088, 2.1) -- + (0.175, 0.0);
	\draw[->] (0.0, 2.1) -- + (0.0, -0.7);
	\draw (-0.21, 1.75) node {$d_0$};
	\draw (4.288, 2.1) -- + (0.175, 0.0);
	\draw[->] (4.375, 2.1) -- + (0.0, -0.7);
	\draw (4.603, 1.75) node {$d_0$};
  \end{tikzpicture}
  \vspace*{12pt}
\end{center}

  The horizontal maps join the last two layers.  The vertical
  maps forget the first layer.  Clearly the diagram commutes.  To reconstruct the
  graph with a 3-layering (upper right-hand corner), most of the
  information is already available in the upper left-hand corner, namely the
  underlying graph and all the subdivisions except the one between layer 2 and
  layer 3.  But this information is precisely available in the lower right-hand
  corner, and their common image in $\GG _1$ says precisely how this missing piece
  of information is to be implanted.

  In the comultiplication formula, $d_1{}\upperstar$ takes a graph
  $G$ to the groupoid of all possible $2$-layerings on $G$, and
  $(d_2,d_0)\lowershriek$ returns the two layers, meaning the
  graphs induced by the two subsets of the vertex set $V$.  After
  taking homotopy cardinality, this is precisely the comultiplication of
  the chromatic Hopf algebra of Schmitt~\cite{Schmitt:hacs}: it takes 
  a basis element $G$ to the sum $\sum G|V_1\otimes G|V_2$, the sum being 
  over all $2$-layerings $V= V_1 + V_2$.

  There is a \culf functor from the decomposition space of graphs
  to the decomposition space of finite sets (defined similarly --- 
  its incidence coalgebra is the binomial coalgebra~\cite{GKT:ex}),
  which to a graph associates its vertex set.  The \culf condition
  simply says that the $n$-layerings on a graph are determined
  by the $n$-layerings of the vertex set.  This \culf functor 
  induces a coalgebra homomorphism from the chromatic coalgebra
  to the binomial coalgebra.  
\end{blanko}

\section{Complete decomposition spaces}

\label{sec:complete}

In this section we introduce the notion of complete 
decomposition spaces, which is needed to talk about 
nondegenerate simplices in a meaningful way.

\begin{blanko}{Complete decomposition spaces.}\label{complete}
  A decomposition space $X$ is called {\em complete} if $s_0:X_0 \to X_1$ is a
  monomorphism of $\infty$-groupoids.
\end{blanko}

\begin{blanko}{Discussion.}\label{complete-discussion}
  It is clear that a Rezk complete Segal space is complete in the sense of
  \ref{complete}.  While it makes sense to state the Rezk completeness
  condition for decomposition spaces too (cf.~\ref{def:Rezk} below), our condition \ref{complete} covers some
  important examples which are not Rezk complete, such as the ordinary nerve of
  a group (cf.\ Example \ref{ex:discr=compl} below). The incidence algebra of the nerve
  of a group is the group algebra --- certainly an example worth covering.

  The completeness condition is necessary to define $\Phieven$ and $\Phiodd$
  (the even
  and odd parts of the `\M functor', see \ref{Phi}) and to establish the \M
  inversion principle at the objective level (Theorem \ref{thm:zetaPhi}).
  The completeness condition is
  also needed to make sense of the notion of length (\ref{length}), and to
  define the length filtration (\ref{def:filt}), which is of independent
  interest, and is also required to be able to take homotopy cardinality of \M inversion.
\end{blanko}

\begin{blanko}{Examples.}\label{ex:discr=compl}
  If a decomposition space $X$ is \emph{discrete}, meaning that each $X_i$ is a set,
  then it will be complete, because $s_0 : X_0 \to X_1$ is a section to $d_0: X_1 \to
  X_0$ and is therefore an injection of sets.  Slightly more generally, a decomposition
  space $X$ will be complete if $d_0:X_1 \to X_0$ is discrete (that is, has discrete
  (homotopy) fibres) since a section to a discrete map is always a monomorphism.
  
  For the simplest example of a decomposition space which is not complete, let
  $G$ be a nontrivial group, and denote the corresponding one-object
  groupoid by $BG$.  Consider the simplicial groupoid $X$ with $X_n = (BG)^n$.
  Here $s_0: 1 \to BG$ is not a monomorphism (although it is a section of
  $BG \to 1$): its (homotopy) fibre is the set of elements of $G$.
\end{blanko}

\begin{blanko}{Example (continued from \ref{ex:graphsdecomp}).}\label{ex:graphscomplete}
  The decomposition space $\GG$ of finite graphs is complete: indeed, $s_0 : \GG _0 \to \GG _1$
  assigns to the empty graph with zero layers the empty graph with one layer.
  Clearly this has trivial automorphism group, so $s_0$ is a monomorphism.
\end{blanko}

The following basic result follows immediately from Lemma~\ref{lem:s0d1}.
\begin{lemma}\label{all-s-mono}
  In a complete decomposition space,
  all degeneracy maps are monomorphisms.
\end{lemma}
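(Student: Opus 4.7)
The plan is to combine the completeness hypothesis with Lemma \ref{lem:s0d1} and the pullback-stability of monomorphisms. By definition (\ref{complete}), completeness of $X$ says precisely that $s_0 : X_0 \to X_1$ is a monomorphism. Lemma \ref{lem:s0d1} then tells us that every degeneracy map of $X$ fits into a pullback square whose opposite edge is $s_0 : X_0 \to X_1$. So it suffices to observe that the class of monomorphisms of $\infty$-groupoids is stable under pullback.

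For the stability, recall from \ref{def:mono} that a map $f : X \to Y$ is a monomorphism precisely when all its fibres are $(-1)$-groupoids. Given a pullback square
$$
\xymatrix{
P \dto_{f'} \rto & X \dto^{f} \\
B \rto & Y
}
$$
in $\Grpd$, each fibre of $f'$ over a point $b \in B$ is equivalent to the fibre of $f$ over the image of $b$ in $Y$, hence is again empty or contractible. Thus $f'$ is a monomorphism whenever $f$ is.

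Applying this to each degeneracy map $s_i : X_n \to X_{n+1}$, which by Lemma \ref{lem:s0d1} is a pullback of $s_0 : X_0 \to X_1$, and using completeness to conclude that the latter is a monomorphism, we obtain that $s_i$ is a monomorphism as well. There is no real obstacle here; the content of the lemma is entirely carried by Lemma \ref{lem:s0d1}, and the present statement is a routine consequence once one has the (standard) fact that monomorphisms of $\infty$-groupoids are closed under pullback.
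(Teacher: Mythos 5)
Your proof is correct and is exactly the argument the paper intends: the paper states that the lemma ``follows immediately from Lemma~\ref{lem:s0d1}'', i.e.\ every degeneracy map is a pullback of $s_0:X_0\to X_1$, which is a monomorphism by completeness, and monomorphisms are stable under pullback. You have simply spelled out the (correct) fibrewise justification of that stability, which the paper leaves implicit.
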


\begin{blanko}{Completeness for simplicial spaces.}\label{completesimplicial}
  We shall briefly need completeness also for general simplicial spaces, and the
  first batch of results holds in this generality.  We shall say that $X:\simplexcategory\op\to\Grpd$ is {\em complete} if all degeneracy maps are
  monomorphisms.  In view of Lemma~\ref{all-s-mono}, this agrees with the
  previous definition when $X$ is a decomposition space.
\end{blanko}

\begin{blanko}{Completeness as an exactness condition.}\label{completeexact}
  It is interesting to note that completeness
  is an exactness condition, just as the decomposition-space axiom itself.
  Indeed, for $0 \leq i \leq n$ the squares
  $$\xymatrix{
     [n] \drpullback  & [n]  \ar[l]_{=} \\
     [n] \ar[u]^{=} & [n{+}1] \ar[u]_-{s^i} \ar[l]^{s^i}
  }$$
  are pushouts in $\simplexcategory$, and the completeness condition on  $X:\simplexcategory\op\to\Grpd$ is precisely to send these 
  pushouts to pullbacks (because a map is mono if and only if its pullback 
  along itself is an equivalence).  If $X$ is
  assumed to be a decomposition space, then the completeness condition can be
  expressed by the
requirement that the following single square is a pullback.
  $$\xymatrix{
     X_0 \drpullback \ar[r]^=\ar[d]_= & X_0 \ar[d]^{s_0} \\
     X_0 \ar[r]_{s_0} & X_1 .
  }$$
\end{blanko}

For the rest of this section, $X$ will denote a complete simplicial space,
except where it is explicitly stated to be a complete decomposition space.

\begin{blanko}{Word notation.}\label{w}
  Since $s_0:X_0\to X_1$ is mono,
  we can identify $X_0$ with a full sub-$\infty$-groupoid of $X_1$.  We
  denote by $X_a$ its  complement, the full
  sub-$\infty$-groupoid of {\em nondegenerate $1$-simplices}: 
  $$
  X_1 = X_0 + X_a .
  $$
  We extend this notation as follows.  Consider the alphabet with three letters
  $\{0,1,a\}$.  Here $0$ indicates degenerate edges $s_0(x)\in X_1$, the
  letter $a$ indicates edges which are nondegenerate, and $1$ indicates edges
  which may be degenerate or nondegenerate.
  For $w$ a word in this alphabet $\{0,1,a\}$, of length $|w|=n$,
  put
  $$
  X^w := \prod_{i\in w} X_i \subset (X_1)^n .
  $$
  This inclusion is full since $X_a \subset X_1$ is full by completeness.
  Denote by
  $X_w$ the $\infty$-groupoid of $n$-simplices whose
  principal edges have the types indicated in the word $w$, 
  or more explicitly, the full sub-$\infty$-groupoid of $X_n$ given by the pullback diagram
  \begin{equation}\label{eq:Xw}
\vcenter{  \xymatrix{
  X_w \drpullback \ar[r] \ar[d] & X_n \ar[d] \\
  X^w  \ar[r] &(X_1)^n.
  }}
  \end{equation}
\end{blanko}

\begin{lemma}\label{lem:cons-X1}
  If $X$ and $Y$ are complete simplicial spaces and $f:Y \to X$ is conservative,
  then $Y_a$ maps to $X_a$, and the following square is a
  pullback:
  $$
  \xymatrix{
  Y_1 \ar[d] & Y_a \dlpullback \ar[l] \ar[d]  \\
  X_1 & \ar[l] X_a.}$$
\end{lemma}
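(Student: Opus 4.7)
The strategy is to use that conservativity of $f$ means exactly that the naturality square for $s_0$ is a pullback, and then combine this with the remark in \ref{def:mono} that a monomorphism of $\infty$-groupoids is the inclusion of a union of connected components (so complements behave well under pullback).

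First I would write down the pullback square that conservativity of $f$ gives us, namely
$$
\xymatrix{
Y_0 \ar[r]^{s_0} \ar[d] \drpullback & Y_1 \ar[d]^{f_1} \\
X_0 \ar[r]^{s_0} & X_1.
}
$$
Since $Y$ is complete, the top $s_0$ is mono, and since $X$ is complete the bottom $s_0$ is mono, so by \ref{def:mono} both $Y_0 \subset Y_1$ and $X_0 \subset X_1$ are inclusions of unions of connected components. Write $Y_1 \simeq Y_0 + Y_a$ and $X_1 \simeq X_0 + X_a$ for the resulting decompositions.

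Next I would observe that for the mono $X_0 \hookrightarrow X_1$, the pullback along $f_1: Y_1 \to X_1$ is naturally identified with the full subgroupoid of $Y_1$ on those connected components whose image lies in $X_0$. The square above says this pullback is $Y_0$. In other words, $f_1$ sends the components of $Y_0$ into $X_0$, and no other component of $Y_1$ maps into $X_0$. Taking complements (which is well defined because we are dealing with inclusions of components), the remaining components of $Y_1$, which by definition make up $Y_a$, must all land in $X_a$. This establishes the first claim of the lemma and, simultaneously, shows that the square
$$
\xymatrix{
Y_1 \ar[d]_{f_1} & Y_a \ar[l] \ar[d] \dlpullback \\
X_1 & X_a \ar[l]
}
$$
is a pullback, because pulling back the decomposition $X_1 = X_0 + X_a$ along $f_1$ yields the decomposition $Y_1 = Y_0 + Y_a$ with matching summands.

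There is no real obstacle; the only subtlety is the careful use of the fact that monomorphisms of $\infty$-groupoids are component inclusions, which is precisely what makes the complement operation compatible with pullback. Once that is accepted, the proof is essentially a one-line disjoint-union argument applied to the conservativity square.
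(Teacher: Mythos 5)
Your proof is correct and follows essentially the same route as the paper: the paper also starts from the conservativity pullback on $s_0$ and deduces the complementary square by extensivity of $\Grpd$, which is exactly the disjoint-union/component-inclusion argument you spell out. The only difference is presentational --- the paper cites the general extensivity fact for sums abstractly, while you unpack it via the description of monomorphisms as component inclusions from \ref{def:mono}.
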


\begin{proof}
  This square is the complement of the pullback saying what conservative means.
  But it is general in extensive $\infty$-categories such as $\Grpd$,
  that in the situation
  $$\xymatrix{ A' \ar[r]\ar[d] & A'+B' \ar[d] & \ar[l] B' \ar[d] \\
  A \ar[r] & A+B & \ar[l] B,}
  $$
  one square is a pullback if and only if the other is.
\end{proof}

\begin{cor}\label{cor:cons-wn}
  If $X$ and $Y$ are complete simplicial spaces and $f:Y \to X$ is
  conservative, then for every word $w \in \{0,1,a\}\upperstar$,
  the following square is a pullback:
  \begin{equation}\label{eq:YnYw}
    \vcenter{\xymatrix{
    Y_n\ar[d] & \ar[l] Y_w \dlpullback \ar[d] \\
    X_n & \ar[l]  X_w.}}
  \end{equation}
\end{cor}
\begin{proof}
  The square is connected to 
  \begin{equation}\label{eq:Y1nYw}
    \vcenter{\xymatrix{
    (Y_1)^n\ar[d] & \ar[l] Y^w \dlpullback \ar[d] \\
    (X_1)^n & \ar[l]  X^w}} 
  \end{equation}
  by two instances of pullback-square \eqref{eq:Xw}, one for $Y$ and one for
  $X$.  It follows from Lemma~\ref{lem:cons-X1} that \eqref{eq:Y1nYw} is a pullback,
  hence also \eqref{eq:YnYw} is a pullback, by Lemma~\ref{pbk}.
\end{proof}

\begin{prop}\label{prop:cULF-nondegen}
  If $X$ and $Y$ are complete simplicial spaces and $f:Y \to X$ is \culf, then
  for any word $w\in \{0,1,a\}\upperstar$
  the following square is a pullback:
  $$
  \xymatrix{
    Y_1 \ar[d]_f& \ar[l]  Y_n& \ar[l]  Y_w
    \dlpullback \ar[d]^f \\
    X_1 & \ar[l]   X_n& \ar[l]   X_w
    .}
  $$
\end{prop}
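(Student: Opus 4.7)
The plan is to factorise the top horizontal map $Y_w \to Y_1$ (and analogously $X_w \to X_1$) as the inclusion $Y_w \hookrightarrow Y_n$ from the defining pullback \eqref{eq:Xw}, followed by the long-edge map $Y_n \to Y_1$, which is the iterated face map $d_1^{n-1}$ corresponding to the endpoint-preserving monotone map $[1]\to[n]$, $0\mapsto 0$, $1\mapsto n$, where $n = |w|$. With this factorisation, the square in the statement appears as the outer rectangle of the pasted diagram
$$
\xymatrix{
Y_1 \ar[d] & Y_n \ar[l] \ar[d] & Y_w \ar[l] \ar[d] \\
X_1 & X_n \ar[l] & X_w \ar[l]\ ,
}
$$
so it will suffice to show that each of the two inner squares is a pullback.

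For the right-hand square I would invoke Corollary~\ref{lem:cons-wn} directly: cULF maps are in particular conservative, so that corollary applies and delivers the required pullback. For the left-hand square, the key observation is that each factor $d_1$ appearing in the composite $d_1^{n-1}: Y_n \to Y_1$ is an inner face map, hence generic in the sense of~\ref{generic-and-free}; composites of generic maps are generic. Since $f$ is cULF, i.e.\ cartesian on all generic maps, the left-hand square is a pullback.

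The outer rectangle then inherits the pullback property by horizontal pasting of pullback squares (the converse direction to Lemma~\ref{pbk}, which holds in any $\infty$-category by the universal property of pullbacks). I do not anticipate a genuine obstacle here: the only points requiring care are identifying the horizontal map $Y_w \to Y_1$ with the long-edge map (determined by the factorisation through $Y_n$) and verifying that this map is generic. Once these are in place the proof reduces to the two-step pullback paste described above.
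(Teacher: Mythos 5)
Your proof is correct and follows essentially the same route as the paper: the paper likewise pastes the pullback square of Corollary~\ref{lem:cons-wn} (using that cULF implies conservative) with the square on the generic map $Y_n\to Y_1$, which is cartesian by the cULF hypothesis. Your extra remarks identifying $Y_w\to Y_1$ with the composite through $Y_n$ and checking that $d_1^{\,n-1}$ is generic are just the details the paper leaves implicit.
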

\begin{proof}
  The required pullback square is a horizontal composite 
  $$\xymatrix{
    Y_1 \ar[d]_f& \ar[l]  Y_n\dlpullback\ar[d]^f\dto& \ar[l]  Y_w
    \dlpullback
    \ar[d]^f \\
    X_1 & \ar[l]   X_n& \ar[l]   X_w
    ,}
  $$
  where the right-hand square is the pullback square \eqref{eq:YnYw} of
  Corollary~\ref{cor:cons-wn}.  The horizontal arrows of the left-hand
  square are induced by the unique active map $[n]\to[1]$, and since $f$ is
  \culf this square is a pullback also.
\end{proof}

\begin{lemma}\label{lem:X1w=sum}
  Let $X$ be a complete simplicial space. Then for any words
  $v,v' \in \{0,1,a\}\upperstar$, we have
  $$
  X_{v1v'} = X_{v0v'} + X_{vav'} , 
  $$
  and hence
  $$
  X_n=\sum_{w\in\{0,a\}^n} X_w.
  $$
\end{lemma}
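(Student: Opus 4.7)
My plan is to prove the first identity by decomposing the position-$(|v|{+}1)$ factor of the product $X^{v1v'}$ using completeness, and then pulling back. For the second statement I will iterate the first one, starting from the trivial identity $X_{1^n} = X_n$.

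First I would unfold the definition of $X^w$ in display \eqref{eq:Xw}: we have $X^{v1v'} = X^v \times X_1 \times X^{v'}$. By the word notation \ref{w}, completeness gives the decomposition $X_1 = X_0 + X_a$ into full subgroupoids. Since $\Grpd$ is extensive (products distribute over sums), this yields
\[
  X^{v1v'} \;=\; X^v \times (X_0 + X_a) \times X^{v'} \;=\; X^{v0v'} + X^{vav'},
\]
and this decomposition is compatible with the inclusion into $(X_1)^n$.

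Next I would assemble the defining pullback \eqref{eq:Xw} for $w = v1v'$. Writing the pullback of $X_n \to (X_1)^n$ along the map $X^{v1v'} \to (X_1)^n$, and using that pullbacks in $\Grpd$ preserve sums (again by extensivity), I get
\[
  X_{v1v'} \;=\; X^{v0v'} \times_{(X_1)^n} X_n \;+\; X^{vav'} \times_{(X_1)^n} X_n \;=\; X_{v0v'} + X_{vav'},
\]
which is the first assertion.

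For the second assertion, observe that $X_{1^n} = X_n$ since in this case the bottom map of \eqref{eq:Xw} is the identity on $(X_1)^n$. Now induct on the number of occurrences of the letter $1$ in a word: replacing each $1$ in turn by the sum $0 + a$ via the first assertion and using that finite sums distribute over finite sums in the extensive $\infty$-category $\Grpd$, after $n$ steps we obtain $X_n = \sum_{w\in\{0,a\}^n} X_w$. The only thing to be careful about is the repeated use of extensivity, but since each splitting produces a coproduct decomposition compatible with the inclusion into $(X_1)^n$, no subtlety arises beyond those already recorded.
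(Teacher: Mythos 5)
Your proposal is correct and follows essentially the same route as the paper: both arguments reduce the first identity to the decomposition $X_1 = X_0 + X_a$ of the base, transported through the defining pullback \eqref{eq:Xw} by extensivity of $\Grpd$ (the paper verifies the two relevant squares are pullbacks via Lemma~\ref{pbk} and then invokes extensivity on the rows, whereas you invoke the equivalent fact that pullback preserves coproducts of the base). The iteration for the second assertion is likewise what the paper's ``and hence'' intends.
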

\begin{proof}
  Consider the diagram
  $$
  \xymatrix{
  X_{v0v'} \drpullback  \ar[r]\ar[d] & X _{v1v'} \ar[d] & \ar[l] \dlpullback X_{vav'} \ar[d]\\
  X^{v0v'} \ar[r] & X^{v1v'} & \ar[l] X^{vav'}
  }$$
  The two squares are pullbacks, by an application of Lemma~\ref{pbk},
  since horizontal composition
  of either with the pullback square \eqref{eq:Xw} for $w=v1v'$ gives again the 
  pullback square \eqref{eq:Xw}, for $w=v0v'$ or $w=vav'$.

  Since the bottom row is a sum diagram, it follows that the top row is also
  (since the $\infty$-category of $\infty$-groupoids is extensive).
\end{proof}

We now specialise to complete decomposition spaces, although the following
result will be subsumed in Section~\ref{sec:stiff} on \emph{stiff simplicial spaces}.
\begin{prop}\label{degen-w}
  Let $X$ be a complete decomposition space.
  Then for any words $v,v'$ in the alphabet $\{0,1,a\}$ we have 
  $$
  X_{v0v'}=\Im(s_{|v|}:X_{vv'}\to X_{v1v'}) .
  $$
  That is, the $k$th principal edge of a simplex $\sigma$ is degenerate if and
  only if $\sigma=s_{k-1}d_k\sigma$.
\end{prop}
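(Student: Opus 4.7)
Set $k := |v|$ and $m := |v'|$. The strategy is to exhibit both $X_{v0v'}$ (as a sub-$\infty$-groupoid of $X_{v1v'}$) and the image of $s_k : X_{vv'} \to X_{v1v'}$ as the same pullback of the mono $s_0 : X_0 \to X_1$ along the projection $q : X_{v1v'} \to X_1$ onto the position-$k$ principal edge. Since $s_0$ is a monomorphism by completeness, pullbacks along it are monomorphisms, and unique up to equivalence over their targets; hence the two resulting sub-$\infty$-groupoids of $X_{v1v'}$ must agree.

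For the first pullback, I would start from the defining squares \eqref{eq:Xw} for $X_{v1v'}$ and $X_{v0v'}$ and observe that the inclusion $X^v\times X_0\times X^{v'} \hookrightarrow X^v\times X_1\times X^{v'}$ is itself obtained (cartesianly) by pulling back $s_0$ along the $k$-th factor projection of $(X_1)^{k+1+m}$. Two applications of the pasting Lemma~\ref{pbk} then exhibit $X_{v0v'} \hookrightarrow X_{v1v'}$ as precisely the pullback of $s_0$ along $q$.

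For the second pullback, I would invoke Lemma~\ref{lem:s0d1} --- this is where the decomposition-space axiom actually enters --- to write the full degeneracy $s_k : X_{k+m} \to X_{k+m+1}$ as a pullback of $s_0$ along the position-$k$ edge projection $X_{k+m+1} \to X_1$. Pulling this square back along $X_{v1v'} \hookrightarrow X_{k+m+1}$ and tracing principal-edge types, I would establish the key identification $X_{v1v'}\times_{X_{k+m+1}}X_{k+m} = X_{vv'}$: a simplex $\tau\in X_{k+m}$ satisfies $s_k\tau\in X_{v1v'}$ iff the remaining principal edges of $\tau$ match $v$ and $v'$, the freshly inserted edge at position $k$ being degenerate and thus of type $1$ automatically. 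A final pasting then realises $s_k : X_{vv'} \to X_{v1v'}$ as the pullback of $s_0$ along $q$, finishing the main equality.

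The equivalent reformulation $\sigma = s_{k-1}d_k\sigma$ is then immediate, using the simplicial identity $d_k s_{k-1} = \id$ on the preimage provided by the first direction. The step I expect to be the fiddliest is the positional bookkeeping in the identification $X_{v1v'}\times_{X_{k+m+1}}X_{k+m} = X_{vv'}$, since one must track carefully how $s_k$ shifts principal edges; this is exactly the kind of manipulation the word notation of \ref{w} was designed to streamline, and it uses only completeness of $X$ together with extensivity, not the decomposition-space axiom itself (which has already been spent in applying Lemma~\ref{lem:s0d1}).
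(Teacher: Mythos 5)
Your proposal is correct and follows essentially the same route as the paper's proof: both exhibit $X_{v0v'}\hookrightarrow X_{v1v'}$ as the pullback of the mono $s_0:X_0\to X_1$ along the position-$|v|$ edge projection ${d_\bot}^{|v|}{d_\top}^{|v'|}$ by pasting the defining squares \eqref{eq:Xw}, and then invoke the decomposition-space axiom (degeneracies as pullbacks of $s_0$ along outer face composites) to identify that same pullback with $s_{|v|}:X_{vv'}\to X_{v1v'}$, concluding with $d_k s_{k-1}=\id$ for the reformulation. The only difference is presentational: you spell out the fibre-product identification $X_{v1v'}\times_{X_{|v|+|v'|+1}}X_{|v|+|v'|}\simeq X_{vv'}$ that the paper leaves implicit.
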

\noindent 
Recall that $|v|$ denotes the length of the word $v$ and,
as always, the notation $\Im$ refers to the essential image.
\begin{proof}
From \eqref{eq:Xw} we see that (independently of the decomposition-space axiom)
  $X_{v0v'}$ is characterised by the top pullback square in the diagram
  $$\xymatrix{
      X_{v0v'} \drpullback \ar[r]\ar[d] & X_{v1v'} \ar@/^1.5pc/[dd]^{{d_\bot\!}^{|v|}\,{d_\top\!}^{|v'|}} \ar[d]
      \\ 
      X^{v0v'} \drpullback \ar[r]\ar[d] & X^{v1v'} \ar[d]
      \\ 
      X_0 \ar[r]_{s_0} & X_1
  }$$
  But the decomposition-space axiom applied to the exterior pullback diagram
  says that the top horizontal map is $s_{|v|}$, and hence identifies $X_{v0v'}$
  with the image of $s_{|v|}: X_{vv'} \to X_{v1v'}$.  For the final statement,
  note that if $\sigma=s_{k-1}\tau$ then $\tau=d_k\sigma$.
\end{proof}
Combining this with Lemma \ref{lem:X1w=sum} we obtain the following result.
\begin{cor} \label{cor:X1w=sum}
  Let $X$ be a complete decomposition space.  For any words
  $v,v'$ in the alphabet $\{0,1,a\}$ we have
  $$
  X_{v1v'} = s_{|v|}(X_{vv'}) + X_{vav'} .
  $$
\end{cor}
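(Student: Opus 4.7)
The plan is to deduce this corollary by combining the two results cited immediately before it, which already do all the real work.

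First, I would invoke Lemma~\ref{lem:X1w=sum} to obtain the sum decomposition
$$X_{v1v'} \;=\; X_{v0v'} \;+\; X_{vav'},$$
which comes from inserting the identity $X_1 = X_0 + X_a$ in the $(|v|+1)$-th slot.

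Next, I would apply Proposition~\ref{degen-w} to identify the first summand. That proposition states $X_{v0v'} = \Im\bigl(s_{|v|}: X_{vv'} \to X_{v1v'}\bigr)$, which is exactly the notation $s_{|v|}(X_{vv'})$ used in the statement. Substituting this into the sum decomposition above yields the claim. (As a sanity check: since $X$ is complete, $s_{|v|}$ is a monomorphism by Lemma~\ref{all-s-mono}, so this essential image is equivalent to $X_{vv'}$ itself, giving the expected interpretation of the summand.)

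There is no real obstacle here; the content of the statement lies entirely in the two preceding results. The role of this corollary is just to repackage Lemma~\ref{lem:X1w=sum} by replacing $X_{v0v'}$ with its description as a degenerate image, which is the form needed in subsequent applications (in particular for the inductive arguments dealing with nondegenerate simplices and the $\Phi$ functors).
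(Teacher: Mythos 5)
Your proposal is correct and is exactly the paper's argument: the corollary is obtained by combining Lemma~\ref{lem:X1w=sum} with Proposition~\ref{degen-w}, as the sentence preceding the statement indicates. Nothing is missing.
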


\begin{blanko}{Effective simplices.}\label{effective}
  A simplex in a complete simplicial space $X$ is called {\em effective}
  when all its principal edges are nondegenerate.
  We put
  $$
  \nondeg{X}_n := X_{a\cdots a} \subset X_n ,
  $$
  the full sub-$\infty$-groupoid of $X_n$ spanned by the effective simplices.  (Every
  $0$-simplex is effective by convention: $\nondeg X_0 = X_0$.)  It is clear
  that outer face maps $d_\bot,d_\top:X_{n}\to X_{n-1}$ preserve effective
  simplices, and that every effective simplex is nondegenerate, i.e.~is not in
  the image of any degeneracy map.  It is a useful feature of complete {\em
  decomposition spaces} that the converse is true too:
\end{blanko}

\begin{cor}\label{effective=nondegen}
  In a complete decomposition space $X$, 
  a simplex is effective if and only if it is nondegenerate:
  $$
  \nondeg{X}_n = X_n\setminus {\textstyle\bigcup_{i=0}^n}\Im(s_i).
  $$
\end{cor}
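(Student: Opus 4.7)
The forward direction, that effective simplices are nondegenerate, was already observed in the setup~\ref{effective}, so the work is in the converse: if a simplex is nondegenerate, then every principal edge is nondegenerate. I would prove this via the contrapositive, identifying the subgroupoid of $n$-simplices with a degenerate $k$th principal edge with the image of $s_{k-1}$ on the nose.

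The key tool is Proposition~\ref{degen-w}. For each $k$ with $0\le k\le n-1$, apply it with $v=1^{k}$ and $v'=1^{n-1-k}$ (so $|v|=k$ and $|v1v'|=n$). This gives
$$
X_{1^{k}01^{n-1-k}} \;=\; \Im\bigl(s_{k}:X_{n-1}\to X_n\bigr).
$$
Since $X_{1^{k}01^{n-1-k}}\subset X_n$ is by definition the full subgroupoid of simplices whose $(k+1)$st principal edge is degenerate, this identifies ``the $(k+1)$st principal edge is degenerate'' with ``the simplex lies in $\Im(s_{k})$''. Taking the union over all $k$ then shows that a simplex lies in $\bigcup_{k=0}^{n-1}\Im(s_{k})$ if and only if at least one of its principal edges is degenerate.

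To turn this into the stated equality of full subgroupoids, I would invoke Lemma~\ref{lem:X1w=sum}, which gives the decomposition
$$
X_n \;=\; \sum_{w\in\{0,a\}^n} X_w.
$$
The summand indexed by $w=a^n$ is precisely $\nondeg X_n$ by definition, while every other summand has at least one letter $0$ in $w$ and is therefore contained in $\Im(s_{k})$ for the appropriate $k$, by the identification above. Hence the complement of $\bigcup_{i}\Im(s_i)$ in $X_n$ is exactly $\nondeg X_n$, which is the required equality.

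I do not anticipate any real obstacle: Proposition~\ref{degen-w} is doing all the heavy lifting (it is here that completeness and the decomposition-space axiom intervene, via its reliance on Lemma~\ref{lem:s0d1}), and what remains is a bookkeeping argument on the sum decomposition of $X_n$ into word-indexed pieces. The only mild subtlety is to be consistent with the indexing convention relating $X_{1^{k}01^{n-1-k}}$ to the image of $s_{k}$, which follows directly from Proposition~\ref{degen-w} once one reads off $|v|=k$.
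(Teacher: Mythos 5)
Your proposal is correct and follows essentially the same route as the paper's own proof: identify each word-subgroupoid $X_{1^{k}01^{n-1-k}}$ with $\Im(s_k)$ via Proposition~\ref{degen-w}, then observe that $\nondeg X_n = X_{a\cdots a}$ is exactly the complement of the union of these subgroupoids (the paper leaves the bookkeeping via Lemma~\ref{lem:X1w=sum} implicit, but it is the same argument). Your care with the indexing, and your implicit correction of the union's upper bound to $n-1$, are both fine.
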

\begin{proof}
  It is clear that $\nondeg{X}_n$ is the complement of 
  $X_{01\cdots1} \cup \cdots \cup X_{1\cdots10}$
  and by Proposition~\ref{degen-w}
  we can identify each of these spaces with the image of a degeneracy map.
\end{proof}

\noindent
In fact this feature is enjoyed 
by a more general class of complete simplicial spaces, called stiff, treated in 
Section~\ref{sec:stiff}.

\medskip

Iterated use of Corollary \ref{cor:X1w=sum} yields 
\begin{cor}\label{cor:Xn}
  For $X$ a complete decomposition space we have 
  $$
  X_{n} = \sum s_{j_k}\dots s_{j_1}(\nondeg X_{n-k}) ,
  $$
  where the sum is over all subsets $\{j_1<\dots<j_k\}$ of $\{0,\dots,n-1\}$.
\end{cor}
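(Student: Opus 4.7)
The plan is to identify the sum decomposition $X_n = \sum_{w \in \{0,a\}^n} X_w$ from Lemma \ref{lem:X1w=sum} with the desired sum over subsets $\{j_1<\dots<j_k\}\subseteq\{0,\dots,n-1\}$. The bijection between the two index sets is the obvious one: a word $w$ is sent to the subset $S$ of positions (0-indexed from the left) at which $w$ has a $0$; the complementary positions carry $a$, and $|S|=k$ is the number of zeros. It therefore suffices to show that for each such $w$ with associated subset $\{j_1<\dots<j_k\}$,
$$X_w \;=\; \Im\bigl(s_{j_k}\cdots s_{j_1}:\nondeg X_{n-k}\to X_n\bigr).$$

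I would prove this by induction on $k$. The base case $k=0$ is immediate: then $w=a\cdots a$ and $X_w = \nondeg X_n$ by definition of the effective simplices, with no degeneracy applied. For the inductive step, I would peel off the leftmost zero. Write $w = v\,0\,v'$ with $|v|=j_1$, and invoke Proposition \ref{degen-w} to obtain
$$X_w \;=\; \Im\bigl(s_{j_1}:X_{vv'}\to X_n\bigr).$$
The concatenation $vv'$ has length $n-1$; its zero-positions are exactly $\{j_2-1,\dots,j_k-1\}$, since deleting position $j_1$ shifts all later positions down by one. The inductive hypothesis applied to $vv'$ then yields
$$X_{vv'} \;=\; \Im\bigl(s_{j_k-1}\cdots s_{j_2-1}:\nondeg X_{n-k}\to X_{n-1}\bigr),$$
and composing with $s_{j_1}$ gives $X_w = \Im\bigl(s_{j_1}\,s_{j_k-1}\cdots s_{j_2-1}\bigr)(\nondeg X_{n-k})$.

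To put this into the canonical form of the statement, I would use the simplicial identity $s_a s_b = s_{b+1} s_a$ for $a\le b$, commuting $s_{j_1}$ rightwards past each $s_{j_i-1}$ in turn. This is legal since $j_1<j_i$, hence $j_1\le j_i-1$, and each commutation replaces $s_{j_i-1}$ by $s_{j_i}$ while leaving $s_{j_1}$ in place. After $k-1$ such moves the composition becomes $s_{j_k}s_{j_{k-1}}\cdots s_{j_2}\,s_{j_1}$, which is precisely the claim.

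The main obstacle is purely notational: keeping the shifted indices straight after deleting the leftmost zero, and verifying that iterated use of the commutation identity produces exactly the ordered product $s_{j_k}\cdots s_{j_1}$. That the union on the right-hand side is an actual (disjoint) sum is not something to prove separately, since the $X_w$ are already summands of $X_n$ by Lemma \ref{lem:X1w=sum}, and we have merely rewritten each $X_w$ as an essential image of a composite degeneracy applied to $\nondeg X_{n-k}$.
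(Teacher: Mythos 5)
Your proof is correct and is essentially the paper's argument spelled out in detail: the paper's proof is the single line ``iterated use of Corollary~\ref{cor:X1w=sum}'', and that corollary is exactly the combination of Lemma~\ref{lem:X1w=sum} and Proposition~\ref{degen-w} that you invoke, with your induction on the number of zeros and the $s_is_j=s_{j+1}s_i$ bookkeeping making precise what the iteration and the resulting index normalisation actually are.
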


\begin{lemma}\label{lem:Segal:nondeg}
  If a complete decomposition space $X$ is a Segal space, then $\nondeg X_n
  \simeq \nondeg X_1 \times_{X_0} \cdots \times_{X_0} \nondeg X_1$, the
  $\infty$-groupoid of strings of $n$ composable nondegenerate arrows in $X_n
  \simeq X_1 \times_{X_0} \cdots \times_{X_0} X_1$.
\end{lemma}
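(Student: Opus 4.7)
\emph{Plan.} The strategy is to combine the Segal equivalence with the defining pullback \eqref{eq:Xw} for $X_{aa\cdots a} = \nondeg X_n$, and then rearrange iterated pullbacks.

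First I would observe that under the Segal equivalence
$$
X_n \;\simeq\; X_1 \times_{X_0} X_1 \times_{X_0} \cdots \times_{X_0} X_1
\qquad (n \text{ factors}),
$$
the canonical map $X_n \to (X_1)^n$ that sends a simplex to its principal edges is identified with the forgetful map from this iterated fibre product to the plain product $(X_1)^n$. This is because the Segal equivalence is realised precisely by the spine, i.e.\ the principal edges.

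Next, by definition $\nondeg X_n = X_{aa\cdots a}$, which by \eqref{eq:Xw} fits into the pullback square
$$
\xymatrix{
\nondeg X_n \drpullback \ar[r]\ar[d] & X_n \ar[d] \\
(X_a)^n \ar[r] & (X_1)^n.
}
$$
Substituting the Segal expression for $X_n$, this gives the equivalence
$$
\nondeg X_n \;\simeq\; (X_a)^n \times_{(X_1)^n} \bigl(X_1 \times_{X_0} \cdots \times_{X_0} X_1\bigr).
$$

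Finally, I would rearrange this iterated pullback. Since limits commute with limits, pulling back coordinate-wise along $(X_a)^n \to (X_1)^n$ replaces each factor $X_1$ in the Segal fibre product by $X_a \times_{X_1} X_1 \simeq X_a$ (the monomorphism $X_a \hookrightarrow X_1$ from completeness makes this trivial), while preserving the matching conditions over $X_0$. Concretely, this is a standard pasting argument using Lemma~\ref{pbk} on a grid of pullback squares; alternatively it can be done by induction on $n$, using the base case $\nondeg X_1 = X_a$ and, at the inductive step, the pullback of $(X_a)^{n+1} \to (X_1)^{n+1}$ against $X_{n+1} \simeq X_n \times_{X_0} X_1$. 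The result is
$$
\nondeg X_n \;\simeq\; X_a \times_{X_0} \cdots \times_{X_0} X_a \;=\; \nondeg X_1 \times_{X_0} \cdots \times_{X_0} \nondeg X_1,
$$
as required.

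The only potential obstacle is bookkeeping in the rearrangement of the iterated pullback; conceptually there is nothing to do beyond limits commuting with limits, but writing it out linearly is a little awkward. Using an inductive formulation, or drawing a single $2 \times n$ grid of pullback squares and invoking Lemma~\ref{pbk} columnwise, keeps the argument transparent.
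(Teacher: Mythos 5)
Your proposal is correct and follows essentially the same route as the paper, which simply asserts that the lemma ``follows immediately from \eqref{eq:Xw}'': you identify $\nondeg X_n = X_{a\cdots a}$ via the defining pullback over $(X_1)^n$, use the Segal equivalence to rewrite $X_n$, and commute the iterated pullbacks. Your write-up just makes explicit the bookkeeping the paper leaves to the reader.
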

\noindent
This follows immediately from the pullback square \eqref{eq:Xw}.
Note that if furthermore $X$ is Rezk complete, we can say non-invertible instead of 
nondegenerate.

\section{\M inversion in the convolution algebra}

In this section, we establish a M\"obius inversion 
principle at the objective level for arbitrary complete 
decomposition spaces.  (Later we shall impose the
finiteness conditions necessary for taking (homotopy) 
cardinality to obtain the M\"obius inversion principle also at 
the classical `numerical' level.)

\label{sec:Minv}

\begin{blanko}{Convolution.}
  In homotopy linear algebra~\cite{GKT:HLA}, $\infty$-categories
  $\Grpd_{/B}$ play the role of the vector spaces with basis $\pi_0 B$.
  Just as a linear functional is determined by its values on basis
  elements, linear functors $\Grpd_{/B} \to \Grpd$ correspond to arbitrary
  functors $B \to \Grpd$, hence the $\infty$-category $\Grpd^B$ can be
  considered the linear dual of the slice $\infty$-category $\Grpd_{/B}$
  (see~\cite{GKT:HLA} for the precise statements and proof).

If $X$ is a decomposition space, the coalgebra structure on $\Grpd_{/X_1}$
therefore induces an algebra structure on $\Grpd^{X_1}$.
  The convolution product of two linear functors
  $$
  F,G:\Grpd_{/ X_1}\longrightarrow \Grpd,
  $$
  given by spans $ X_1 \leftarrow M \to 1$ and $ X_1 \leftarrow N\to 1$,
  is the composite of their tensor product $F\otimes G$
  and the comultiplication,
  $$
  F*G: \quad \Grpd_{/ X_1}\stackrel{\Delta}\longrightarrow 
  \Grpd_{/ X_1}\otimes \Grpd_{/ X_1} \stackrel{F\tensor G}
  \longrightarrow \Grpd \tensor \Grpd
  \stackrel{\sim}\longrightarrow \Grpd.
  $$
  Thus the convolution product of $F$ and $G$ is given by the composite of spans
  $$
  \xymatrix@!C=9ex{
   X_1 && \\
   X_2 \ar[u] \ar[d]&\ar[l]\ar[d] M *N\ar[lu]\ar[rd]\dlpullback &
  \\
   X_1\times  X_1 &\ar[l]M\times N \ar[r] & 1.
  }$$
  The neutral element for convolution is $\varepsilon:\Grpd_{/X_1}\to\Grpd$ defined
  by the span
  $$
  X_1 \stackrel {s_0}\leftarrow  X_0 \to 1\,.
  $$
\end{blanko}

\begin{blanko}{The zeta functor.}\label{zeta}
  The {\em zeta functor}
  $$
  \zeta:\Grpd_{/ X_1} \to \Grpd
  $$
  is the linear functor defined by the span
  $$
  X_1 \stackrel =\leftarrow  X_1 \to 1\,.
  $$
  We will see later in the locally finite situation (see~\ref{finitary}) that
  on taking the homotopy cardinality of the zeta functor
  one obtains the constant function 1 on $\pi_0 X_1$, that is, the classical zeta
  function in the incidence algebra.
  
  It is clear from the definition of the convolution product that 
  the $k$th convolution power of the zeta functor is given by
  $$
  \zeta^k : \; 
  X_1 \stackrel g\leftarrow  X_k \to 1 ,
  $$
  where $g: [1] \to [k]$ is the unique active map in degree $k$.

Consider also the elements $\delta^a$ and $h^a$ of the incidence algebra given
by the spans
$$
\delta^a :\; X_1 \leftarrow (X_1)_{[a]} \to 1,
\qquad\quad
h^a : \; X_1 \stackrel{\name{a}}\leftarrow 1 \to 1
$$
where $(X_1)_{[a]}$ denotes the component of $X_1$ containing $a\in X_1$.
Then zeta is the sum of the elements $\delta^a$, or the homotopy sum of $h^a$ 
$$
\zeta \;\;\simeq \;\;\sum_{a\in \pi_0 X_1} \delta^a
\;\;\simeq \;\;\int^a h^a
.
$$
\end{blanko}

\begin{blanko}{The idea of \M inversion \`a la Leroux.}
  We are interested in the invertibility of the zeta functor under the
  convolution product.  Unfortunately, at the objective level it can practically
  {\em never} be convolution invertible, because the inverse $\mu$ should always
  be given by an alternating sum (cf.~Theorem~\ref{thm:zetaPhi})
  $$
  \mu = \Phieven - \Phiodd 
  $$
  (of the Phi functors defined below).
  We have no minus sign available, but 
  following the idea of Content--Lemay--Leroux~\cite{Content-Lemay-Leroux},
  developed further by Lawvere--Menni~\cite{LawvereMenniMR2720184},
  we establish the sign-free equations
  $$
  \zeta * \Phieven \simeq \varepsilon + \zeta * \Phiodd ,
  \qquad\qquad
  \Phieven * \zeta  \simeq \varepsilon + \Phiodd * \zeta.
  $$

  In the category case (cf.~\cite{Content-Lemay-Leroux}
  and \cite{LawvereMenniMR2720184}),
  $\Phieven$ (resp.~$\Phiodd$) is given by even-length (resp.~odd-length)
  chains of non-identity arrows.  (We keep the $\Phi$-notation in honour of 
  Content--Lemay--Leroux). In the general setting of decomposition spaces
  we cannot talk about chains of arrows,
  but in the complete case we can still talk about effective simplices
  and their principal edges.
\end{blanko}

From now on we assume again that $X$ is a complete decomposition space.

\begin{blanko}{`Phi' functors.}\label{Phi}
  We define $\Phi_n$ to be the linear functor given by the span
  $$
  X_1 \longleftarrow \nondeg{X}_n \longrightarrow  1,
  $$
 where $\nondeg{X}_n$ is the full sub-$\infty$-groupoid of $X_n$ spanned by the effective simplices, which are the same as the non-degenerate simplices since $X$ is a complete decomposition space, see \ref{effective} and Corollary \ref{effective=nondegen}.
  If $n=0$ then $\nondeg X_0=X_0$ by convention, and $\Phi_0$ is given by the
  span
  $$
  X_1 \longleftarrow X_0 \longrightarrow  1.
  $$
  That is, $\Phi_0$ is the linear functor $\varepsilon$.  Note that
  $\Phi_1=\zeta-\varepsilon$.  The minus sign makes sense here, since $X_0$
  (representing $\varepsilon$) is really a full sub-$\infty$-groupoid of $X_1$ (representing
  $\zeta$).
\end{blanko}

To compute convolution with $\Phi_n$, a key ingredient is the following
general lemma (with reference to the word notation of \ref{w}).

\begin{lemma}\label{lem:X1w}
  Let $X$ be a complete decomposition space. Then for any words
  $v,v'$ in the alphabet $\{0,1,a\}$, the square
  $$\xymatrix{
  X_{vv'} \ar[d]\ar[r] & X_2 \ar[d] \\
  X_v \times X_{v'} \ar[r]& X_1\times X_1
  }$$
  is a pullback.
\end{lemma}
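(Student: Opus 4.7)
The plan is to decompose the square by inserting the pair $(X_n,\; X_{|v|}\times X_{|v'|})$ as an intermediate column, yielding
$$
\xymatrix{
X_{vv'} \ar[d] \ar[r] & X_n \ar[d] \ar[r] & X_2 \ar[d]^{(d_2,d_0)} \\
X_v \times X_{v'} \ar[r] & X_{|v|} \times X_{|v'|} \ar[r] & X_1 \times X_1
}
$$
where $n = |v|+|v'|$, the top right-hand map is the generic face associated to the generic map $g : [2] \to [n]$ sending $(0,1,2)\mapsto(0,|v|,n)$, the middle vertical map is the product of the two outer face projections, and the bottom row sends each factor to its long edge. If both inner squares are pullbacks, then the outer rectangle, which is the claim, follows by Lemma~\ref{pbk}.

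For the right square I would apply the decomposition space axiom to an iterated polygonal cut of $[n]$: first use the chord $\{|v|,n\}$ to write $[n] = [|v|+1]\cup_{[1]}[|v'|]$, then use $\{0,|v|\}$ to write $[|v|+1] = [|v|]\cup_{[1]}[2]$. Each is a generic-free pushout in $\Delta$ (Lemma~\ref{genfreepushout}), with a free outer face meeting a generic long edge. Two applications of the decomposition space axiom then give
$$
X_n \;=\; X_{|v|} \times_{X_1} X_2 \times_{X_1} X_{|v'|} \;=\; (X_{|v|} \times X_{|v'|}) \times_{X_1\times X_1} X_2,
$$
which is precisely the statement that the right square is a pullback. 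It remains to observe that the resulting map $X_n \to X_2$ is the generic face for $g$ and that the resulting map $X_n \to X_{|v|}\times X_{|v'|}$ is the product of the two outer face projections.

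For the left square I would use that $X^{vv'} = X^v \times X^{v'}$ by the word notation~\ref{w}, so that the defining pullback~\eqref{eq:Xw} for $X_{vv'}$ expands to the diagram
$$
\xymatrix{
X_{vv'} \ar[d] \ar[r] & X_n \ar[d] \\
X_v \times X_{v'} \ar[d] \ar[r] & X_{|v|} \times X_{|v'|} \ar[d] \\
X^v \times X^{v'} \ar[r] & (X_1)^n.
}
$$
The bottom rectangle is a pullback, being the product of the defining diagrams~\eqref{eq:Xw} for $X_v$ and $X_{v'}$, while the outer rectangle is the defining pullback~\eqref{eq:Xw} for $X_{vv'}$. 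By Lemma~\ref{pbk} the top rectangle is then a pullback as well.

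The main obstacle is the first step: one must keep careful track of which face map is generic and which is free at each of the two successive cuts of $[n]$, and check that the two applications of the decomposition space axiom compose to give exactly the map $(d_2,d_0)$ on $X_2$ and the pair of outer face projections on $X_{|v|}\times X_{|v'|}$. The left square is essentially formal, and the conclusion then follows at once by the pasting lemma.
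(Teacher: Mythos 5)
Your proof is correct and essentially reproduces the paper's argument: the paper also pastes the claimed square from a left-hand piece obtained from the defining pullbacks \eqref{eq:Xw} via Lemma~\ref{pbk}, and a right-hand piece coming from the decomposition-space axiom applied to two polygonal cuts of $[\,|v|+|v'|\,]$. The only cosmetic difference is that the paper displays the right-hand piece as two separate axiom squares factored through $X_{1+|v'|}$, whereas you present it as a single rectangle justified by the same two applications of the axiom.
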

\begin{proof}
  Let $m=|v|$ and $n=|v'|$.
  The square is the outer rectangle in the top row of the diagram
  $$\xymatrix{
  X_{vv'} \ar[d]\ar[r] & X_{m+n} \ar[d] \drpullback \ar[r] 
  & X_{1+n} \ar[d] \drpullback \ar[r]& X_2 \ar[d] 
  \\
  X_v \times X_{v'}\drpullback \ar[d] \ar[r]& X_m\times X_n\ar[d]\ar[r]& X_1\times X_n 
  \ar[r]& X_1\times X_1 
  \\
  X^v \times X^{v'} \ar[r] & {X_1}^m\times{X_1}^n 
  }$$
  The left-hand outer rectangle is a pullback by definition of $X_{vv'}$, and
  the bottom square is a pullback by definition of $X_v$ and $X_{v'}$.  Hence
  the top-left square is a pullback.  But the other squares in the top row are
  pullbacks because $X$ is a decomposition space (compare the square $\xymatrix{*+[o][F-]{1}}$ of \cite[5.3]{GKT:DSIAMI-1}). 
\end{proof}

\begin{lemma}
  We have $$\Phi_n\;\; \simeq\;\; (\Phi_1)^n\;\; =\;\; (\zeta-\varepsilon)^n,$$ 
  the $n$th convolution product of $\Phi_1$ with itself.
\end{lemma}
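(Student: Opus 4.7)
The plan is to argue by induction on $n$, using Lemma~\ref{lem:X1w} as the key combinatorial input at each step, and to interpret the second equality as a purely formal restatement using the noted identity $\Phi_1 = \zeta - \epsilon$ from~\ref{Phi}.

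For the base case $n=1$ there is nothing to prove, since $\Phi_1$ is by definition the span $X_1 \leftarrow \nondeg X_1 \to 1$ with $\nondeg X_1 = X_a$. For the inductive step, suppose $\Phi_{n-1}$ is represented by the span $X_1 \leftarrow \nondeg X_{n-1} \to 1$, where $\nondeg X_{n-1} = X_{a\cdots a}$ (with $n-1$ letters). Unfolding the definition of convolution from the beginning of Section~\ref{sec:Minv}, the product $\Phi_{n-1} * \Phi_1$ is the composite of spans
$$
\xymatrix@!C=9ex{
 X_1 && \\
 X_2 \ar[u] \ar[d]&\ar[l]\ar[d] P \ar[lu]\ar[rd]\dlpullback &
\\
 X_1\times X_1 &\ar[l] \nondeg X_{n-1} \times \nondeg X_1 \ar[r] & 1,
}
$$
where $P = X_2 \times_{X_1\times X_1} (X_{a\cdots a} \times X_a)$.

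The heart of the argument is to identify $P$ with $\nondeg X_n$. This is exactly the content of Lemma~\ref{lem:X1w} applied to the words $v = a\cdots a$ (of length $n-1$) and $v' = a$: the lemma produces a pullback square whose pullback is $X_{v v'} = X_{a\cdots a a} = \nondeg X_n$. Hence $P \simeq \nondeg X_n$, and the composite span is precisely $X_1 \leftarrow \nondeg X_n \to 1$, which represents $\Phi_n$. This closes the induction and gives $\Phi_n = (\Phi_1)^n$.

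For the second equality, recall from~\ref{Phi} that $\Phi_1 = \zeta - \epsilon$, interpreted objectively via the splitting $X_1 = X_0 + X_a$ (which is valid by completeness, cf.~\ref{w}). Substituting into $(\Phi_1)^n$ yields $(\zeta - \epsilon)^n$, with the convention on signs explained in the discussion preceding~\ref{Phi}. The main technical point is really just the pullback identification in the induction step, and this is already handed to us by Lemma~\ref{lem:X1w}; I expect no genuine obstacle, only the bookkeeping needed to keep track of the words $v,v'$ of $a$'s as $n$ grows.
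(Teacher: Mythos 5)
Your proof is correct and follows essentially the same route as the paper, whose own proof is simply the remark that the claim ``follows from the definitions and Lemma~\ref{lem:X1w}''; your induction with $v=a\cdots a$ and $v'=a$ is exactly the intended unpacking of that remark, including the identification of the left leg of the composite span with the generic map $\nondeg X_n\to X_1$. No issues.
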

\begin{proof}
  This follows from the definitions and Lemma~\ref{lem:X1w}.
\end{proof}

\begin{prop}\label{prop:Phi_n}
The linear functors $\Phi_n$ satisfy
$$
\zeta*\Phi_n
\;\;\simeq\;\;
\Phi_n+\Phi_{n+1}
\;\;\simeq\;\;
\Phi_n*\zeta.
$$
\end{prop}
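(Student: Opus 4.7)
The plan is to compute both convolutions explicitly as spans using the word-notation machinery developed in Section~\ref{sec:complete}, and then to decompose the middle object of the span as a coproduct via Corollary~\ref{cor:X1w=sum}, matching each summand to a $\Phi$-functor.

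First I would address $\zeta * \Phi_n$. By definition of convolution the middle object is the pullback
$$X_2 \times_{X_1\times X_1} (X_1 \times \nondeg X_n),$$
where the map $\nondeg X_n \to X_1$ is the long-edge map. Applying Lemma~\ref{lem:X1w} with $v=1$ and $v'=a^n$ identifies this pullback with $X_{1a^n}$, and the left leg of the span with the long-edge map $X_{1a^n} \to X_{n+1} \to X_1$. So $\zeta*\Phi_n$ is represented by the span $X_1 \leftarrow X_{1a^n} \to 1$.

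Next I would invoke Corollary~\ref{cor:X1w=sum} with $v$ empty and $v'=a^n$ to split
$$X_{1a^n} \;=\; s_0(\nondeg X_n) \;+\; \nondeg X_{n+1},$$
and treat each summand separately. The second summand, together with its long-edge map, is by definition the span defining $\Phi_{n+1}$. For the first, completeness gives that $s_0$ is mono (Lemma~\ref{all-s-mono}), so $s_0(\nondeg X_n) \simeq \nondeg X_n$; and the simplicial identity $d_1 s_0 = \id$ (applied iteratively to the long-edge map $d_1^{\,n}$) shows the long edge of $s_0\sigma$ equals the long edge of $\sigma$, so this summand reproduces exactly the defining span of $\Phi_n$. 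Thus $\zeta * \Phi_n = \Phi_n + \Phi_{n+1}$.

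For $\Phi_n * \zeta$ the argument is symmetric. Lemma~\ref{lem:X1w} with $v=a^n$ and $v'=1$ identifies the convolution span with $X_1 \leftarrow X_{a^n 1} \to 1$, and Corollary~\ref{cor:X1w=sum} with $v=a^n$, $v'$ empty gives
$$X_{a^n 1} \;=\; s_n(\nondeg X_n) \;+\; \nondeg X_{n+1}.$$
Again $s_n$ is mono and commutes with the long-edge map (the appended last vertex duplicates an endpoint, so the source and target of the long edge are unchanged), so the first summand gives $\Phi_n$ and the second gives $\Phi_{n+1}$, yielding $\Phi_n * \zeta = \Phi_n + \Phi_{n+1}$.

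I do not expect any serious obstacle: the content is entirely packaged in Lemma~\ref{lem:X1w} and Corollary~\ref{cor:X1w=sum}, and the only care needed is to check that the long-edge leg is compatible with the identifications $s_0(\nondeg X_n) \simeq \nondeg X_n$ and $s_n(\nondeg X_n) \simeq \nondeg X_n$, which is a one-line application of the simplicial identities $d_1 s_0 = \id$ and $d_{n+1} s_n = \id$.
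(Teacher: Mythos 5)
Your proof is correct and is essentially the paper's own argument: both identify the convolution span with $X_{1a\cdots a}$ via Lemma~\ref{lem:X1w} and then split it as $\nondeg X_n + \nondeg X_{n+1}$ using Lemma~\ref{lem:X1w=sum} together with Proposition~\ref{degen-w}, checking the identification is over $X_1$. The only cosmetic slip is in the $\Phi_n*\zeta$ case: the long-edge leg is the generic map $g=d_1^{\,n}$, so the compatibility $g\circ s_n=g$ follows from $d_1 s_j=s_{j-1}d_1$ (pushing the degeneracy past inner faces) rather than from the outer-face identity $d_{n+1}s_n=\id$, but the conclusion you draw is right.
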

\begin{proof}
  We can compute the convolution $\zeta * \Phi_n$ by
  Lemma~\ref{lem:X1w} as
  $$
  \xymatrix@!C=9ex{
   X_1 && \\
   X_2 \ar[u] \ar[d]&\ar[l]\ar[d] X_{1a\cdots a}\ar[lu]\ar[rd]\dlpullback &
  \\
   X_1\times  X_1 &\ar[l]X_1\times \nondeg X_n \ar[r] & 1
  }$$
  But Lemma~\ref{lem:X1w=sum} tells us that
  $X_{1a\cdots a} = X_{0a\cdots a} + X_{aa\cdots a} = \nondeg X_n +
  \nondeg X_{n+1}$, where the identification in the first summand is
  via $s_0$, in virtue of Proposition~\ref{degen-w}.
  This is an equivalence of $\infty$-groupoids over $X_1$
  so the resulting span is $\Phi_n+\Phi_{n+1}$ as desired.
  The second identity claimed follows similarly.
\end{proof}

Put
$$
\Phieven := \sum_{n \text{ even}} \Phi_n , \qquad
\Phiodd := \sum_{n \text{ odd}} \Phi_n .
$$

\begin{theorem}\label{thm:zetaPhi}
  For a complete decomposition space, the following \M inversion 
  principle holds:
  \begin{align*}
\zeta * \Phieven
 &\;\;\simeq\;\; \varepsilon\;\; +\;\; \zeta * \Phiodd,\\
    \Phieven *\zeta &\;\;\simeq\;\; \varepsilon \;\;+ \;\; \Phiodd*\zeta.
\end{align*}
In fact, these four linear functors are all equivalent.
\end{theorem}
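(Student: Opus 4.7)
The plan is to deduce the theorem directly from the preceding proposition, which establishes the key telescoping identity $\zeta * \Phi_n = \Phi_n + \Phi_{n+1} = \Phi_n * \zeta$, together with the fact that $\Phi_0 = \epsilon$. No new geometric input about the decomposition space is needed; the argument reduces to a formal manipulation of homotopy sums of linear functors.

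First I would observe that convolution on $\Grpd^{X_1}$ is bilinear with respect to homotopy sums of spans, since it is computed as a composition of a fixed span (the comultiplication) with the tensor product of the input spans, and pullback and postcomposition both preserve sums in the extensive $\infty$-category $\Grpd$. Consequently, for the left-convolution identity, I compute
\begin{align*}
\zeta * \Phieven
\;&=\; \sum_{n \text{ even}} \zeta * \Phi_n
\;=\; \sum_{n \text{ even}} (\Phi_n + \Phi_{n+1})
\;=\; \sum_{k \geq 0} \Phi_k,
\\
\epsilon + \zeta * \Phiodd
\;&=\; \Phi_0 + \sum_{n \text{ odd}} \zeta * \Phi_n
\;=\; \Phi_0 + \sum_{n \text{ odd}} (\Phi_n + \Phi_{n+1})
\;=\; \sum_{k \geq 0} \Phi_k,
\end{align*}
where the middle equalities invoke the preceding proposition, and the last equalities reindex the two telescoping sums: the even-indexed sum enumerates the pairs $(\Phi_0,\Phi_1),(\Phi_2,\Phi_3),\dots$, while $\Phi_0$ together with the odd-indexed sum enumerates $\Phi_0$ together with $(\Phi_1,\Phi_2),(\Phi_3,\Phi_4),\dots$. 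Both reindexings cover each $\Phi_k$ exactly once, so the two spans are equivalent as $\infty$-groupoids over $X_1$. The right-convolution identity $\Phieven * \zeta = \epsilon + \Phiodd * \zeta$ follows by the identical argument, now using the equality $\Phi_n * \zeta = \Phi_n + \Phi_{n+1}$ from the preceding proposition.

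The main thing to be careful about is that this rearrangement is sign-free and involves no cancellation: we never need additive inverses in $\Grpd$, which is crucial since they do not exist. The two sides of each identity are literally the same homotopy sum $\sum_{k \geq 0} \Phi_k$ after reindexing, and this equivalence is witnessed concretely by a bijection of summands at the objective level. This is precisely the content advertised in the discussion preceding the theorem: the alternating sum formulation $\mu = \Phieven - \Phiodd$ is not available at the objective level, but is replaced by the balanced identity above, which will yield the classical formula only after passing to homotopy cardinality in the locally finite case treated later.
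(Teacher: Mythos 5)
Your proposal is correct and follows essentially the same route as the paper: the paper's proof likewise deduces the theorem immediately from the preceding proposition by observing that all four linear functors telescope to $\sum_{r\geq 0}\Phi_r$. Your version merely spells out the reindexing and the bilinearity of convolution over homotopy sums, which the paper leaves implicit.
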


\begin{proof}
  This follows immediately from Proposition~\ref{prop:Phi_n}:
  all four linear functors are equivalent to $\sum_{r\geq0}\Phi_r$.
\end{proof}

We note the following immediate corollary of 
Proposition~\ref{prop:cULF-nondegen}, which can be read as saying
`\M inversion is preserved by \culf functors':
\begin{cor}\label{phi=phi}
  If $f:Y \to X$ is \culf, then $f\upperstar \zeta \simeq \zeta$ and
  $f\upperstar \Phi_n \simeq \Phi_n$ for all $n\geq0$.
\end{cor}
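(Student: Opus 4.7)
The plan is to unwind the definitions of the pullback functor $f\upperstar$ on linear functors and observe that the two claimed equalities reduce to identifying certain spans; the equality for $\zeta$ is immediate, and the equality for $\Phi_n$ is a direct application of Proposition~\ref{prop:cULF-nondegen}.

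First, for the zeta functor. Recall from~\ref{zeta} that $\zeta$ on $X$ is represented by the span $X_1 \stackrel=\leftarrow X_1 \to 1$, and that $f\upperstar$ on linear functors corresponds, on the span side, to pulling back the left leg along $f_1:Y_1 \to X_1$. Pulling back the identity span along $f_1$ yields the identity span on $Y_1$, i.e.\ the span $Y_1 \stackrel=\leftarrow Y_1 \to 1$, which represents the zeta functor on $Y$. This identification does not even require the cULF hypothesis.

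Next, for the Phi functors. The functor $\Phi_n$ on $X$ is represented by the span $X_1 \leftarrow \nondeg X_n \to 1$, so $f\upperstar \Phi_n$ is given by the span $Y_1 \leftarrow Y_1 \times_{X_1} \nondeg X_n \to 1$. It therefore suffices to exhibit an equivalence $Y_1 \times_{X_1} \nondeg X_n \simeq \nondeg Y_n$ over $Y_1$. Now the key observation is that $\nondeg X_n = X_{a\cdots a}$ and $\nondeg Y_n = Y_{a\cdots a}$ by definition (\ref{effective}); applying Proposition~\ref{prop:cULF-nondegen} to the word $w = a\cdots a$ of length $n$ yields exactly the pullback square
$$
\xymatrix{
Y_1 \ar[d] & \nondeg Y_n \ar[l] \ar[d] \\
X_1 & \nondeg X_n \ar[l]
}
$$
which is precisely the desired identification.

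There is no real obstacle here: the corollary is essentially a matter of matching up the combinatorics of the word notation with the definition of the Phi functors, and Proposition~\ref{prop:cULF-nondegen} has already done all the work. The only point worth checking carefully is that the map $\nondeg X_n \to X_1$ appearing in the definition of $\Phi_n$ (namely the long-edge map, the generic face map corresponding to $[1]\to[n]$) is indeed the same as the left-hand vertical map in the pullback square of Proposition~\ref{prop:cULF-nondegen}; this is immediate from the construction in that proposition, where the relevant square is built by composing with the pullback $Y_n \to X_n$ over $Y_1 \to X_1$ given by the generic long-edge map.
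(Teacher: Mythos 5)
Your proof is correct and takes exactly the paper's route: the paper offers no written proof beyond calling the statement an immediate corollary of Proposition~\ref{prop:cULF-nondegen}, and your argument is precisely the unwinding of that, applying the proposition to the word $w=a\cdots a$ to identify $Y_1\times_{X_1}\nondeg X_n$ with $\nondeg Y_n$ over $Y_1$ (with the $\zeta$ case being trivial span pullback). The one point left implicit in both your write-up and the paper is that $Y$ is itself a complete decomposition space so that $\nondeg Y_n$ and $\Phi_n$ make sense on $Y$; this follows since $f$ is cULF (cf.\ the argument in Proposition~\ref{prop:cULF/FILT=FILT}).
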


\section{Stiff simplicial spaces}

\label{sec:stiff}

We saw that in a complete decomposition space, degeneracy can be detected on
principal edges.  In Section~\ref{sec:split} we shall come to
split simplicial spaces, which share this property.  A common generalisation is
that of stiff complete simplicial spaces, which we now introduce.

\begin{blanko}{Stiffness.}\label{stiff}
  A simplicial space $X: \simplexcategory\op\to\Grpd$
  is called {\em stiff} if it sends codegeneracy/inert 
  pushouts in $\simplexcategory$ to pullbacks in $\Grpd$.  These pushouts are
  examples of active-inert pushouts, so in particular every
  decomposition space is stiff.
\end{blanko}

\begin{lemma}\label{lem:stiff}
  A simplicial space $X$  is stiff if and only if the following diagrams
  are pullbacks for all $0\leq i\leq n$.
      $$\xymatrix{
	  X_{n} \drpullback 
	  \ar[r]^{s_{i}}\ar[d]&\ar[d]^{{d_\bot\!}^{i}\,{d_\top\!}^{n-i}} X_{n+1}
	  \\ 
	  X_0 \ar[r]_{s_0} & X_1
      }$$
\end{lemma}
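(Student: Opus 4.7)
My plan is to deduce both directions from the structure of degeneracy/free pushouts in $\Delta$ and the pullback pasting lemma (\ref{pbk}).

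For the forward direction, I identify the displayed square, for each $n$ and $0\le i\le n$, as the image under $X$ of the pushout in $\Delta$
$$
\xymatrix{
[1] \ar[r]^-{[i,i+1]} \ar[d]_{\sigma_0} & [n+1] \ar[d]^{\sigma_i} \\
[0] \ar[r] & [n]
}
$$
where $\sigma_0 : [1]\to[0]$ is the unique codegeneracy (a degeneracy), $[i,i+1]$ is the free map picking the edge from vertex $i$ to $i+1$, the right-hand arrow is the codegeneracy $\sigma_i$ collapsing this edge, and the bottom arrow is the vertex-$i$ inclusion $[0]\hookrightarrow[n]$. Since this is a degeneracy/free pushout in $\Delta$, stiffness of $X$ immediately gives that its image under $X$ is a pullback.

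For the converse, my first step is to classify degeneracy/free pushouts in $\Delta$. I would check that a span $[q-1]\xleftarrow{\sigma_i}[q]\xrightarrow{f}[r]$ with $f$ free of image $[a,a+q]\subset[r]$ has pushout uniformly given by
$$
\xymatrix{
[q] \ar[r]^f \ar[d]_{\sigma_i} & [r] \ar[d]^{\sigma_{a+i}} \\
[q-1] \ar[r]_{f'} & [r-1]
}
$$
with $f'$ the free map of image $[a,a+q-1]$; no case distinction is needed, since $f$ is injective and the edge $(i,i+1)$ of $[q]$ is carried to a genuine edge $(a+i,a+i+1)$ of $[r]$ which is what $\sigma_{a+i}$ collapses.

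With this in hand, to show that $X$ sends such a pushout to a pullback, I would stack edge-pullback squares vertically:
$$
\xymatrix{
X_{r-1} \ar[r]^{s_{a+i}} \ar[d]_{(f')^*} & X_r \ar[d]^{f^*} \\
X_{q-1} \ar[r]^{s_i} \ar[d] & X_q \ar[d]^{d_\bot^i d_\top^{q-1-i}} \\
X_0 \ar[r]_{s_0} & X_1
}
$$
The bottom square is the hypothesized pullback square for parameters $(q-1,i)$. Because $f$ carries the edge $(i,i+1)$ to $(a+i,a+i+1)$ and the vertex $i$ to $a+i$, the composite right-hand column of the outer rectangle reduces to $d_\bot^{a+i}d_\top^{r-1-a-i}$ and the composite left-hand column is the vertex-$(a+i)$ selection map. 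Thus the outer rectangle is precisely the hypothesized pullback square for $(r-1,a+i)$, and Lemma~\ref{pbk} then gives that the top square is a pullback, exactly as required.

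The main obstacle is the combinatorial bookkeeping: pinning down the general form of the degeneracy/free pushout, and verifying that the face-and-degeneracy map compositions in the outer rectangle genuinely coincide with those of the $(r-1,a+i)$ edge-square. Once these identifications are carefully carried out, the rest of the argument collapses to a single application of pullback pasting.
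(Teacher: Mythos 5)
Your proof is correct and follows essentially the same route as the paper's: the forward direction identifies the displayed squares as images of particular degeneracy/free pushouts, and the converse sandwiches a general degeneracy/free square between two squares of the lemma (as inner square and outer rectangle) and applies Lemma~\ref{pbk}. Your version simply makes explicit the classification of these pushouts and the index bookkeeping that the paper leaves implicit.
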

\begin{proof}
  The squares in the lemma are special cases of the degeneracy/inert
  squares.  On the other hand, every degeneracy/inert square sits in
  between two of the squares of the lemma in such a way that
  Lemma~\ref{pbk} forces it to be a pullback too.
\end{proof}

The following two lemmas for stiff simplicial spaces are proved in the 
same way as for decomposition
spaces~\cite[Lemmas 3.10 and 3.9 respectively]{GKT:DSIAMI-1}.
\begin{lemma}\label{bonus-pullbacks}
  (`Bonus pullbacks') Let $X$ be a stiff simplicial space.
  For all $n\geq 3$ and all $0<i<j<n$, the following squares of active face and 
  degeneracy maps are pullbacks.
  $$
  \vcenter{\xymatrix{
     X_{n-3} \drpullback \ar[r]^-{s_{i-1}}\ar[d]_{s_{j-2}} & X_{n-2} \ar[d]^{s_{j-1}} \\
     X_{n-2} \ar[r]_-{s_{i-1}} & X_{n-1} 
  }}
  \qquad
  \qquad
  \vcenter{
  \xymatrix{
     X_{n-1} \drpullback \ar[r]^{d_i}\ar[d]_{s_j} & X_{n-2} \ar[d]^{s_{j-1}} \\
     X_{n} \ar[r]_{d_i} & X_{n-1} 
  }}  
  \qquad
  \qquad
  \vcenter{\xymatrix{
     X_{n-1} \drpullback \ar[r]^{d_{j-1}}\ar[d]_{s_{i-1}} & X_{n-2} \ar[d]^{s_{i-1}} \\
     X_{n} \ar[r]_{d_j} & X_{n-1} .
  }}$$
\end{lemma}

\begin{lemma}\label{lem:stiffs0}
  In a stiff simplicial space $X$, every degeneracy map is
  a pullback of $s_0: X_0 \to X_1$.  In particular, if just $s_0:X_0 \to X_1$
  is mono then all degeneracy maps are mono.
\end{lemma}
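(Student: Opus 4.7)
The plan is to read the lemma essentially off from Lemma~\ref{lem:stiff}, then invoke stability of monomorphisms under pullback.

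First, I would recall that by Lemma~\ref{lem:stiff}, stiffness is equivalent to the existence of pullback squares
$$\xymatrix{
X_{n} \drpullback \ar[r]^{s_{i}}\ar[d] & X_{n+1} \ar[d]^{{d_\bot\!}^{i}\,{d_\top\!}^{n-i}}
\\
X_0 \ar[r]_{s_0} & X_1
}$$
for all $0\leq i \leq n$. This is literally the assertion that every degeneracy map $s_i$ arises as a pullback of $s_0:X_0\to X_1$ along the composite outer face map ${d_\bot\!}^{i}\,{d_\top\!}^{n-i} : X_{n+1}\to X_1$, which proves the first claim.

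For the second claim, I would use the standard fact that monomorphisms of $\infty$-groupoids are stable under pullback (this is immediate from the definition in~\ref{def:mono} via fibres, since a pullback of a map with $(-1)$-truncated fibres still has $(-1)$-truncated fibres). Hence if $s_0:X_0\to X_1$ is a monomorphism, so is every $s_i:X_n\to X_{n+1}$.

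There is really no obstacle here: the content of the lemma is entirely packaged in Lemma~\ref{lem:stiff}, and the argument amounts to combining that characterisation with pullback-stability of the mono condition. The only thing to be slightly careful about is that the squares exhibited by Lemma~\ref{lem:stiff} are exactly of the form required (pullbacks with $s_0$ on the bottom edge and $s_i$ on the top), which is indeed the case.
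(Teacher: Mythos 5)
Your proof is correct and matches the intended argument: the paper states this lemma without an explicit proof (referring to the analogous result for decomposition spaces in Part~1), but the pullback squares of Lemma~\ref{lem:stiff} exhibit each $s_i$ as a pullback of $s_0$ exactly as you say, and the second claim then follows from pullback-stability of monomorphisms.
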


\begin{cor}\label{lem:conss0}
  A simplicial map $f:Y \to X$ between stiff simplicial spaces is
  conservative if and only if
  the naturality square for $s_0$ is a pullback:
  $$\xymatrix{
     Y_0 \drpullback \ar[r]^{s_0}\ar[d] & Y_1 \ar[d] \\
     X_0 \ar[r]_{s_0} & X_1 .
  }$$
\end{cor}

\begin{cor}
  A stiff simplicial space $X$ is complete if and only if the canonical map from
  the constant simplicial space $X_0$ is conservative.
\end{cor}
\begin{proof}
  This follows from the previous two lemmas and a standard pullback argument,
  exploiting the pullback characterisation of completeness~\ref{completeexact}.
\end{proof}

For complete simplicial spaces, stiffness can be characterised
 in terms of degeneracy:

\begin{prop}\label{halfdecomp}
The following are equivalent for a complete simplicial space $X$.
\begin{enumerate}
\item  $X$ is stiff.
\item  Outer face maps $d_\bot,d_\top:X_{n}\to X_{n-1}$ preserve nondegenerate simplices. 
\item  Any nondegenerate simplex is effective. More precisely,
  $$
  \nondeg{X}_n = X_n\setminus {\textstyle\bigcup_{i=0}^n}\Im(s_{i-1}).
  $$
\item  If the $i$th principal edge of $\sigma\in X_n$ is degenerate, 
then $\sigma=s_{i-1}d_{i-1}\sigma  =s_{i-1}d_{i}\sigma$, that is
  $$X_{1\dots101\dots1} = \Im(s_{i-1}:X_{n-1}\to X_n)$$
\item For each word $w\in\{0,a\}^n$ we have
  $$X_w = \Im(s_{j_k-1}\dots s_{j_1-1}:\nondeg X_{n-k}\to X_n) .
  $$
  where $\{j_1<\dots<j_k\}=\{j:w_j=0\}$.
\end{enumerate}
\end{prop}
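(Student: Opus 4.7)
The plan is to establish the cycle $(1)\Leftrightarrow(4)\Rightarrow(5)\Rightarrow(3)\Leftrightarrow(2)$ and close it with $(3)\Rightarrow(4)$.

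The equivalence $(1)\Leftrightarrow(4)$ is a direct consequence of Lemma~\ref{lem:stiff}. By the defining pullback \eqref{eq:Xw}, the subspace $X_{1\cdots101\cdots1}$ with $0$ in position $i$ is precisely the pullback of the $i$-th principal edge projection $d_\bot^{i-1}d_\top^{n-i}:X_n\to X_1$ along $s_0:X_0\to X_1$. Stiffness means that this pullback equals $X_{n-1}$ via $s_{i-1}$, and since $s_{i-1}$ is mono by the completeness assumption, this identifies $X_{1\cdots101\cdots1}$ with the essential image $\Im(s_{i-1})$, yielding $(4)$; the converse implication is immediate from the same diagram. For $(4)\Rightarrow(5)$ I would induct on the number $k$ of zeros in the word $w$: pulling off the largest index $j_k$ via $(4)$ writes each $\sigma\in X_w$ as $s_{j_k-1}\tau$ with $\tau$ lying in a length $n-1$ word $w'$ containing the remaining zeros at positions $j_1,\dots,j_{k-1}$, and induction supplies the factorisation $\tau=s_{j_{k-1}-1}\cdots s_{j_1-1}\rho$ with $\rho\in\nondeg X_{n-k}$.

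The implication $(5)\Rightarrow(3)$ is immediate from Lemma~\ref{lem:X1w=sum}: in the sum decomposition $X_n=\sum_{w\in\{0,a\}^n}X_w$ the summand with $w=a\cdots a$ is $\nondeg X_n$, while $(5)$ places every other summand inside some $\Im(s_{i-1})$, giving the complement formula. For $(3)\Leftrightarrow(2)$ observe that ``effective $\Rightarrow$ nondegenerate'' is automatic in any complete simplicial space, since $s_i\tau$ has a degenerate principal edge at position $i+1$; both conditions then reduce to ``nondegenerate $\Rightarrow$ effective''. Under $(2)$, iterating outer face maps shows every principal edge of a nondegenerate simplex is nondegenerate. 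Under $(3)$, the principal edges of $d_\bot\sigma$ and $d_\top\sigma$ are sub-lists of the principal edges of $\sigma$, so if $\sigma$ is nondegenerate $=$ effective then so are $d_\bot\sigma$ and $d_\top\sigma$, and hence they are nondegenerate by $(3)$ again.

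The main obstacle is $(3)\Rightarrow(4)$, which closes the cycle and proceeds by induction on $n$. Given $\sigma\in X_n$ with degenerate $i$-th principal edge, $(3)$ forces $\sigma\simeq s_j\tau$ for some $j$ and some $\tau\in X_{n-1}$. If $j=i-1$ we are done. Otherwise, a direct computation from the simplicial identities describing $d_k s_j$ shows that $\tau$ itself has a degenerate principal edge, at position $i-1$ when $j\leq i-2$ and at position $i$ when $j\geq i$; by induction $\tau\simeq s_{i-2}\rho$ or $\tau\simeq s_{i-1}\rho$ respectively. The simplicial identities $s_j s_{i-2}=s_{i-1}s_j$ (for $j\leq i-2$) and $s_j s_{i-1}=s_{i-1}s_{j-1}$ (for $j\geq i$) then commute $s_{i-1}$ into the outermost position, exhibiting $\sigma\in\Im(s_{i-1})$. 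The careful index bookkeeping in this inductive step, together with the base case $n=1$ (where $(3)$ gives $X_0=\Im(s_0)\subset X_1$ and $(4)$ follows trivially), is the main technical difficulty.
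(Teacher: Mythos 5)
Your proposal is correct and follows essentially the same route as the paper's proof: the two substantive steps --- identifying the stiffness pullback square of Lemma~\ref{lem:stiff} with $X_{1\cdots 101\cdots 1}$ for $(1)\Leftrightarrow(4)$, and the induction on $n$ with commutation of degeneracy maps for $(3)\Rightarrow(4)$ --- coincide with the paper's arguments, as does the use of Lemma~\ref{lem:X1w=sum} for the sum splitting. The only (harmless) difference is in how the cycle is closed: the paper proves $(1)\Rightarrow(2)$ directly via the pullback squares of Lemma~\ref{bonus-pullbacks}, whereas you derive $(2)$ from $(3)$ by the elementary observation that the principal edges of an outer face form a sublist of those of the original simplex.
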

\begin{proof}
  $(1)\Rightarrow(2)$: Suppose $\sigma \in X_n$ and that $d_\top \sigma$
is degenerate.  Then $d_\top \sigma$ is in the image
of some $s_i : X_{n-2} \to X_{n-1}$, and hence by
(1) already $\sigma$ is in the image of $s_i : X_{n-1} \to X_n$. 
  
$(2)\Rightarrow(3)$: 
The principal edges of a simplex are obtained by applying outer face maps, 
so nondegenerate simplices are also effective.  For the more precise statement,
just note that both subspaces are full, so are determined by the properties
characterising their objects.

$(3)\Rightarrow(4)$: 
As $\sigma$ is not effective, we have 
$\sigma=s_j\tau$. If $j>i-1$ then the $i$th principal edge of $\sigma$ 
is also that of $\tau$, so by induction $\tau\in\Im(s_{i-1})$. Therefore 
$\sigma\in\Im(s_{i-1})$ also, and 
$\sigma=s_{i-1}d_{i-1}\sigma  =s_{i-1}d_{i}\sigma$ as required. 
If $j< i-1$ the argument is similar.   

$(4)\Leftrightarrow(1)$: 
To show that $X$ is stiff, by Lemma~\ref{lem:stiff} it 
is enough to check that this is a pullback:
      $$\xymatrix{
	  X_{n} \drpullback 
	  \ar[r]^{s_{i}}\ar[d]&\ar[d]^{{d_\bot\!}^{i}\,{d_\top\!}^{n-i}} X_{n+1}
	  \\ 
	  X_0 \ar[r]_{s_0} & X_1
      }$$
But the pullback is by definition $X_{1\cdots 101\cdots 1} \subset X_{n+1}$,
and by assumption this is canonically identified with the image of $s_i : X_n 
\to X_{n+1}$, establishing the required pullback.

$(4)\Leftrightarrow(5)$: This is clear, using Lemma~\ref{lem:X1w=sum}.
\end{proof}

In summary, an important feature of complete stiff simplicial spaces
is that all information about degeneracy is encoded in the principal edges.  We
exploit this to characterise conservative maps between complete stiff simplicial
spaces:
\begin{prop}\label{stiff-cons}
  For $X$ and $Y$ complete stiff simplicial spaces, and $f:Y \to X$ a
  simplicial map, the following are equivalent.
  \begin{enumerate}
    \item $f$ is conservative.  
    \item $f$ preserves the word splitting,
    i.e.~for every word $w\in \{0,a\}\upperstar$, $f$
  sends $Y_w$ to $X_w$.  
    \item $f_1$ maps $Y_a$ to $X_a$.
  \end{enumerate}
\end{prop}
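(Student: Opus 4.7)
My plan is to establish the cycle $(1)\Rightarrow(2)\Rightarrow(3)\Rightarrow(1)$, where the first two implications are essentially immediate from results already in the section, and the substance lies in $(3)\Rightarrow(1)$.

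For $(1)\Rightarrow(2)$, I would directly invoke Corollary~\ref{lem:cons-wn}: conservatism of $f$ implies that for every word $w\in\{0,1,a\}\upperstar$, and in particular every $w\in\{0,a\}\upperstar$, the square
$$\xymatrix{
Y_n\ar[d] & \ar[l] Y_w \dlpullback \ar[d] \\
X_n & \ar[l]  X_w
}$$
is a pullback, so $f$ restricts to a map $Y_w\to X_w$. The implication $(2)\Rightarrow(3)$ is just the instance $w=a$ (a length-one word in the alphabet $\{0,a\}$), for which $Y_w=Y_a$ and $X_w=X_a$ by the word notation in \ref{w}.

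The main implication is $(3)\Rightarrow(1)$. By Lemma~\ref{lem:conss0}, to check that $f$ is conservative it suffices to verify that the square
$$\xymatrix{
Y_0 \drpullback \ar[r]^{s_0} \ar[d]_{f_0} & Y_1 \ar[d]^{f_1} \\
X_0 \ar[r]_{s_0} & X_1
}$$
is a pullback. Since $X$ and $Y$ are both stiff and complete, the previous lemma ensures $s_0$ is a monomorphism in both, and completeness provides the sum decompositions $Y_1 = Y_0+Y_a$ and $X_1 = X_0+X_a$ in which $s_0$ is the canonical inclusion of the first summand. The simplicial identity $f_1\circ s_0 = s_0\circ f_0$ says that $f_1$ sends the $Y_0$-summand into the $X_0$-summand, while hypothesis $(3)$ says that $f_1$ sends the $Y_a$-summand into the $X_a$-summand. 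Thus $f_1$ is compatible with the sum decomposition on both sides.

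The final step is to observe that extensivity of the $\infty$-category $\Grpd$ then forces the displayed square to be a pullback: a map $A_0+A_1\to B_0+B_1$ that respects summands has the property that the pullback of either inclusion of a $B$-summand along it recovers the corresponding $A$-summand. Applying this to $f_1$ and the inclusion $X_0\hookrightarrow X_1$ gives exactly the required pullback. The only subtlety worth flagging is the appeal to extensivity in the $\infty$-categorical setting; this is the same principle already used in the proofs of Lemma~\ref{lem:cons-X1} and Lemma~\ref{lem:X1w=sum}, so no new machinery is needed.
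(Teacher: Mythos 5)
Your proposal is correct and follows essentially the same route as the paper: $(1)\Rightarrow(2)$ via Corollary~\ref{lem:cons-wn}, $(2)\Rightarrow(3)$ as the trivial instance $w=a$, and $(3)\Rightarrow(1)$ by reducing via Lemma~\ref{lem:conss0} to the $s_0$-square and checking it is a pullback using the sum decompositions $Y_1=Y_0+Y_a$, $X_1=X_0+X_a$ and extensivity. The only cosmetic difference is that you spell out the extensivity step, which the paper leaves as ``clearly a pullback''; note also that $s_0$ being a monomorphism is already part of the definition of completeness for simplicial spaces, so no appeal to stiffness is needed for that point.
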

\begin{proof}
  We already saw (Corollary~\ref{cor:cons-wn}) that conservative maps preserve the word 
  splitting (independently of $X$ and $Y$ being stiff), which proves 
  $(1)\Rightarrow(2)$.  The implication $(2)\Rightarrow(3)$ is trivial.
  Finally assume that $f_1$ maps $Y_a$ to $X_a$.
  To check that $f$ is conservative, it is enough (by Corollary~\ref{lem:conss0}) 
  to check that the square
    $$\xymatrix{
     Y_0 \drpullback \ar[r]^{s_0}\ar[d] & Y_1 \ar[d] \\
     X_0 \ar[r]_{s_0} & X_1  
  }$$
  is a pullback.  But since $X$ and $Y$ are complete, this square is just
  $$\xymatrix{
     Y_0 \drpullback \ar[r]^-{s_0}\ar[d] & Y_0 + Y_a \ar[d] \\
     X_0 \ar[r]_-{s_0} & X_0 + X_a  ,
  }$$
  which is clearly a pullback when $f_1$ maps $Y_a$ to $X_a$.
\end{proof}

This proposition can be stated more formally as follows.
For $X$ and $Y$ stiff complete simplicial spaces,
the space of conservative maps $\operatorname{Cons}(Y,X)$ is given
as the pullback
$$\xymatrix@C+2pc{
   \operatorname{Cons}(Y,X) \drpullback \ar[r]\ar@{ >->}[d] & {\displaystyle \prod_{n\in \N} \;
   \prod_{w\in \{0,a\}^n}} \Map(Y_w,X_w) \ar@{ >->}[d] \\
   \operatorname{Nat}(Y,X) \ar[r] & {\displaystyle \prod_{n\in \N}} \;\Map(Y_n,X_n) .
}$$
The vertical arrow on the right is given as follows. We have
$$
\Map(Y_n,X_n)\; \simeq\; \Map(\!\underset{w\in\{0,a\}^n}{\textstyle\sum}\!\! Y_w,\! \underset{v\in\{0,a\}^n}{\textstyle\sum}\!\! X_v )
\;\simeq \!\!\!\prod_{w\in \{0,a\}^n} \!\!\!\Map(Y_w,\!\underset{v\in\{0,a\}^n}{\textstyle\sum} \!\!X_v ).
$$
For fixed $w\in 
\{0,a\}^n$, the space $\Map(Y_w,\sum_{v\in\{0,a\}^n} X_v )$
has a distinguished subobject, namely consisting of those maps that
map into $X_w$ for that same word $w$.

\section{Split decomposition spaces}

\label{sec:split}

In this section, we digress to introduce split 
decomposition spaces, more general than the decomposition 
spaces of locally finite length of the following section.
The interest in this notion is its relation to Kan 
extension of semi-simplicial spaces 
(Theorem~\ref{thm:semisimpl=splitcons}).

\begin{blanko}{Split simplicial spaces.}
  In a complete simplicial space $X$, by definition all
  degeneracy maps are monomorphisms, so in particular it makes sense to
  talk about nondegenerate simplices in degree $n$: these form the full
  sub-$\infty$-groupoid of $X_n$ given as the complement of the degeneracy maps $s_i :
  X_{n-1} \to X_n$.  A simplicial space is \emph{split} if it
  is complete and the face maps preserve nondegenerate simplices.
\end{blanko}

\begin{blanko}{Example (continued).}\label{ex:graphssplit}
  The decomposition space $\GG$ of finite graphs (\ref{ex:graphsdecomp}) is
  split.  Indeed, the face maps join adjacent layers or project away the bottom
  or top layer.  To be nondegenerate means having no empty layers, and this
  property is clearly preserved by the face maps. 
\end{blanko}

By Proposition~\ref{halfdecomp}, a split simplicial space is stiff, 
so the results from the previous section are available for 
split simplicial spaces.  In particular, nondegeneracy can be measured on 
principal edges, and we have
\begin{cor}\label{prop:word=degen}
  If $X$ is a split simplicial space, then the sum splitting
  $$
  X_n\;\; = \sum_{w\in\{0,a\}^n} X_w
  $$
  is realised by the degeneracy maps.
\end{cor}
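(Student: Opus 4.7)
My plan is to obtain this as an almost immediate consequence of the structural results already established for stiff simplicial spaces, once we observe (as noted in the text just before the corollary) that any split simplicial space is stiff. So I can bring to bear both Lemma~\ref{lem:X1w=sum} and Proposition~\ref{halfdecomp}.

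First I would invoke Lemma~\ref{lem:X1w=sum}, which applies because $X$ is complete and gives the sum decomposition
$$
X_n \;=\; \sum_{w\in\{0,a\}^n} X_w.
$$
Next, since $X$ is split (hence stiff), I would apply the equivalence $(1)\Leftrightarrow(5)$ of Proposition~\ref{halfdecomp}: for each word $w\in\{0,a\}^n$ with zero-positions $\{j_1<\cdots<j_k\}$,
$$
X_w \;=\; \Im\bigl(s_{j_k-1}\cdots s_{j_1-1}: \nondeg X_{n-k}\to X_n\bigr).
$$
This is exactly the statement that the inclusion of the summand $X_w$ into $X_n$ is realised by the iterated degeneracy $s_{j_k-1}\cdots s_{j_1-1}$ restricted to the subspace of effective $(n-k)$-simplices.

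To close the argument it remains only to note that these degeneracy composites are genuine monomorphisms, so that the essential images are full and identify $\nondeg X_{n-k}$ itself (not merely a quotient) with $X_w$. This follows from the lemma after \ref{newbonuspullbacks}, which tells us that in a stiff simplicial space every degeneracy is a pullback of $s_0:X_0\to X_1$; since $X$ is complete, $s_0$ is mono, hence every $s_i$ is mono, and composites of monomorphisms are monomorphisms. There is no real obstacle here: the corollary is essentially a repackaging of Proposition~\ref{halfdecomp}(5) combined with Lemma~\ref{lem:X1w=sum}, highlighting that in the split case the sum decomposition of $X_n$ coincides with the stratification by degeneracy type coming from the effective strata in lower dimension.
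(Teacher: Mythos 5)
Your argument is correct and is exactly the derivation the paper intends: the corollary is stated without separate proof precisely because it is the combination of the sum splitting of Lemma~\ref{lem:X1w=sum} with condition (5) of Proposition~\ref{halfdecomp} (applicable since split implies stiff), plus the observation that the iterated degeneracies are monomorphisms so that each summand $X_w$ is genuinely identified with $\nondeg X_{n-k}$. The only minor redundancy is your appeal to the stiffness lemma for monicity of degeneracies: for a general complete simplicial space this is already part of the definition of completeness (\ref{completesimplicial}), so no extra lemma is needed there.
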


\begin{blanko}{Non-example.}\label{ex:rsi}
  The strict nerve of any category with a nontrivial section-retraction pair of
  arrows, $r \circ s = \id$, constitutes an example of a complete decomposition
  space which is not split. Indeed, the nondegenerate simplices are the
  chains of composable non-identity arrows, but we have $d_1(s,r)=\id$.
  
  In this way, splitness can be seen as an abstraction of the condition on a
  $1$-category that its identity arrows be indecomposable. We proceed to 
  formalise this, cf.~Corollary~\ref{cor:split=IU} below.
\end{blanko}

\begin{blanko}{Indecomposable units.}\label{IU}
  A simplicial space $X: \simplexcategory\op\to\Grpd$ is said to have 
  {\em indecomposable units} when the following squares are pullbacks for all
  $0 \leq i \leq n$:
  $$\xymatrix{
     X_n \drpullback \ar[r]^-{=}\ar[d]_{s_i s_i} & X_n \ar[d]^{s_i} \\
     X_{n+2} \ar[r]_{d_{i+1}} & X_{n+1} .
  }$$
  
  We note that having indecomposable units is an exactness condition:
  in $\simplexcategory$, the squares
  $$\xymatrix{
     [n] \drpullback & \ar[l]_= [n] \\
     [n{+}2] \ar[u]^{s^i s^i} & [n{+}1] \ar[u]_{s^i} \ar[l]^{d^{i+1}}
  }$$
  are pushouts, and the condition stipulates that they be sent to pullbacks.  
  
  The first instance of the indecomposable-units condition, 
  \begin{equation}\label{eq:IUsq}
    \vcenter{\xymatrix{
     X_0 \drpullback \ar[r]^-{=}\ar[d]_{s_0 s_0} & X_0 \ar[d]^{s_0} \\
     X_{2} \ar[r]_{d_{1}} & X_{1}
  }}\end{equation}
  motivates the name, in view of the following important corollary.
\end{blanko}
\begin{cor}\label{cor:IUtotallydegen}
  For a simplicial space $X$ satisfying the pullback condition~\eqref{eq:IUsq}, 
  if a $2$-simplex $\sigma\in X_2$ 
  has degenerate long edge $d_1 \sigma$ then $\sigma$ itself is totally 
  degenerate.
\end{cor}
\noindent
For the nerve of a category, this is the classical notion of indecomposable
identity arrows.  Note that if $X$ is furthermore complete, then the statement
of the corollary is actually `if and only if'.

\begin{lemma}\label{lem:IU}
  A stiff simplicial space $X$ satisfying the pullback condition~\eqref{eq:IUsq} has indecomposable units.
\end{lemma}
\begin{proof}
  The pullback square for a general instance of the indecomposable-units condition can be connected to the first instance~\eqref{eq:IUsq} by inert face maps,
  and the result follows from stiffness and the usual pullback argument.
\end{proof}

\begin{lemma}\label{lem:IUnondegen}
  For $X$ stiff and complete, we have
  $$
    \vcenter{\xymatrix{
     X_0 \drpullback \ar[r]^-{=}\ar[d]_{s_0 s_0} & X_0 \ar[d]^{s_0} \\
     X_{2} \ar[r]_{d_{1}} & X_{1}
  }}
  \qquad \Leftrightarrow \qquad
      \vcenter{\xymatrix{
     \varnothing \drpullback \ar[r]\ar[d] & X_0 \ar[d]^{s_0} \\
     \nondeg X_{2} \ar[r]_{d_{1}} & X_{1}\,.
  }}
  $$
\end{lemma}

\begin{proof}
  By completeness, we can write
  $X_2 = X_{00} + X_{0a} + X_{a0} + X_{aa}$.  We compute the pullback of
  $s_0$ to each of these summands, exploiting that degenerate principal edges
  only arise from degeneracy maps, cf.~Proposition~\ref{halfdecomp}.
  The first summand gives
  $$\xymatrix{
     X_0 \drpullback \ar[r]^=\ar[d]_= & X_0 \drpullback\ar[rr]^=\ar[d]_{s_0} &  
     & X_0 \ar[d]^{s_0} \\
     X_0 \ar[r]_{s_0} & X_1  \ar[r]_{s_0} & X_2 \ar[r]_{d_1} & X_1 ,
  }$$
  where the left-hand square is a pullback since $X$ is complete.
  The second summand gives
  $$\xymatrix{
     \varnothing \drpullback \ar[r]\ar[d] & X_0 \drpullback\ar[rr]^=
     \ar[d]_{s_0} &  & X_0 \ar[d]^{s_0} \\
     X_a \ar[r] & X_1  \ar[r]_{s_0} & X_2 \ar[r]_{d_1} & X_1 ,
  }$$  
  since $X_a$ and $X_0$ are disjoint in $X_1$.  The third summand is analogous 
  to the second.  In conclusion, the total pullback gives $X_0$ if and only if
  and the fourth summand gives $\varnothing$.  
\end{proof}
   
\begin{prop}
  A simplicial space $X: \simplexcategory\op\to\Grpd$ is split
  if and only if it is stiff, complete and has indecomposable units.
\end{prop}

\begin{proof}
  Suppose $X$ is split.  Then it is complete,
  and it follows from Proposition~\ref{halfdecomp} that
  it is stiff.  By Lemmas~\ref{lem:IU} and \ref{lem:IUnondegen}, it remains 
  just to check that the
  square
$$ \xymatrix{
     \varnothing \drpullback \ar[r]\ar[d] & X_0 \ar[d]^{s_0} \\
     \nondeg X_{2} \ar[r]_{d_{1}} & X_{1}
  }$$
  is a pullback, but this follows from splitness: since $X$ is stiff and complete, 
  nondegenerate is the same as effective (Proposition~\ref{halfdecomp}), so
  splitness implies that $d_1$ 
  maps $\nondeg X_2$ into $\nondeg X_1$, and $\nondeg X_1$ is 
  disjoint from $X_0$.
  
  Suppose now that $X$ is stiff and has indecomposable units.  Fix a simplex
  $\sigma\in X_{n+2}$.  We must show that if $\sigma$ is nondegenerate then also
  $d_j \sigma$ is nondegenerate for all $0 \leq j \leq n+2$.  By stiffness we
  already know that this is the case for $d_j$ inert, so it remains to treat the
  active case.  The contrapositive statement is that if for some $0 < j < n+2$
  we have that $d_j \sigma $ is degenerate then already $\sigma$ is degenerate.
  That is, if we have $d_j \sigma = s_i \tau$ for some indices $0 <j < n+2$ and
  $0\leq i \leq n$, and some simplex $\tau \in X_{n+1}$,
  then there exists a simplex $\rho\in X_{n+1}$ and an index $k$ such that
  $\sigma = s_k \rho$.  There are two cases: if $j=i+1$, then we have the 
  pullback square expressing indecomposable units
  $$\xymatrix{
     X_n \drpullback \ar[r]^-{=}\ar[d]_{s_i s_i} & X_n \ar[d]^{s_i} \\
     X_{n+2} \ar[r]_{d_{i+1}} & X_{n+1} , 
  }$$
  and we can take $\rho=s_i\tau$ and $k=i$.  On the other hand for $j\neq i+1$,
  we have the `bonus pullback' (cf.~Lemma~\ref{bonus-pullbacks})
  $$\xymatrix{
     X_{n+1} \drpullback \ar[r]\ar[d]_{s} & X_n \ar[d]^{s_i} \\
     X_{n+2} \ar[r]_{d_j} & X_{n+1}  ,
  }$$
  and we can take $\rho\in X_{n+1}$ to be the simplex corresponding to 
  $(\sigma,\tau)$ in the pullback.  In either case, we see that $\sigma$ is
  degenerate, as required.
\end{proof}

\begin{cor}\label{cor:splitexact}
  A complete simplicial space $X:\simplexcategory\op\to\Grpd$ is split if and only if
  it preserves pullbacks along degeneracy maps in $\simplexcategory\op$.
  In other words, every degeneracy map forms pullbacks with any other 
  face or degeneracy map.
\end{cor}
\begin{proof}
  Since $X$ is stiff, Lemma~\ref{bonus-pullbacks} says that
  $s_i$ forms pullbacks with all $d_j$ except $d_{i+1}$, but this case is
  covered by having indecomposable units.  On the other hand, again by bonus
  pullbacks, $s_i$ forms pullbacks against all $s_j$ except against itself,
  but this case is covered by being complete (cf.~\ref{completeexact}).
\end{proof}

\begin{cor}\label{cor:split=IU}
  A complete decomposition space is split if and only if it has indecomposable 
  units.
  \qed
\end{cor}

The \emph{long edge} of a simplex $\sigma\in X_n$ in a simplicial
space is the element $g(\sigma)\in X_1$, where $g:X_n\to X_1$ is the unique
active map.

\begin{prop}\label{prop:long-edge-degen}
  In a split simplicial space $X$, if the long edge of a simplex $\sigma\in 
  X_n$ is degenerate
  then the simplex is totally degenerate (that is, in the image of $s_0{}^n$).
\end{prop}
\begin{proof}
  Induction on $n$.  The case $n=2$ is Corollary~\ref{cor:IUtotallydegen}.
  Suppose the proposition is true in dimension $n$ and consider $\sigma'\in
  X_{n+1}$ with long edge $u' := d_1{}^n \sigma'$, assumed degenerate.
  Consider the $2$-simplex
  $\tau := d_1{}^{n-1} \sigma'$ and the $n$-simplex $\sigma := d_{n+1} \sigma'$.
  Then the long edge of $\tau$ is $d_1 \tau = d_1{}^n \sigma' = u'$, and the long
  edge of $\sigma$ is $d_1{}^{n-1}\sigma = d_1{}^{n-1} d_{n+1} \sigma' = d_2
  d_1{}^{n-1} \sigma' = d_2 \tau$:
  \vspace*{15pt}
  $$\xymatrix @! @C=0.4pc @R=0.8pc {
   \sigma':  & & 1 \ar@{}[rrrrd]^(0.25){\txt{$\sigma$}} 
   \ar@/^1.6pc/@{-->}[rrrr]&&&& n \ar[rd]^{d_0 \tau}&  \\
   & 0 \ar[ru]\ar[rrrrru]_(0.6){d_2 \tau} \ar[rrrrrr]_{u'}&&&&&& n{+}1
     \ar@{}[lllu]|(0.4){\txt{$\tau$}}
  }$$
  Since $u'$ is degenerate by assumption,
  $\tau$ is totally degenerate by induction, so in particular its principal
  edges $d_2\tau$ and $d_0\tau$ are degenerate.  But $d_2\tau$ is the long edge
  of $\sigma$, so by induction $\sigma$ is totally degenerate.  Since the
  principal edges of $\sigma'$ are those of $\sigma$ plus $d_0\tau$ in the end,
  we conclude that all principal edges of $\sigma'$ are degenerate, so $\sigma'$
  is totally degenerate by Proposition~\ref{halfdecomp} (as $X$ is stiff).
\end{proof}

\begin{blanko}{Rezk complete simplicial spaces.}\label{def:Rezk}
  A simplicial space $X:\simplexcategory\op\to\Grpd$ is called {\em Rezk complete}
  when $s_0 : X_0 \to X_1^{\operatorname{eq}}$ is an equivalence.  Here 
  $X_1^{\operatorname{eq}}$ is defined as the full sub-$\infty$-groupoid of 
  $X_1$
  spanned by those $f: x \to y$ for which there exists $\sigma,\tau\in X_2$ with
  $d_0 \sigma \simeq f$ and $d_1 \sigma \simeq s_0 y$, and $d_2 \tau \simeq f$ 
  and $d_1 \tau \simeq s_0 x$.  When $X$ is a Segal space, this definition 
  agrees with the usual definition.
\end{blanko}

\begin{lemma}\label{lem:Rezk}
  If a complete simplicial space has indecomposable units then it is Rezk 
  complete.
\end{lemma}
\begin{proof}
  Since $X_1^{\operatorname{eq}} \to X_1$ is mono by construction, and
  $s_0:X_0\to X_1$ is mono by completeness, to show that the two
  sub-$\infty$-groupoids coincide, it is enough to show that every element $f:x
  \to y$ in $X_1^{\operatorname{eq}}$ is actually degenerate.  But if $\sigma\in
  X_2$ exists with $d_1 \sigma \simeq s_0 y$ and $d_0 \sigma \simeq f$, as in
  the definition of $X_1^{\operatorname{eq}}$, then indecomposability of units
  implies that $f$ is degenerate.
\end{proof}

\begin{blanko}{Semi-decomposition spaces.}
  Let $\Deltainj \subset \simplexcategory$ denote the subcategory
  consisting of all the objects and only the injective maps.
  A {\em semi-simplicial} space is an object in the functor $\infty$-category
  $\Fun(\Deltainj\op, \Grpd)$.
  A {\em semi-decomposition} space is a semi-simplicial space preserving
  active-inert pullbacks in $\Deltainj\op$.  Since there
  are no degeneracy maps in $\Deltainj$, this means that we
  are concerned only with pullbacks between active face maps and inert face 
  maps.
  
  Every simplicial space has an underlying semi-simplicial space obtained by
  restriction along $\Deltainj \subset \simplexcategory$.  
  The forgetful functor $\Fun(\simplexcategory\op, \Grpd) \to
  \Fun(\Deltainj\op, \Grpd)$ has a left adjoint given by left Kan 
  extension along $\Deltainj \subset \simplexcategory$:
  $$
  \xymatrix {
  \Deltainj\op \ar[d] \ar[r]^Z & \Grpd \\
  \simplexcategory\op \ar@{..>}[ru]_{\overline Z} &
  }
  $$
  The left Kan extension has the following explicit description:
  \begin{align*}
    \overline Z_0 = & Z_0 \\
    \overline Z_1 = & Z_1 + Z_0\\
    \overline Z_2 = & Z_2 + Z_1 +Z_1 +Z_0\\
    \vdots & \\
    \overline Z_k = & \sum_{w\in\{0,a\}^k} Z_{|w|_a}
  \end{align*}
  For $w\in\{0,a\}^k$ and $\sigma\in Z_{|w|_a}$ the corresponding element
  of $\overline Z_k$ is denoted
$$
s_{i_r}\dots s_{i_2}s_{i_1}\sigma
$$
where $r=k-|w|_a$ and $i_1<i_2<\dots<i_r$ with $w_{i_j}=0$.
  The faces and degeneracies of such elements are defined in the obvious way.
\end{blanko}

\begin{prop}\label{prop:splitKan}
  A simplicial space is split if and only if it is the
  left Kan extension of a semi-simplicial space.
\end{prop}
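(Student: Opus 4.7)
The plan is to prove the two directions separately, leveraging the explicit level-wise formula $\overline Z_k = \sum_{w\in\{0,a\}^k} Z_{|w|_a}$ for the left Kan extension along $\Deltainj \subset \Delta$.

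For the ``if'' direction, assume $X = \overline Z$ for some semi-simplicial space $Z$. Each degeneracy map of $\overline Z$ is by construction a coproduct inclusion of one summand into another (inserting a $0$ in the indexing word), hence is a monomorphism, so $X$ is complete. The nondegenerate simplices in degree $k$ are then exactly the summand $Z_k \subset \overline Z_k$ indexed by the all-$a$ word $w = aa\cdots a$, since every other summand lies in the image of some iterated degeneracy. Face maps restricted to this summand agree, by construction, with the face maps of the semi-simplicial space $Z$ and so land in $Z_{k-1} \subset \overline Z_{k-1}$; hence they preserve nondegenerate simplices, and $X$ is split.

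For the ``only if'' direction, assume $X$ is split, and set $Z := \nondeg X$, viewed as a semi-simplicial space. This is well-defined because face maps preserve nondegenerate simplices by hypothesis. The inclusion $Z \hookrightarrow X\vert_{\Deltainj}$ induces, by the left-Kan-extension adjunction, a simplicial map $\overline Z \to X$. To show this is an equivalence, observe that a split simplicial space is stiff by Proposition~\ref{halfdecomp}, so Proposition~\ref{halfdecomp}(5) (equivalently Corollary~\ref{prop:word=degen}) provides the sum splitting
$$
X_n \;=\; \sum_{w\in\{0,a\}^n} X_w,
$$
where each summand $X_w$ is identified, via the iterated degeneracy $s_{j_k-1}\dots s_{j_1-1}$ with $\{j_1<\cdots<j_k\} = \{j : w_j=0\}$, with $\nondeg X_{|w|_a} = Z_{|w|_a}$. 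Level by level this matches the defining formula for $\overline Z_n$, exhibiting the map $\overline Z \to X$ as a degreewise equivalence.

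The core technical content is the identification $X_n \simeq \sum_{w} \nondeg X_{|w|_a}$ realised by degeneracies, which is already supplied by Proposition~\ref{halfdecomp}(5); the remaining work is just verifying that $\overline Z \to X$ respects the full simplicial structure. Compatibility with degeneracies is immediate from the observation that on both sides they realise the coproduct inclusions. Compatibility with face maps is a bookkeeping exercise: on the distinguished summand $\nondeg X_n = Z_n$ it recovers, by construction, the semi-simplicial face maps of $Z$, and on a summand $X_w$ with $w$ containing a $0$ it is forced by the simplicial identities relating $d_i$ with the degeneracies that produced the $0$'s. This verification is the only real obstacle, and it is essentially formal.
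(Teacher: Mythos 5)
Your proposal is correct and follows essentially the same route as the paper: the ``if'' direction identifies the nondegenerate simplices of $\overline Z$ with the original components $Z_k$, and the ``only if'' direction uses splitness $\Rightarrow$ stiffness together with the sum splitting $X_n = \sum_{w\in\{0,a\}^n} X_w$ of Proposition~\ref{halfdecomp} to match $X$ levelwise with $\overline{(\nondeg X)}$. Your write-up is somewhat more explicit than the paper's about constructing the comparison map $\overline Z \to X$ via the adjunction and checking compatibility with faces and degeneracies, but the substance is identical.
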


\begin{proof}
  Given $Z: \Deltainj\op\to\Grpd$, it is clear from the construction
  that the new degeneracy maps in $\overline Z$ are monomorphisms.  Hence 
  $\overline Z$ is complete.
  On the other hand, to say that $\sigma\in \overline Z_n$ is nondegenerate
  is precisely to say that it belongs to the original component $Z_n$, and
  the face maps here are the original face maps, hence map $\sigma$ into
  $Z_{n-1}$ which is precisely the nondegenerate component of $\overline 
  Z_{n-1}$.  Hence $\overline Z$ is split.

  For the other implication, given a split simplicial space $X$, since $X$
  is stiff and complete, we know that nondegenerate is the same as
  effective (Proposition \ref{halfdecomp}) and we have a sum splitting
  $$
  X_n \;\;= \sum_{w\in \{0,a\}^n} X_w .
  $$
  Now by assumption the face maps restrict to the nondegenerate simplices
  to give a semi-simplicial space $\nondeg X : \Deltainj\op \to \Grpd$.
  It is now clear from the explicit description of the left Kan extension
  that $\overline{(\nondeg X_n)} =X_n$, from where it follows readily that
  $X$ is the left Kan extension of $\nondeg X$.
\end{proof}

\begin{prop}
  A simplicial space is a split decomposition space if and only if it is the
  left Kan extension of a semi-decomposition space.
\end{prop}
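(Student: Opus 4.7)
The plan is to bootstrap off the previous proposition: since left Kan extension of a semi-simplicial space is automatically split (hence complete and stiff), it only remains to compare the full decomposition axiom on $\overline Z$ with the semi-decomposition axiom on $Z$.

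First I would organise the generic-free pushouts in $\Delta$ into two families: (i) those in which the generic leg is a codegeneracy (equivalently, pushouts of a degeneracy along a free map), and (ii) those in which the generic leg is an inner coface (equivalently, pushouts lying entirely in $\Deltainj$, between an inner and an outer coface). Family (i) is exactly what stiffness encodes, and by Proposition~\ref{halfdecomp} every split simplicial space already sends such pushouts to pullbacks; Lemma~\ref{bonus-pullbacks} and Lemma~\ref{newbonuspullbacks} also belong to this bookkeeping. The extra content of the decomposition axiom on a split simplicial space is therefore exactly the condition that family (ii) becomes pullbacks, and these are the pushouts recognised by the semi-decomposition axiom on the underlying semi-simplicial space.

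For the forward direction, assume $\overline Z$ is a (split) decomposition space. Take a generic-free pullback square in $\overline Z$ corresponding to a family (ii) pushout, so that all four vertices are some $\overline Z_n$ and all four arrows are face maps. Restricting to the full subgroupoid of nondegenerate (equivalently, effective, by \ref{halfdecomp}) simplices, which face maps preserve by splitness, and using extensivity of $\Grpd$ together with the identification of $\nondeg{\overline Z}_n$ with the summand $Z_n$ of $\overline Z_n = \sum_{w\in\{0,a\}^n} Z_{|w|_a}$, yields a pullback square in $Z$. Hence $Z$ is a semi-decomposition space.

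For the backward direction, assume $Z$ is a semi-decomposition space, and take a family (ii) pushout in $\Delta$. Using the sum splitting $\overline Z_n = \sum_{w\in\{0,a\}^n} Z_{|w|_a}$ on each of the four corners, the resulting square in $\overline Z$ decomposes as a sum of squares indexed by the word on the target (or source) corner, with the word at the other corners determined by how the face maps in the square act on letters (deleting or combining). Each summand is either a pullback coming from the semi-decomposition axiom applied to $Z$, or a square that is degenerate in a tautological way (two opposite arrows are identities after pulling out a common $s_i$ via stiffness); in either case it is a pullback. Extensivity of $\Grpd$ assembles these summand pullbacks into the desired pullback in $\overline Z$. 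The main obstacle is precisely this combinatorial matching: one must verify that the sum decompositions on all four corners are compatible with the four face maps, so that the original square really is the sum of the claimed per-word squares — this amounts to checking that a face map of $\overline Z$ acts on the word splitting in exactly the way dictated by deleting or coalescing the corresponding letter, which is built into the explicit description of the left Kan extension.
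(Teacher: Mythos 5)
Your proof is correct and follows essentially the same route as the paper's: both reduce the decomposition-space axiom to the two families of basic generic-free squares, dispose of the degeneracy/outer-face family via the sum-inclusion nature of the degeneracy maps (which you package as stiffness via Proposition~\ref{halfdecomp}), and identify the inner/outer face family with the semi-decomposition axiom on $Z$ through the word-indexed sum splitting and extensivity. You supply more detail than the paper's two-line argument --- in particular for the forward direction, which the paper simply declares clear --- but the underlying ideas coincide.
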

\begin{proof}
  It is clear that if $X$ is a split decomposition space then $\nondeg X$
  is a semi-decomposition space.  Conversely, if $Z$ is a semi-decomposition
  space, then one can check by inspection that $\overline Z$ satisfies the
  four pullback conditions in ~\cite[Proposition 3.3]{GKT:DSIAMI-1}: two of these
  diagrams concern only face maps, and they are essentially from $Z$, with
  degenerate stuff added.  The two diagrams involving degeneracy maps are
  easily seen to be pullbacks since the degeneracy maps are sum inclusions.
\end{proof}

\begin{blanko}{Example (continued).}\label{ex:graphsKan}
  The split decomposition space $\GG$ of finite graphs (see Examples~\ref{ex:graphsdecomp} and \ref{ex:graphssplit}) 
  is the
  left Kan extension of a semi-simplicial space $Z$ where $Z_n$ is the groupoid of $n$-layered
  graphs with no empty layers.  Here $Z_0$ is still the contractible groupoid
  consisting of the $0$-layered empty graph, and the left Kan extension freely
  adds all the degenerate $n$-layerings for $n>0$.
\end{blanko}

\begin{thm}\label{thm:semisimpl=splitcons}
  The left adjoint functor 
  $\Fun(\Deltainj\op,\Grpd) \to \Fun(\simplexcategory\op\!,\Grpd)$ given by Kan extension along $\Deltainj\subset\simplexcategory$
  induces an equivalence of $\infty$-categories
  $$
\Fun(\Deltainj\op,\Grpd) \simeq \kat{Split}^{\mathrm cons},
$$
the $\infty$-category of split simplicial spaces and conservative maps.
\end{thm}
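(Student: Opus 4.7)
\begin{proof*}{Proof proposal.}
The plan is to construct an explicit inverse functor $\nondeg{(-)}: \kat{Split}^{\mathrm cons} \to \Fun(\Deltainj\op, \Grpd)$, sending each split simplicial space to its semi-simplicial space of nondegenerate simplices, and check that both composites with $\overline{(-)}$ are naturally equivalent to the identity. The functor $\nondeg{(-)}$ is well-defined: by the definition of split, the face maps of $X$ preserve nondegenerate simplices, so restricting to $\nondeg X_n \subset X_n$ yields a semi-simplicial space; on morphisms, a conservative map $f: Y \to X$ between split (hence stiff) simplicial spaces restricts to maps $\nondeg Y_n \to \nondeg X_n$ by Corollary~\ref{lem:cons-wn} applied to the word $w = a\cdots a$, and these restrictions commute with face maps by functoriality.

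The object-level equivalences are already in hand. The composite $Z \mapsto \overline Z \mapsto \nondeg{\overline Z}$ is the identity because in the explicit formula $\overline Z_n = \sum_{w\in\{0,a\}^n} Z_{|w|_a}$ the nondegenerate component is exactly the $w = a\cdots a$ summand, namely $Z_n$ with its original face maps. The composite $X \mapsto \nondeg X \mapsto \overline{\nondeg X}$ is the identity on split $X$ by the proposition immediately preceding the theorem.

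The essential content of the theorem is the compatibility on mapping spaces. Left Kan extension along $\Deltainj \subset \Delta$ is left adjoint to restriction, giving a natural equivalence $\Map_\Delta(\overline Z, X) \simeq \Map_{\Deltainj}(Z, X|_{\Deltainj})$. I claim that when $X$ is split, this equivalence identifies the subspace of conservative maps on the left with the subspace of semi-simplicial maps $Z \to \nondeg X \hookrightarrow X|_{\Deltainj}$ on the right. Given a semi-simplicial $g: Z \to X|_{\Deltainj}$ with adjunct $\overline g: \overline Z \to X$, Proposition~\ref{stiff-cons} says that $\overline g$ is conservative if and only if $\overline g_1$ sends $\overline Z_a = Z_1$ into $X_a = \nondeg X_1$, i.e.~if and only if $g_1$ factors through $\nondeg X_1$. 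Since semi-simplicial maps commute with outer face maps, the principal edges of $g_n(\sigma)$ are the images under $g_1$ of the principal edges of $\sigma \in Z_n$; thus if $g_1$ lands in $\nondeg X_1$, every $g_n(\sigma)$ is effective, hence nondegenerate by Proposition~\ref{halfdecomp} since $X$ is stiff. So $g$ factors through $\nondeg X$, giving the required matching of subspaces. The main obstacle in the proof is precisely this step of translating conservativity of the adjunct $\overline g$ into a degree-wise factorisation condition on $g$; the rest is formal, since essential surjectivity together with the identification of mapping subspaces yields the claimed equivalence of $\infty$-categories.
\end{proof*}
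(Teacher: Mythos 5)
Your proposal is correct, and it reaches the mapping-space identification by a genuinely different mechanism than the paper. The paper proves $\Cons(Y,X)\simeq\Nat(\nondeg Y,\nondeg X)$ directly, by writing both sides as limits and then eliminating, one by one, the vertices of the limit diagram indexed by degeneracy maps (observing that the pre- and post-composition maps into $\Map(Y_v,X_{n+1})$ are monomorphisms with the same essential image $\Map(Y_v,X_{0v})$, so the limit factors through the diagonal). You instead treat the Kan-extension/restriction adjunction as a black box, reducing everything to the claim that under $\Map_\Delta(\overline Z,X)\simeq\Map_{\Deltainj}(Z,X|_{\Deltainj})$ the full subspace of conservative maps corresponds to the full subspace of maps factoring through $\nondeg X$; this you verify on points via Proposition~\ref{stiff-cons} (conservativity is detected on $X_a$) together with the fact that effectiveness is checked on principal edges and equals nondegeneracy for stiff complete spaces (Proposition~\ref{halfdecomp}). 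Both arguments hinge on the same key input, Proposition~\ref{stiff-cons}, but yours trades the paper's explicit limit-diagram surgery for the formal adjunction plus a componentwise factorisation criterion, which is arguably cleaner and makes the functoriality of the equivalence more transparent; the paper's version has the merit of exhibiting concretely how the degeneracy constraints are redundant in the limit presentation. One small point worth making explicit in your write-up: since both $\Cons(\overline Z,X)\subset\Nat(\overline Z,X)$ and $\Map_{\Deltainj}(Z,\nondeg X)\subset\Map_{\Deltainj}(Z,X|_{\Deltainj})$ are monomorphisms of $\infty$-groupoids (hence inclusions of connected components), checking that they match on points, as you do, really does suffice to identify them as subspaces.
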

\begin{proof}
  Let $X$ and $Y$ be split simplicial spaces, then $\nondeg X$ and $\nondeg Y$
  are semi-simplicial spaces whose left Kan extensions are $X$ and $Y$ again.
  The claim is that
  $$
  \operatorname{Cons}(Y,X) \simeq \Nat(\nondeg Y, \nondeg X).
  $$
  Intuitively, the reason this is true can be seen in the first  square as in
  the proof of Lemma~\ref{stiff-cons}: to give a pullback square
    $$\xymatrix{
     Y_0 \drpullback \ar[r]^-{s_0}\ar[d] & Y_0 + Y_a \ar[d] \\
     X_0 \ar[r]_-{s_0} & X_0 + X_a  ,
  }$$
  amounts to giving $Y_0 \to X_0$ and $Y_a\to X_a$ (and of course, in both cases
  this data is required to be natural in face maps), that is to give
  a natural transformation $\nondeg Y 
  \to \nondeg X$.  To formalise this idea, note first that $\Nat(\nondeg Y ,
  \nondeg X)$ can be described as a limit
  $$
  \Nat(\nondeg Y ,\nondeg X)  \longrightarrow \prod_{n\in \N} \Map(\nondeg Y_n, 
  \nondeg X_n) \to \ldots
  $$
  where the rest of the diagram contains vertices indexed by all the face maps,
  expressing naturality.  Similarly
   $\Nat(Y,X)$ is given as a limit
  $$
  \Nat( Y , X)  \longrightarrow \prod_{n\in \N} \Map( Y_n, 
   X_n) \to \ldots
  $$
  where this time the rest of the diagram furthermore
  contains vertices corresponding to degeneracy maps.  The full subspace of 
  conservative maps is given instead as
  $$
  \Cons( Y , X)  \longrightarrow \prod_{w\in \{0,a\}\upperstar} \Map( 
  Y_w, 
   X_w) \to \ldots
  $$
  as explained in connection with Lemma~\ref{stiff-cons}.  Now for each 
  degeneracy map $s_i : X_n \to X_{n+1}$, there is a vertex in the diagram. 
  For ease of notation, let us consider $s_0 : X_n \to 
  X_{n+1}$.  The corresponding vertex sits in the limit diagram as follows:
  for each word $v\in \{0,a\}^n$, we have
  $$\xymatrix{
     {\displaystyle\prod_{w\in \{0,a\}\upperstar} \Map(Y_w,X_w)  }
     \ar[r]^-{\text{proj}}\ar[d]_-{\text{proj}} & \Map(Y_{0v}, X_{0v}) 
     \ar[d]^{\text{pre }s_0} \\
     \Map(Y_v,X_v) \ar[r]_{\text{post }s_0} & \Map(Y_n,X_{n+1}) .     
  }$$
  Now both the pre and post composition maps are monomorphisms with essential
  image $\Map(Y_v, X_{0v})$, so the two projections coincide, which is to say
  that the limit factors through the corresponding diagonal.  Applying this
  argument for every degeneracy map $s_i : X_n \to X_{n+1}$, and for all words,
  we conclude that the limit factors through the product indexed only over the
  words without degeneracies,
  $$
  \prod_{n\in \N} \Map(\nondeg Y_n, \nondeg X_n) .
  $$
  Having thus eliminated all the vertices of the limit diagram that corresponded
  to degeneracy maps, the remaining diagram has precisely the shape of the 
  diagram
  computing $\Nat(\nondeg Y ,\nondeg X)$, and we have already seen that the
  `starting vertex' is the same, $\prod_{n\in \N} \Map(\nondeg Y_n, \nondeg 
  X_n)$.  For the remaining vertices, those corresponding to face maps, 
  it is readily seen that in each case the space is that of the $\Nat(\nondeg 
  Y, \nondeg X)$ diagram, modulo some constant factors that do not play any role in
  the limit calculation.  In conclusion, the diagram calculating $\Cons( Y , X)$
  as a limit is naturally identified with the diagram calculating 
  $\Nat(\nondeg Y, \nondeg X)$ as a limit.
\end{proof}

\begin{prop}\label{prop:split=semi}
  The equivalence of Theorem~\ref{thm:semisimpl=splitcons}
  restricts to an equivalence between semi-decomposition spaces
  and all maps and split decomposition spaces and conservative maps, and it 
  restricts further to an equivalence between semi-decomposition spaces and
  ULF maps and split decomposition spaces and \culf maps.
\end{prop}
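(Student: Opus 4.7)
The plan is to leverage Theorem~\ref{thm:semisimpl=splitcons} together with the characterisation of split decomposition spaces as left Kan extensions of semi-decomposition spaces (the proposition preceding Theorem~\ref{thm:semisimpl=splitcons}). The first restriction is then essentially immediate: on objects, left Kan extension and its inverse (restriction to the nondegenerate part) interchange semi-decomposition spaces and split decomposition spaces; on morphisms, Theorem~\ref{thm:semisimpl=splitcons} already provides the equivalence $\Cons(Y,X) \simeq \Nat(\nondeg Y, \nondeg X)$ in full generality. No extra condition is imposed on semi-simplicial maps, matching the statement.

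The substantive step is the second restriction: one must show that, under this correspondence, a conservative map $f: Y \to X$ between split decomposition spaces is ULF if and only if $\nondeg f: \nondeg Y \to \nondeg X$ is cartesian on generic face maps. The key tool is Proposition~\ref{stiff-cons}: since split implies stiff, $f$ preserves the word splitting $Y_{n+1} = \sum_{w \in \{0,a\}^{n+1}} Y_w$. For any generic face map $d_i$, the pullback square for $f$ therefore decomposes as a coproduct of squares indexed by words $w$. For $w = a^{n+1}$ the restricted square is precisely the nondegenerate square $d_i\colon \nondeg Y_{n+1} \to \nondeg Y_n$ over $d_i\colon \nondeg X_{n+1} \to \nondeg X_n$, using that in a split decomposition space face maps preserve nondegenerates.

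For each remaining word $w \neq a^{n+1}$, Proposition~\ref{halfdecomp}(5) identifies $X_w$ with the image of a composite of degeneracies $s_{j_k-1}\cdots s_{j_1-1}$ applied to $\nondeg X_{n+1-k}$. Using the simplicial identities together with Lemma~\ref{bonus-pullbacks}, the restricted $d_i$-square in this sector can be rebuilt from squares of pure degeneracies --- cartesian by conservativeness --- together with identity squares, so is automatically cartesian. Hence the total $d_i$-square is cartesian if and only if its nondegenerate component is, giving the claimed equivalence. The main obstacle, where care must be taken, is the bookkeeping of simplicial identities when $w$ contains several zeros; this is handled by induction on the number of zeros, each step reducing either to an identity (when $i$ collides with a $j_l-1$) or to a smaller composite of degeneracies applied to a face map of strictly lower dimension, to which the induction hypothesis applies.
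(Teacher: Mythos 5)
The paper states this proposition without proof (the argument was evidently omitted as routine), so there is no authorial proof to compare against; judged on its own merits, your argument is correct and is surely the intended one. The object-level claim and the first restriction do follow immediately from Theorem~\ref{thm:semisimpl=splitcons} together with the proposition identifying split decomposition spaces with left Kan extensions of semi-decomposition spaces, and for the second restriction your decomposition of the $d_i$-naturality square into word sectors (legitimate by extensivity, since conservative maps preserve the splitting $Y_{n+1}=\sum_w Y_w$ by Proposition~\ref{stiff-cons}) is the right mechanism. Two points deserve more care than your wording gives them. First, the $w=a\cdots a$ sector is not \emph{precisely} the naturality square of $d_i\colon \nondeg Y_{n+1}\to\nondeg Y_n$: its right-hand edge lands in $Y_n$, not $\nondeg Y_n$; it is the horizontal composite of that square with the conservativity pullback $\nondeg Y_n\to Y_n$ over $\nondeg X_n\to X_n$ (Corollary~\ref{lem:cons-wn}), so by Lemma~\ref{pbk} it is cartesian if and only if the nondegenerate square is --- which is what you need, but should be said. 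Second, the sectors with $w\neq a\cdots a$ are \emph{not} cartesian ``by conservativeness'' alone: when neither position $i$ nor $i+1$ of $w$ carries a $0$, commuting $d_i$ past the degeneracies of Proposition~\ref{halfdecomp}(5) leaves an \emph{inner} face map $\nondeg X_{n+1-k}\to\nondeg X_{n-k}$, and the resulting square is exactly a ULF square for $\nondeg f$ in lower dimension. Hence these sectors are cartesian only under the hypothesis that $\nondeg f$ is ULF, which is the direction in which you actually need them; in the converse direction one uses only the $a\cdots a$ sector, extracted from the total pullback by base change along the summand inclusion $X_{a\cdots a}\into X_{n+1}$. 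Your closing induction clause does acknowledge this reduction, so the proof goes through, but the two implications of the ``if and only if'' should be run separately rather than deduced simultaneously from a sectorwise equivalence, since the auxiliary sectors are only known to be cartesian under one of the two hypotheses.
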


\begin{blanko}{Dyckerhoff--Kapranov $2$-Segal semi-simplicial spaces.}
  Dyckerhoff and Kapranov's notion of $2$-Segal space 
  \cite{Dyckerhoff-Kapranov:1212.3563} does not refer to 
  degeneracy maps at all, and can be formulated already for
  semi-simplicial spaces:
  a $2$-Segal space is precisely a simplicial space whose underlying
  semi-simplicial space is a semi-decomposition space.
  We get the following corollary to the results above.
\end{blanko}

\begin{cor}
  Every split decomposition space is the left 
  Kan extension of a $2$-Segal semi-simplicial space.
\end{cor}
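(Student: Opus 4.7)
The plan is to observe that this corollary is essentially an immediate repackaging of the proposition just proved, once the terminology ``$2$-Segal semi-simplicial space'' is unwound. Since the Dyckerhoff--Kapranov $2$-Segal condition refers only to face maps, it makes sense already at the level of semi-simplicial spaces, and as noted in the blanko immediately preceding the corollary, a $2$-Segal semi-simplicial space is precisely the same data as a semi-decomposition space: the $2$-Segal pullback squares are exactly the generic-free pullback squares in $\Deltainj$.

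First, I would take a split decomposition space $X$ and invoke the proposition that $X$ is the left Kan extension along $\Deltainj \subset \Delta$ of its nondegenerate part $\nondeg X$, and that moreover $\nondeg X$ is a semi-decomposition space. This is available because $X$, being split, is in particular complete and stiff, so by Proposition~\ref{halfdecomp} nondegenerate simplices coincide with effective simplices and are preserved by all face maps, making $\nondeg X : \Deltainj\op \to \Grpd$ well-defined.

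Second, I would translate the conclusion: $\nondeg X$, being a semi-decomposition space, is a $2$-Segal semi-simplicial space in the sense just discussed. Hence $X = \overline{\nondeg X}$ exhibits $X$ as the left Kan extension of a $2$-Segal semi-simplicial space, which is what the corollary asserts.

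There is no real obstacle here; the whole content sits in the preceding proposition and in the observation that the $2$-Segal axiom is degeneracy-free. The only thing that deserves a brief sentence in the write-up is the identification between $2$-Segal semi-simplicial spaces and semi-decomposition spaces, which follows because both conditions reduce to the same collection of pullback squares in $\Deltainj$.
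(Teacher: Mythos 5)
Your proposal is correct and follows exactly the route the paper intends: the corollary is an immediate consequence of the preceding proposition (a split decomposition space is the left Kan extension of a semi-decomposition space) combined with the observation in the preceding remark that a $2$-Segal semi-simplicial space is the same thing as a semi-decomposition space. Nothing is missing.
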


\section{The length filtration}
\label{sec:length}

In this section we introduce the notion of length of an
edge ($1$-simplex) in a complete decomposition space, and 
the corresponding notion of locally finite 
length, which endows the resulting coalgebra with an important 
filtration.  Locally finite length is one of two finiteness 
conditions in the notion of M\"obius decomposition space that
we are building up to.

\begin{blanko}{Length.}\label{length}
  Let $a$ be an edge in a complete decomposition space $X$.  The {\em
  length} of $a$ is defined to be the largest dimension of an effective
  simplex (that is, of a nondegenerate simplex, see \ref{effective} and
  Corollary~\ref{effective=nondegen}) with long edge $a$:
  $$
  \ell(a) := \max \{ \dim\sigma \mid \sigma\in \nondeg X, g(\sigma)= a \} ,
  $$
  where as usual $g: X_r \to X_1$ denotes the unique active map.
  More formally: the length is the greatest $r$ such that the pullback 
  $$\xymatrix{
     (\nondeg X_r)_a \drpullback \ar[r]\ar[d] & \nondeg X_r \ar[d]^g \\
     1 \ar[r]_{\name a} & X_1
  }$$
  is nonempty (or $\infty$ if there is no such greatest $r$).
  Length zero can happen only for degenerate edges.
\end{blanko}

\begin{blanko}{Decomposition spaces of locally finite length.}
  A complete decomposition space $X$ is said to have {\em locally finite length}
  when every edge $a\in X_1$
  has finite length.  That is, the pullback
  $$\xymatrix{
     (\nondeg X_r)_a \drpullback \ar[r]\ar[d] & \nondeg X_r \ar[d]^g \\
     1 \ar[r]_{\name a} & X_1
  }$$
  is empty for $r \gg 0$.
  We shall also use the word {\em tight} as synonym for `complete and of locally finite 
  length',
  to avoid confusion with the notion of `locally finite' introduced in 
  Section~\ref{sec:findec}.
\end{blanko}

\begin{eks}
  For posets, the notion of locally finite length coincides with the classical
  notion (see for example Stern~\cite{Stern:1999}), namely that for every $x\leq
  y$, there is an upper bound on the possible lengths of chains from $x$ to $y$.
  When $X$ is the strict (resp.~fat) nerve of a category, locally finite length means that
  for each arrow $a$, there is an upper bound on 
  the length of factorisations of $a$ containing no identity (resp.~invertible)
  arrows.

  A paradigmatic non-example is given by the strict nerve of a category
  containing an idempotent non-identity endo-arrow, $e= e\circ e$: clearly $e$ admits
  arbitrarily long decompositions $e = e\circ \cdots \circ e$.
\end{eks}

\begin{blanko}{Example (continued).}\label{ex:graphstight}
  The decomposition space $\GG$ of finite graphs (Example \ref{ex:graphsdecomp}) is of locally
  finite length, since a graph $G\in \GG_1$ with $k$ vertices can have at most $k$
  nonempty layers.  (For similar reasons, many other examples of decomposition
  spaces of combinatorial nature have locally finite length~\cite{GKT:ex}.)
\end{blanko}

\begin{prop}\label{prop:cULF/FILT=FILT}
  If $f:Y \to X$ is \culf and $X$ is a \FILT decomposition space, then also $Y$ 
  is \FILT.
\end{prop}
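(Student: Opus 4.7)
The plan is to reduce the finiteness of length in $Y$ to the finiteness of length in $X$ via the pullback identification from Proposition~\ref{prop:cULF-nondegen}. Recall that length of an edge $a\in X_1$ is defined through the pullback $(\nondeg X_r)_a$ of $\nondeg X_r \to X_1$ along $\name a : 1 \to X_1$; tightness means this pullback is empty for $r$ large enough (depending on $a$).

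The key ingredient is Proposition~\ref{prop:cULF-nondegen} applied to the word $w = aa\cdots a$ of length $r$, which gives a pullback square
$$
\xymatrix{
Y_1 \ar[d]_{f_1} & \nondeg Y_r \ar[l]_-{g} \dlpullback \ar[d] \\
X_1 & \nondeg X_r \ar[l]^-{g} .
}
$$
Thus, given any edge $b\in Y_1$, I can compute $(\nondeg Y_r)_b$ by pasting this square with the pullback defining the fibre over $b$:
$$
\xymatrix{
(\nondeg Y_r)_b \dlpullback \ar[r]\ar[d] & \nondeg Y_r \ar[d]\dlpullback \ar[r]^g & Y_1 \ar[d]^{f_1} \\
1 \ar[r]_-{\name b} & 1 \ar[r]_-{\name{f_1(b)}} & X_1 .
}
$$
The outer rectangle is a pullback (its right vertical leg factors through $\nondeg X_r$), so by Lemma~\ref{pbk} we conclude
$(\nondeg Y_r)_b \;\simeq\; (\nondeg X_r)_{f_1(b)}.$

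Since $X$ is \FILT, the right-hand side is empty for $r$ sufficiently large, and hence so is $(\nondeg Y_r)_b$. This holds for every $b\in Y_1$, so every edge of $Y$ has finite length, i.e.\ $Y$ is \FILT. The only thing to verify carefully is the identification of the pullback, but this is immediate from Proposition~\ref{prop:cULF-nondegen} plus pullback pasting (Lemma~\ref{pbk}); no genuine obstacle arises.
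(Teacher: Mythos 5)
Your fibre computation is correct, and it is exactly what the paper means by its one-line final step (``$Y$ is also tight by Proposition~\ref{prop:cULF-nondegen}''): pasting the fibre square over $b$ with the pullback square of Proposition~\ref{prop:cULF-nondegen} for the word $w=a\cdots a$ identifies $(\nondeg Y_r)_b$ with $(\nondeg X_r)_{f_1(b)}$, which is empty for $r\gg 0$ since $X$ is tight. (Your displayed three-column diagram is garbled --- the middle object in the bottom row should be $Y_1$, not $1$, with $f_1$ as the right-hand bottom arrow --- but the intended pasting is clear and valid.)

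The gap is that you never establish that $Y$ is a complete decomposition space, and this is not a formality. First, ``tight'' is by definition a property of \emph{complete decomposition spaces}, so without it the conclusion does not even parse. Second, Proposition~\ref{prop:cULF-nondegen}, on which your entire argument rests, has as hypothesis that $Y$ is a complete simplicial space; the hypotheses of the statement only give you a simplicial space $Y$ with a cULF map to $X$. The paper's proof devotes essentially all of its length to exactly these two points: $Y$ is a decomposition space because the decomposition-space property pulls back along cULF maps (\cite[Lemma 4.6]{GKT:DSIAMI-1}), and $Y$ is complete because a cULF map is in particular conservative, so $s_0:Y_0\to Y_1$ is a pullback of the monomorphism $s_0:X_0\to X_1$, and monomorphisms are stable under pullback. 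You need to supply both of these before invoking Proposition~\ref{prop:cULF-nondegen} and before concluding that $Y$ is tight.
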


\begin{proof}
  Since $X$ is a decomposition space and since $f$ is \culf, also $Y$ is a 
  decomposition space (\cite[Lemma 4.6]{GKT:DSIAMI-1}), and the
  \culf condition ensures that $Y$ is furthermore complete, because the $s_0$ of 
  $Y$ is the pullback of the $s_0$ of $X$.  Finally, $Y$ is also \FILT by
  Proposition~\ref{prop:cULF-nondegen}.
\end{proof}

\begin{prop}\label{prop:tight=>split}
  A tight decomposition space is split.
\end{prop}
\begin{proof}
  A tight decomposition space $X$ is in particular complete and stiff, so
  by Lemmas~\ref{lem:IU} and \ref{lem:IUnondegen} it is enough to prove that for $r=2$ 
  we have a pullback square
  $$\xymatrix{
     \varnothing\drpullback \ar[r]\ar[d] & X_0 \ar[d]^{s_0} \\
     \nondeg X_r \ar[r]_g & X_1 ,
  }$$
  where $g: X_r \to X_1$ is the unique active map (or equivalently, the long-edge map $g$ preserves nondegenerate simplices.)
  We actually prove this for $r\geq 2$.
  Suppose that $\sigma\in \nondeg X_r$ has degenerate long edge $u = g\sigma$.
  The idea is to exploit the decomposition-space axiom to glue together two copies of
  $\sigma$, called $\sigma_1$ and $\sigma_2$, to get a bigger nondegenerate simplex 
  $\sigma_1\#\sigma_2 \in \nondeg X_{r+r}$ again
  with long edge $u$.  By repeating this construction we obtain a contradiction
  to the finite length of $u$.  It is essential for this construction that
  $u$ is degenerate, say $u = s_0 x$, because we glue along the $2$-simplex 
  $\tau = s_0 u = s_1 u = s_0 s_0 x$ which has the property that all three edges are 
  $u$.  Here is a picture of the gluing:
  $$\xymatrix @! @C=1.2pc @R=1.4pc  {
  \\
    && \cdot \ar@/_3pc/@{--}[lld] \ar@/^3pc/@{--}[rrd]  \ar[rrd]_u 
    \ar@{}[ll]|(0.6){\txt{$\sigma_1$}}
    \ar@{}[rr]|(0.6){\txt{$\sigma_2$}}
    \ar@{}[d]|(0.55){\txt{$\tau$}}&& \\
     \cdot \ar[rrrr]_u \ar[rru]_u &&&& \cdot
  }$$
  To formalise this, consider the diagram
  $$\xymatrix @C3.5pc@R3.5pc {
     X_{r+r}\drpullback \ar[r]_{{d_1}^{r-1}}\ar[dd]_{{d_\top}^r}     \ar@/^1.2pc/[rr]^{{d_\bot}^r} & X_{r+1}\drpullback \ar[d]^{{d_2}^{r-1}} 
     \ar@/_1.2pc/[dd]_{{d_\top}^r}
     \ar[r]_{d_\bot} & X_r \ar[d]^{g={d_1}^{r-1}} \\
     & X_2 \ar[d]^{d_\top} \ar[r]_{d_\bot} & X_1 \\
     X_r \ar[r]_{g={d_1}^{r-1}} & X_1 .
  }$$
  The two squares are pullbacks since $X$ is a decomposition space, and the 
  triangles are simplicial identities.  In the
  right-hand square we have $\tau\in X_2$ and $\sigma_2\in X_r$,  with 
  $d_\bot \tau = u = g\sigma_2$.  Hence we get a simplex $\rho\in X_{r+1}$.  This simplex has
  ${d_\top}^r \rho =d_\top\tau= u$, which means that in the left-hand square it matches
  $\sigma_1\in X_r$, to produce altogether the desired simplex
  $\sigma_1\#\sigma_2\in X_{r+r}$.  By construction, this simplex belongs to
  $\nondeg X_{r+r}$: indeed, its first $r$ principal edges are the principal 
  edges of $\sigma_1$, and its last $r$ principal edges are those of $\sigma_2$.
  Its long edge is clearly the long edge of $\tau$, namely $u$
  again, so we have produced a longer decomposition of $u$ than the one
  given by $\sigma$, thus contradicting the finite length of $u$.
\end{proof}

Alternative characterisations of the length of an edge in a
\FILT decomposition space can now be given:
\begin{prop}\label{prop:alt-length}
  Let $X$ be a \FILT decomposition space, and $f\in X_1$.  Then the following
  conditions on $r\in \N$ are equivalent:
  \begin{enumerate}
  \item For all words $w$ in the alphabet $\{0,a\}$
    with $|w|_a\geq r+1$ (that is, the letter $a$ occurs at least $r+1$ times in $w$), the fibre $(X_w)_f$ is empty,
    $$\xymatrix{
       \varnothing \drpullback \ar[r]\ar[d] & X_w \ar[d] \\
       1 \ar[r]_{\name f} & X_1 .
    }$$
    \item For all $k\geq r+1$, the fibre $(\nondeg X_k)_f$ is empty.
    \item The fibre $(\nondeg X_{r+1})_f$ is empty.
  \end{enumerate}
  The length $\ell(f)$ of an edge in a \FILT decomposition space is the least
  $r\in \N$ satisfying these equivalent conditions.
\end{prop}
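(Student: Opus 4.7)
The plan is to establish the chain of implications $(1) \Rightarrow (2) \Rightarrow (3) \Rightarrow (2) \Rightarrow (1)$, with the two substantive steps being $(3) \Rightarrow (2)$ (requiring tightness) and $(2) \Rightarrow (1)$ (requiring splitness, which follows from tightness). The implications $(1) \Rightarrow (2)$ and $(2) \Rightarrow (3)$ are immediate: taking $w = a^k = \underbrace{a\cdots a}_k$ in (1) gives $X_w = \nondeg X_k$, so (1) implies $(\nondeg X_k)_f = \varnothing$ for all $k \geq r+1$, which is (2); and (3) is just (2) with $k=r+1$.

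For $(3) \Rightarrow (2)$, I would argue by contradiction. Suppose $(\nondeg X_{r+1})_f = \varnothing$ but there exists $m \geq r+2$ with some $\tau \in (\nondeg X_m)_f$. Pick such $\tau$ of minimal dimension $m$ and consider $\tau' := d_1 \tau \in X_{m-1}$. By Corollary~\ref{prop:tight=>split} the space $X$ is split, and by Lemma~\ref{lem:tight=>split} every face of an effective simplex is again effective, so $\tau' \in \nondeg X_{m-1}$. Moreover $d_1$ is an inner face map, so it preserves the long edge: $g(\tau') = g(\tau) = f$. Hence $\tau' \in (\nondeg X_{m-1})_f$ with $m-1 \geq r+1$, contradicting either the minimality of $m$ (if $m-1 > r+1$) or the assumption (if $m-1 = r+1$).

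For $(2) \Rightarrow (1)$, let $w$ be a word with $m := |w|_a \geq r+1$. Since $X$ is split, Proposition~\ref{halfdecomp}(5) identifies $X_w$ with the image of an iterated degeneracy $s_{j_k - 1} \cdots s_{j_1 - 1} : \nondeg X_m \to X_n$, where $n = |w|$ and the $j_i$ record the positions of the letter $0$ in $w$. Each degeneracy map $s_i$ duplicates an internal vertex without affecting the first or last vertex, so it preserves long edges; hence the induced equivalence $X_w \simeq \nondeg X_m$ restricts to an equivalence $(X_w)_f \simeq (\nondeg X_m)_f$ on fibres over $f$. By (2), the right-hand side is empty, so $(X_w)_f = \varnothing$, which is (1).

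The final claim about $\ell(f)$ is then a direct unwinding of definitions: by~\ref{length}, $\ell(f)$ is the supremum of dimensions of effective simplices with long edge $f$, which in the tight case is finite and equals precisely the largest $k$ with $(\nondeg X_k)_f$ nonempty; this is visibly the least $r \in \N$ for which condition (2) holds, and hence by the equivalence just established the least $r$ satisfying any of (1)--(3). The main obstacle in the proof is step $(3) \Rightarrow (2)$, where it is essential that tightness (and not just completeness) is in force, so that inner faces of effective simplices remain effective; in a merely complete decomposition space this need not hold, and the argument would break down.
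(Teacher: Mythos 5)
Your proof is correct and uses exactly the same two ingredients as the paper's: Lemma~\ref{lem:tight=>split} (faces of effective simplices remain effective, so applying inner face maps $d_1$ reduces dimension while preserving the long edge and effectiveness) and the degeneracy decomposition of Proposition~\ref{halfdecomp}(5)/Corollary~\ref{cor:Xn} (so that $(X_w)_f\simeq(\nondeg X_{|w|_a})_f$ since degeneracies preserve long edges). The only difference is cosmetic routing — you split the paper's single step $(3)\Rightarrow(1)$ into $(3)\Rightarrow(2)$ by a minimality argument and $(2)\Rightarrow(1)$ separately, whereas the paper applies ${d_1}^{k-r-1}$ in one go.
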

\begin{proof}
  Clearly $(1)\Rightarrow(2)\Rightarrow(3)$ and, by definition, the length of
  $f$ is the least integer $r$ satisfying $(2)$.  It remains to show that $(3)$
  implies $(1)$.  Suppose (1) is false, that is, we have $w\in\{0,a\}^n$ with
  $k\geq r+1$ occurrences of $a$ and an element $\sigma\in X_w$ with
  $g(\sigma)=f$.  Then by Corollary~\ref{cor:Xn} we know that $\sigma$ is
  an $(n-k)$-fold degeneracy of some $\tau\in\nondeg X_{k}$, and $\sigma$ and
  $\tau$ will have the same long edge $f$.  Finally we see that (3) is false by
  considering the element ${d_1}^{k-r-1}\tau \in X_{r+1}$, which has long edge
  $f$, and is nondegenerate (and hence effective)  since $\tau$ is and
  face maps preserve nondegenerate simplices (as $X$ is split by 
  Proposition~\ref{prop:tight=>split}).
\end{proof}

\begin{blanko}{The length filtration of the space of $1$-simplices.}
  Let $X$ be a \FILT decomposition space.
  We define the $k$th stage of the {\em length filtration} for $1$-simplices to
  consist of all the edges of length at most $k$:
  $$
  X_1^{(k)} := \{ f \in X_1 \mid \ell(f)\leq k\}.
  $$

  Then $X_1^{(k)}$ is
  the full sub-$\infty$-groupoid of $X_1$ given by any of the following equivalent
  definitions:
  \begin{enumerate}
  \item 
  the complement of $\Im( \nondeg X_{k+1} \to X_1)$.
  \item 
  the complement of $\Im( \coprod_{|w|_a>k} X_w \to X_1)$.
  \item
  the full sub-$\infty$-groupoid of $X_1$ whose objects $f$ satisfy 
  $(X_{k+1})_f \subset \bigcup s_i X_k$
  \item
  the full sub-$\infty$-groupoid of $X_1$ whose objects $f$ satisfy 
  $(\nondeg X_{k+1})_f=\varnothing$
  \item
  the full sub-$\infty$-groupoid of $X_1$ whose objects $f$ satisfy 
  $(X_w)_f=\varnothing$ for all $w\in\{0,a\}^{r}$ such that $|w|_a> k$
  \end{enumerate}
\end{blanko}

It is clear from the definition of length that we have a sequence of 
monomorphisms
$$
X_1^{(0)} \into X_1^{(1)} \into X_1^{(2)} \into \dots \into X_1.
$$
The following is now clear.
\begin{prop}
  A complete decomposition space is \FILT if and only if the $X_1^{(k)}$ 
  constitute a filtration, i.e.
  $$
  X_1 = \bigcup_{k=0}^\infty X_1^{(k)} .
  $$
\end{prop}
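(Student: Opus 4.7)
The plan is to unpack both directions directly from the definitions of length and of $X_1^{(k)}$. By definition, $X_1^{(k)}$ is the full subgroupoid of $X_1$ on those arrows $a$ with $\ell(a)\leq k$. Hence an object $a\in X_1$ lies in the union $\bigcup_{k\geq 0} X_1^{(k)}$ if and only if there exists some $k\in\N$ with $\ell(a)\leq k$, which is precisely to say that $\ell(a)<\infty$.

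On the other hand, $X$ is \FILT by definition when every arrow has finite length. Combining this with the previous observation, the equality $X_1 = \bigcup_{k\geq 0} X_1^{(k)}$ holds if and only if every arrow has finite length, which is exactly the tightness condition. This gives both implications simultaneously.

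The only genuine issue to address is the meaning of the union at the $\infty$-groupoid level. Each inclusion $X_1^{(k)} \hookrightarrow X_1$ is a monomorphism (being a full subgroupoid inclusion, and hence, by the discussion in \ref{def:mono}, an equivalence onto a union of connected components), and the maps $X_1^{(k)} \hookrightarrow X_1^{(k+1)}$ assemble into an increasing sequence of such subgroupoids. Their colimit in $\Grpd$ is again a monomorphism into $X_1$ whose image consists of exactly those components hit at some finite stage. The equality $X_1 = \bigcup_k X_1^{(k)}$ thus amounts to the statement that every connected component of $X_1$ meets some $X_1^{(k)}$, and since $\ell$ is invariant under equivalence of arrows, this is equivalent to every arrow having finite length.

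I do not expect any serious obstacle: the proposition is essentially a restatement of the definition of \FILT in terms of the filtration, once one has in hand the descriptions of $X_1^{(k)}$ given just before the statement. The only care needed is the verification that the naive ``union'' of full subgroupoids is well behaved, which is immediate from the fact that full subgroupoid inclusions into $X_1$ are monomorphisms of $\infty$-groupoids and closed under filtered colimits of such.
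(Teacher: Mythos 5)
Your proof is correct and is essentially the argument the paper has in mind: the paper states this proposition without proof (``The following is now clear''), precisely because, as you observe, $\bigcup_k X_1^{(k)}$ is by definition the full subgroupoid of arrows of finite length, so the equality with $X_1$ is a restatement of tightness. Your extra care about the meaning of the union of full subgroupoids as a filtered colimit of monomorphisms is sound and harmless, though the paper treats it as implicit.
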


\begin{blanko}{Length filtration of a \FILT decomposition space.}\label{def:filt}
  Now define the length filtration for all of $X$:
  the length of a simplex $\sigma$ with longest edge $g\sigma=a$ is defined to 
  be the length of $a$:
  $$
  \ell(\sigma) := \ell(a) .
  $$
  In other words, we are defining the filtration in $X_r$ by pulling it back 
  from $X_1$ along the unique active map $X_r \to X_1$.
  This automatically defines the active maps in each filtration degree, 
  yielding an active-map complex
$$
X_\bullet^{(k)} : \Deltagen\op\to\Grpd .
$$

To get the outer face maps, the idea is simply to restrict (since by construction
all the maps $X_1^{(k)}\into X_1^{(k+1)}$ are monos).  We need to check that
an outer face map 
applied to a simplex in $X_n^{(k)}$ again belongs to $X_{n-1}^{(k)}$.
This will be the content of Proposition~\ref{prop:dpresnondeg} below.
Once we have done that, it is clear that we have a sequence of
\culf maps
$$
X_\bullet^{(0)} \into X_\bullet^{(1)} \into \cdots \into X
$$
and we shall see that $X_\bullet^{(0)}$ is the constant simplicial space $X_0$.
\end{blanko}

\begin{prop}\label{prop:dpresnondeg}
  In a tight decomposition space $X$, face maps preserve length: 
  precisely, for any face map $d: X_{n+1} \to X_n$,
  if $\sigma\in X_{n+1}^{(k)}$, then $d\sigma \in X_n^{(k)}$.
\end{prop}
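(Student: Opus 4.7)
The plan is to distinguish inner and outer face maps: inner face maps preserve the long edge (hence length) trivially, while outer face maps require a gluing argument via the decomposition-space axiom.

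For an inner face map $d_i:X_{n+1}\to X_n$ with $1\le i\le n$, the coface $\delta^i:[n]\to[n+1]$ is end-point preserving, so its composite with the generic map $[1]\to[n]$ is again the generic map $[1]\to[n+1]$. Hence $g\circ d_i=g$, and $\ell(d_i\sigma)=\ell(\sigma)$ immediately. By symmetry of the outer cases, it suffices to treat $d=d_\bot=d_0$. Write $f:=g(\sigma)$, $f':=g(d_0\sigma)$, and let $e:=d_\top^n\sigma\in X_1$ denote the first principal edge of $\sigma$. Assuming $\ell(f)\le k$, I aim to show $\ell(f')\le k$; so suppose for contradiction $\ell(f')>k$, so by Definition~\ref{length} there exists $\tau\in\nondeg X_{k+1}$ with $g(\tau)=f'$. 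Extract from $\sigma$ the $2$-simplex $\tau_1:=d_2^{n-1}\sigma\in X_2$ living on the vertex set $\{0,1,n+1\}$ of $\sigma$, which by inspection satisfies $d_1\tau_1=f$, $d_0\tau_1=f'$, and $d_2\tau_1=e$.

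The heart of the argument is to glue $\tau_1$ and $\tau$ along their common edge $f'$. To do so, apply the decomposition-space axiom to the generic-free pushout $[k+2]=[2]\sqcup_{[1]}[k+1]$ in $\Delta$, whose generic leg $[1]\to[k+1]$ picks out the long edge and whose free leg $[1]\to[2]$ is the coface $\delta^0$. This yields a pullback $X_{k+2}\simeq X_2\times_{X_1}X_{k+1}$ (along $d_0$ and $g$), producing a simplex $\rho\in X_{k+2}$ with long edge $f$, first principal edge $e$, and remaining $k+1$ principal edges those of $\tau$. A case analysis on $e$ now concludes the argument. If $e$ is nondegenerate, then $\rho\in\nondeg X_{k+2}$ with $g(\rho)=f$, contradicting $\ell(f)\le k$. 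If instead $e$ is degenerate, then Proposition~\ref{degen-w} applied to $\tau_1$ (whose first principal edge $d_2\tau_1=e$ is degenerate) gives $\tau_1=s_0 d_1\tau_1=s_0 f$, whence $f'=d_0\tau_1=d_0 s_0 f=f$ and hence $\ell(f')=\ell(f)\le k$, again contradicting our supposition. I expect the main subtlety not to be the gluing itself---which is standard once one identifies the correct generic-free pushout---but rather this degenerate subcase: one has to notice that when the short edge $e$ collapses to a degeneracy, the long edges of $\sigma$ and $d_0\sigma$ are forced to coincide, so the length bound transfers automatically without any gluing.
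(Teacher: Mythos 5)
Your proof is correct and follows essentially the same route as the paper's: reduce to an outer face map, extract the triangle on the vertices $\{0,1,n+1\}$, and use the decomposition-space pullback $X_{k+2}\simeq X_2\times_{X_1}X_{k+1}$ to graft a long decomposition of the short edge onto one of the long edge $f$. The only real difference is how the possibly-degenerate appended edge $e$ is handled: you split into cases and dispose of the degenerate case via Proposition~\ref{degen-w} (forcing $f'=f$), whereas the paper avoids the case split by invoking the word-characterisation of length in Proposition~\ref{prop:alt-length}(1), which counts only the letters $a$ and is therefore insensitive to the status of the extra edge.
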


\begin{proof}
  Since the length of a simplex only refers only to its long
  edge, and since an active face map does not
  alter the long edge, it is enough to treat the case of
  outer face maps, and by symmetry it is enough to treat
  the case of $d_\top$.
  Let $f$ denote the long edge of $\sigma$.  
  Let $\tau$ denote the triangle ${d_1}^{n-1}\sigma$.
  It has long 
  edge $f$ again.  Let $u$ and $v$ denote the short edges of $\tau$,
  \begin{equation*}
    \xymatrixrowsep{10pt}
  \xymatrixcolsep{24pt}
\xymatrix @!=0pt {
  & \cdot \ar[rddd]^v & \\
  &&\\
  & \tau &\\
  \cdot \ar[ruuu]^u \ar[rr]_f && \cdot
  }
  \end{equation*}
  that is $v= d_\bot \tau = {d_\bot}^n \sigma$
  and $u= d_\top \tau$, the long edge of $d_\top \sigma$.
  The claim is that if $\ell(f) \leq k$, then $\ell (u)\leq k$.
  If we were in the category case, this would be true since any
  decomposition of $u$ could be turned into a decomposition of $f$
  of at least the same length, simply by postcomposing with $v$.
  In the general case, we have to invoke the decomposition-space
  condition to glue with $\tau$ along $u$.  Precisely, for any 
  simplex $\kappa \in X_w$ with long edge $u$ we can obtain a 
  simplex $\kappa \#_u \tau \in X_{w1}$
  with long edge $f$: since $X$ is a decomposition space,
  we have a pullback square
  $$\xymatrix@C=9pt{
  \kappa \#_u \tau \ar@{}[r]|\in &X_{w1} \drpullback \ar[rr] \ar[d] && X_w \ar[d]^g 
    & \ar@{}[l]|(0.4)\ni \kappa\\
  \tau \ar@{}[r]|\in & X_2 \ar[rr]_{d_\top} \ar[d]_{d_1} && X_1 & 
  \ar@{}[l]|(0.4)\ni u\\
  f \ar@{}[r]|\in &X_1}
  $$
  and $d_\top \tau = u = g(\kappa)$, giving us the desired simplex in
  $X_{w1}$.  With this construction, any simplex $\kappa$ of 
  length $>k$ violating $\ell(u)=k$ 
  (cf.~the characterisation of length given in (1) of 
  Proposition~\ref{prop:alt-length})
  would also yield a simplex $\kappa \#_u \tau$ (of at least the
  same length) violating $\ell(f) =k$.
\end{proof}

The following is an immediate consequence of
Proposition~\ref{prop:long-edge-degen}.
\begin{cor}
  For a tight decomposition space $X$ we have $X_n^{(0)} = X_0$ for all $n$.
  \qed
\end{cor}

\begin{blanko}{Coalgebra filtration.}\label{coalgebrafiltGrpd}
  If $X$ is a \FILT decomposition space, the sequence of
  \culf maps
  $$
  X_\bullet^{(0)} \into X_\bullet^{(1)} \into \cdots \into X 
  $$
  defines coalgebra homomorphisms
  $$
  \Grpd_{/X_1^{(0)}} \to \Grpd_{/X_1^{(1)}} \to \cdots \to \Grpd_{/X_1} 
  $$
  which clearly define a coalgebra filtration of $\Grpd_{/X_1}$.
  
  Recall that a filtered coalgebra is called connected if its $0$-stage
  coalgebra is the trivial coalgebra (the ground ring).  In the present
  situation the $0$-stage is $\Grpd_{/X_1^{(0)}} \simeq \Grpd_{/X_0}$, so we
  see that $\Grpd_{/X_1}$ is connected if and only if $X_0$ is contractible.
  
  On the other hand, the $0$-stage elements are precisely the 
  degenerate edges,
  which almost tautologically are group-like.  Hence the incidence coalgebra of
  a \FILT decomposition space will always have the property that the
  $0$-stage is spanned by group-like elements.  For some purposes, this property
  is nearly as good as being connected (cf.~\cite{Kock:1411.3098}, 
  \cite{Kock:1512.03027} for this 
  viewpoint in the context of renormalisation).
\end{blanko}

\begin{blanko}{Grading.}\label{grading}
  Given a $2$-simplex $\sigma\in X_2$ in a complete decomposition space $X$,
  it is clear that we have
  $$
  \ell(d_2\sigma) + \ell(d_0 \sigma) \leq \ell(d_1\sigma)
  $$
  generalising the case of a category, where $f=ab$  implies 
  $\ell(a)+\ell(b)\leq\ell(f)$.
  In particular, the following configuration of edges 
  illustrates that one does not in general have equality:
  $$
  \xymatrix @! @R=10pt @C=5pt {
  && \cdot \ar[rr] && \cdot \ar[rrd] && \\
  \cdot \ar[rru] \ar[rrrrrr]^f \ar[rrrd]_a &&&&&& \cdot \\
  &&& \cdot \ar[rrru]_b &&&
  }
  $$
  Provided none of the edges can be decomposed further, we have
  $\ell(f) = 3$, but $\ell(a) = \ell(b) =1$.
  For the same reason, the length filtration is not in general 
  a grading:  $\Delta(f)$ contains the term $a\tensor b$
  of degree splitting $1+1 < 3$.
  Nevertheless, it is actually common in examples of interest
  to have a grading: this happens when all maximal chains composing to a given 
  edge $f$ have the same length, $\ell(f)$.  
  Many examples from combinatorics have this property~\cite{GKT:ex}.
  
  The abstract formulation of the condition for the length filtration to
  be a grading is this:
  For every $k$-simplex $\sigma \in X_k$ with long edge $a$ and principal edges
  $e_1,\ldots,e_k$, we have
  $$
  \ell (a) = \ell(e_1) + \cdots + \ell(e_k) .
  $$
  Equivalently, for every $2$-simplex $\sigma\in X_2$
  with long edge $a$ and short edges
  $e_1,e_2$, we have
  $$
  \ell (a) = \ell(e_1) +\ell(e_2) .
  $$
  
  The length filtration is a grading if and only if the functor $\ell :X_1 \to 
  \N$ extends to a simplicial map to the nerve of the monoid $(\N,+)$ (this map is
  rarely \culf, though).  
  
  If $X$ is the nerve of a poset $P$, then the length filtration is a grading
  if and only if $P$ is {\em ranked}, i.e.~for any $x,y \in P$, every maximal chain
  from $x$ to $y$ has the same length~\cite{Stanley}.
\end{blanko}

\begin{blanko}{Example (continued).}
  The decomposition space $\GG$ of finite graphs (Example \ref{ex:graphsdecomp}) is graded
  by the number of vertices.
\end{blanko}

\section{Locally finite decomposition spaces}
\label{sec:findec}

In order to be able to take homotopy cardinality of the $\Grpd$-coalgebra obtained from a
decomposition space $X$ to get a coalgebra at the numerical level (vector
spaces), we need to impose certain finiteness conditions on $X$.  Firstly, just
for the coalgebra structure to have a homotopy cardinality, we need $X$ to be {\em
locally finite} (\ref{finitary}) but it is not necessary that $X$ be complete.
Secondly, in order for \M inversion to admit a homotopy cardinality,
what we need in addition is
precisely the filtration condition (which in turn assumes completeness).  We
shall define a {\em \M decomposition space} to be a locally finite \FILT
decomposition space (\ref{M}).

We begin with a few reminders on finiteness of $\infty$-groupoids.

\begin{blanko}{Finiteness conditions for $\infty$-groupoids.}\label{bl:finiteness}
  (Cf.~\cite{GKT:HLA}.)
  An $\infty$-groupoid $B$ is {\em locally finite} if at each base point $b$ the
  homotopy groups $\pi_i (B,b)$ are finite for $i\geq1$ and are trivial for $i$
  sufficiently large.  It is called {\em finite} if furthermore it has only
  finitely many components.
  We denote by $\grpd$ the $\infty$-category of finite $\infty$-groupoids.
 A map of $\infty$-groupoids is {\em finite} if its fibres are~\cite[\S3]{GKT:HLA}.
  The role of vector spaces is played by finite-$\infty$-groupoid slices $\grpd_{/B}$
  (where $B$ is a locally finite $\infty$-groupoid), while the role of 
  profinite-dimensional vector spaces is played by finite-presheaf $\infty$-categories
  $\grpd^B$.  Linear maps are given by spans of {\em finite type}, meaning $A \stackrel 
  p\leftarrow M \stackrel q\to B$ in which $p$ is a finite map.  Prolinear maps are given
  by spans of {\em profinite type}, where $q$ is a finite map.  
  Inside the $\infty$-category $\LIN$, we have
  two $\infty$-categories: $\ind\lin$ whose objects are the 
  finite-$\infty$-groupoid slices $\grpd_{/B}$ and whose mapping spaces are $\infty$-groupoids of 
  finite-type spans, and the $\infty$-category $\pro\lin$ whose objects are 
  finite-presheaf $\infty$-categories $\grpd^B$, and whose mapping spaces are $\infty$-groupoids of 
  profinite-type spans.
  
  We shall also need $\Grpd^{\relfin}_{/B}$, the full subcategory of
  $\Grpd_{/B}$ spanned by the finite maps $p:X\to B$, and $\grpd^B_{\finsup}$,
  the full subcategory of $\Grpd^B$ spanned by presheaves with finite values and
  finite support.  By the support of a presheaf $F: B \to \Grpd$ we mean the
  full sub-$\infty$-groupoid of $B$ spanned by the objects $b$ for which
  $F(b)\neq\varnothing$.
\end{blanko}

\begin{prop}[{Cf.~\cite[Proposition~4.3]{GKT:HLA}}]\label{finitetypespan}
  For a span $A \stackrel p\leftarrow M \stackrel q\to B$ of locally finite 
  $\infty$-groupoids,
the following are equivalent.
\begin{enumerate}
\item $p$ is finite.
\item The linear functor $F:= q\lowershriek \circ p\upperstar :
\Grpd_{/A}\to \Grpd_{/B}$ restricts to
$$
\grpd_{/A}\stackrel{p\upperstar}\longrightarrow 
\grpd_{/M}\stackrel{q\lowershriek}\longrightarrow \grpd_{/B}.
$$
\item The transpose $F^t := p\lowershriek\circ q\upperstar
: \Grpd_{/B} \to \Grpd_{/A}$
restricts to
$$
\Grpd^{\relfin}_{/B}
\stackrel{q\upperstar}\longrightarrow 
\Grpd^{\relfin}_{/M}\stackrel{p\lowershriek}\longrightarrow 
\Grpd^{\relfin}_{/A}.
$$
\item The dual functor $F^\vee: \Grpd^B \to \Grpd^A$ restricts to 
$$
\grpd^B
\to
\grpd^A.
$$
\item The dual of the transpose, $F^t{}^\vee :\Grpd^A \to \Grpd^B$ restricts to
$$
\grpd^A_{\finsup} 
\to
\grpd^B_{\finsup}.
$$
\end{enumerate}
\end{prop}

\begin{blanko}{(Homotopy) cardinality.}
  (Cf.~\cite{Baez-Dolan:finset-feynman,GKT:HLA})\label{card}
  The homotopy cardinality of a finite $\infty$-groupoid $B$ is
by definition
$$
\norm{B} := \sum_{b\in \pi_0 B} \prod_{i>0} \norm{\pi_i(B,b)}^{(-1)^i} .
$$
Here the norm signs on the right refer to order of homotopy groups.
From now on we will just say {\em cardinality} for homotopy 
cardinality.

For each locally finite $\infty$-groupoid $B$, there is a `relative' notion of cardinality
$$\norm{ \;\, } : \grpd_{/B} \longrightarrow \Q_{\pi_0 B},$$ sending 
a basis element $\name b$
to the basis element $\norm{\name b} := \delta_b$ corresponding 
to $b\in \pi_0 B$.
The delta notation for these basis elements is useful to keep track of
the level of discourse.

Dually, there is a notion of cardinality
$\norm { \;\, } : \grpd^B \to \Q^{\pi_0 B}$.
The profinite-dimensional vector space $\Q^{\pi_0 B}$
is spanned by the characteristic functions $\delta^b
=
\frac{\norm{h^b}}{\norm{\Omega(B,b)}}$, 
the cardinality of the representable functor $h^b$ divided by the
cardinality of the loop space.
\end{blanko}

\begin{blanko}{Locally finite decomposition spaces.}\label{finitary}
  A decomposition space $X:\simplexcategory\op \to \Grpd$ is called
  \emph{locally finite} if $X_1$ is locally finite and both 
  $s_0:X_0 \to X_1$ and $d_1:X_2 \to X_1$ are finite maps.
\end{blanko}

\begin{lemma}
  Let $X$ be a decomposition space.
  \begin{enumerate}
    \item If $s_0:X_0\to X_1$ is finite then so are all degeneracy maps $s_i:X_n\to X_{n+1}$.
    \item If $d_1:X_2\to X_1$  is finite then so are all active face maps 
          $d_j:X_{n}\to X_{n-1}$, $j\neq0,n$.
    \item  $X$ is locally finite if and only if $X_n$ is locally finite for every
      $n$ and $g:X_m\to X_n$ is finite for every active map $g:[n]\to[m]$ in $\simplexcategory$.
  \end{enumerate}
\end{lemma}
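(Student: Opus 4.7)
The plan is to reduce everything to the key structural fact recorded in Lemma~\ref{lem:s0d1}: for a decomposition space, every degeneracy $s_i\colon X_n \to X_{n+1}$ arises as a pullback of $s_0\colon X_0 \to X_1$, and every inner (generic) face map $d_j\colon X_n \to X_{n-1}$ arises as a pullback of $d_1\colon X_2 \to X_1$. Combined with two standard facts about maps of $\infty$-groupoids---finite maps are stable under pullback and under composition, and a finite map into a locally finite base has a locally finite source---this will dispatch all three parts essentially by bookkeeping.

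Parts (1) and (2) will fall out immediately from Lemma~\ref{lem:s0d1}: pullbacks of finite maps are finite. For part (3), the ``if'' direction is trivial, since $s_0$ and $d_1$ are themselves among the generic maps in question and $X_1$ is required locally finite.

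For the ``only if'' direction, I would first invoke (1) and (2) to conclude that every degeneracy $s_i$ and every inner face map $d_j$ is finite. Recalling from \ref{generic-and-free} that generic maps in $\Delta$ are generated under composition by codegeneracies and inner cofaces, it then follows that every $g^*\colon X_m \to X_n$ induced by a generic map $g\colon[n]\to[m]$ is a composition of finite maps, hence finite. For local finiteness of $X_n$ itself, I would apply the ``long-edge'' generic map $[1]\to[n]$: this yields a finite map $X_n \to X_1$ for every $n\geq 1$, and since $X_1$ is locally finite, so is $X_n$. The edge case $n=0$ is handled separately by the simplicial identity $d_1\circ s_0 = \id_{X_0}$, which exhibits $X_0$ as a retract of $X_1$; a retract of a locally finite $\infty$-groupoid is locally finite because the homotopy groups of a retract embed into those of the ambient space.

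I do not anticipate a genuine obstacle. The only points demanding care are verifying the pullback- and composition-stability of finite maps in the $\infty$-groupoid setting (routine from \cite{GKT:HLA}), and noticing that the $n=0$ case must be treated via the retract $d_1\circ s_0 = \id$ rather than via a generic map to $X_1$.
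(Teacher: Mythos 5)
Your proof is correct and follows essentially the same route as the paper: parts (1) and (2) from Lemma~\ref{lem:s0d1} plus pullback-stability of finite maps, and part (3) by composing finite generic maps and using that a finite map into a locally finite $\infty$-groupoid has locally finite source. Your separate retract argument for $n=0$ is fine but unnecessary: the unique generic map $X_0\to X_1$ is $s_0$ itself, which is finite by hypothesis, so the same ``finite over locally finite base'' lemma applies uniformly to all $n$.
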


\begin{proof}
  Since finite maps are stable under pullback \cite[Lemma 3.13]{GKT:HLA},
  both (1) and (2) follow from Lemma~\ref{lem:s0d1}.
  
  Re (3): If $X$ is locally finite, then by definition $X_1$ is locally finite, and 
  for each $n\in \N$ the unique active map $X_n \to X_1$ is finite by (1) or 
  (2).  It follows that $X_n$ is locally finite \cite[Lemma~3.15]{GKT:HLA}.
  The converse implication is trivial.
\end{proof}

\begin{blanko}{Remark.}
  If $X$ is the nerve of a poset $P$, then it is locally finite in the above
  sense if and only if it is locally finite in the usual sense of 
  posets~\cite{Stanley},
  viz.~for every $x, y\in P$, the interval $[x,y]$ is finite.  The points in
  this interval parametrise precisely the two-step factorisations of the unique 
  arrow $x\to y$, so this condition amounts to $X_2 \to X_1$ having finite 
  fibre over $x \to y$.  (The condition $X_1$ locally finite is void in this 
  case, as any discrete set is locally finite;  the condition on $s_0 : X_0 \to 
  X_1$ is also void in this case, as it is always just an inclusion.)
  
  For posets,
  `locally finite' implies `locally finite length'.  (The converse is not true:
  take an infinite set, considered as a discrete poset, and adjoin a top and a 
  bottom element: the result is of locally finite length but not locally finite.)
  Already for categories, it is not true that locally finite implies locally
  finite length: for example the strict nerve of a finite group is locally
  finite but not of locally finite length.
\end{blanko}

\begin{blanko}{Example (continued).}\label{ex:graphslocfin}
  The decomposition space $\GG$ of finite graphs~(Example~\ref{ex:graphsdecomp}) is locally
  finite.  Indeed, $\GG_1$ is locally finite since a finite graph has finite
  automorphism group; the map $s_0: \GG_0 \to \GG_1$ is finite since it is a
  monomorphism (see Example~\ref{ex:graphscomplete}), and $d_1: \GG_2 \to \GG_1$ is finite
  since a given graph admits only finitely many different $2$-layerings.  (For
  similar reasons, many other examples of decomposition spaces of combinatorial
  nature are locally finite~\cite{GKT:ex}.)
\end{blanko}

\begin{blanko}{Numerical incidence algebra.}
  It follows from Proposition~\ref{finitetypespan} that, for any locally finite decomposition space
  $X$, the comultiplication maps
  \begin{eqnarray*}
    \Delta_n: \Grpd_{/ X_1} & \longrightarrow & 
    \Grpd_{/ X_1\times X_1\times \dots\times X_1  } 
  \end{eqnarray*}
  given for  $n\geq 0$ by the spans
  $$
  \xymatrix@R-15pt{
   X_1  & \ar[l]-_{m}  X_n\ar[r]^-{p} &  X_1\times X_1\times \dots\times X_1  
  }
  $$
  restrict to linear functors
  \begin{eqnarray*}
    \Delta_n: \grpd_{/ X_1} & \longrightarrow & 
    \grpd_{/ X_1\times X_1\times \dots\times X_1  } .
  \end{eqnarray*}
  The linear functors $\Delta_2$ and $\Delta_0$ are just the comultiplication $\Delta$ and the counit $\varepsilon$ of Theorem \ref{thm:comultcoass},
$$
\grpd_{/X_1}
\stackrel{\Delta}\rTo 
\grpd_{/X_1\times X_1}
,\qquad\qquad\qquad
\grpd_{/X_1}
\stackrel{\varepsilon}\rTo
\grpd.
$$
and we can take their cardinality to obtain a coalgebra structure,
$$
\Q_{\pi_0 X_1}
\stackrel{\norm{\Delta}}\rTo \Q_{\pi_0 X_1}\tensor\Q_{\pi_0 X_1} 
,\qquad\qquad
\Q_{\pi_0 X_1}
\stackrel{\norm{\varepsilon}}
\rTo
\Q\qquad
$$
termed the \emph{numerical incidence coalgebra} of $X$.
\end{blanko}

\begin{blanko}{Morphisms.}\label{rmk:morphisms}
  It is worth noticing that for {\em any} \culf functor $F: Y \to X$ between
  locally finite decomposition spaces,
  the induced coalgebra homomorphism $F\lowershriek: \Grpd_{/Y_1}\to
  \Grpd_{/X_1}$ restricts to a functor $\grpd_{/Y_1} \to \grpd_{/X_1}$.  In other
  words, there are no further finiteness conditions to impose on \culf 
  functors.
\end{blanko}

\begin{blanko}{Numerical convolution product.}
  By duality, if $X$ is locally finite, the convolution
  product descends to the profinite-dimensional vector space $\Q^{\pi_0 X_1}$
  obtained by taking cardinality of $\grpd^{X_1}$.  It follows from the general
  theory of homotopy linear algebra (see \cite{GKT:HLA}) that the cardinality of
  the convolution product is the linear dual of the cardinality of the 
  comultiplication.  Since it is the same span that defines the comultiplication
  and the convolution product, it is also the exact same matrix
  that defines the cardinalities of these two maps.
  It follows that the structure constants for the convolution product (with 
  respect to the pro-basis $\{\delta^x\}$) are the same as the structure 
  constants for the comultiplication (with respect to the basis
  $\{\delta_x\}$).
  These are classically called the section coefficients, and we proceed to
  derive formulae for them in simple cases.
\end{blanko}

\label{sec:tioncoeff}
Let $X$ be a locally finite decomposition space.
The comultiplication at the objective level
\begin{eqnarray*}
  \grpd_{/X_1} & \longrightarrow & \grpd_{/X_1\times X_1}  \\
  \name f & \longmapsto & \big[ R_f : (X_2)_f \to X_2 \to X_1 \times X_1 \big]
\end{eqnarray*}
yields a comultiplication of vector spaces by
taking cardinality (remembering that $\norm{\name f} = \delta_f$):
\begin{eqnarray*}
\Q_{\pi_0 X_1} & \longrightarrow & \Q_{\pi_0 X_1} \tensor \Q_{\pi_0 X_1} \\
\delta_f & \longmapsto & \norm{R_f} \\
&=& 
\int^{(a,b)\in X_1\times X_1} \norm{(X_2)_{f,a,b}} \delta_a \tensor \delta_b \\
&=& \sum_{a,b}  \norm{(X_1)_{[a]}}\norm{(X_1)_{[b]}} \norm{(X_2)_{f,a,b}} 
\delta_a \tensor \delta_b.
\end{eqnarray*}
where $(X_2)_{f,a,b}$ is the fibre over the three face maps.
The integral sign is a sum weighted by homotopy groups.
These weights together with the cardinality of the triple fibre are called
the {\em section coefficients}, denoted
$$
c^f_{a,b} := \norm{(X_2)_{f,a,b}} \cdot 
\norm{(X_1)_{[a]}}\norm{(X_1)_{[b]}} ,
$$
that is, the numbers $c^f_{a,b}$ such that
$$
\Delta(\delta_f) = \sum_{a,b} c^f_{a,b} \; \delta_a \otimes \delta_b ,
$$
in the numerical incidence coalgebra of $X$.

\bigskip

In the case where $X$ is a Segal space (and even more, when $X_0$ is a 
$1$-groupoid), we can be very explicit about the section coefficients.

\begin{lemma}\label{lem:Omega(y)}
\ourmargin{Slightly rewritten}
  For $X$ a Segal space, the fibre of $X_2\to X_1\times X_1$ over $(a,b)$
  is naturally identified with $\Map_{X_0}(d_0 a,d_1 b)$. Precisely, we 
  have a pullback square
  $$
  \xymatrix{
  \Map_{X_0}(d_0 a,d_1 b) \drpullback \ar[r] \ar[d] & X_2 \ar[d]^-{(d_2,d_0)} \\
  1 \ar[r]_-{\name{(a,b)}}& X_1 \times X_1 .
  }
  $$
\end{lemma}
\begin{proof}
  We can compute the pullback as
  $$
  \xymatrix{
  \Map_{X_0}(d_0 a,d_1 b) \drpullback \ar[r] \ar[d] & X_2 \drpullback\ar[d]_{(d_2,d_0)}  
  \ar[r]^-{d_0\circ d_2} & X_0 \ar[d]^{\text{diag}}\\
1 \ar[r]_-{\name{(a, b)}}
  & X_1 \times X_1 \ar[r]_-{d_0\times d_1} & X_0\times X_0 .
  }
  $$
  Indeed, the right-hand square is a pullback because $X$ is a Segal space
  (using the general fact that a square
  $$
  \xymatrix{
  A \ar[r]^g \ar[d]_f & C \ar[d]^k \\
  B \ar[r]_h& D
  }
  $$
  is a pullback if and only if
  $$
  \xymatrix{
  A \ar[r]^{k\circ g} \ar[d]_{(f,g)} & D \ar[d]^-{\text{diag}} \\
  B \times C \ar[r]_-{h\times k}& D\times D
  }
  $$
  is a pullback). Now the result follows since the fibre of the diagonal is
  the mapping space, as required.
\end{proof}

Note that we can express this in terms of loop spaces as
  $$
  \Map_{X_0}(d_0 a,d_1 b)
  \ \simeq \
  \begin{cases}
   \Omega(X_0,y) & \text{ if } d_0 a \simeq y \simeq d_1 b \\
   \varnothing
   & \text { else},
  \end{cases}
$$
but this is a non-canonical identification, and in the following it is
preferrable to stick with the mapping space.

\bigskip

Now suppose furthermore that $X_0$ is a $1$-groupoid,
so that the diagonal map $X_0\to X_0\times X_0$ is discrete,
and therefore the fibre $\Map_{X_0}(d_0 a,d_1 b)$ 
in Lemma~\ref{lem:Omega(y)} is discrete.
To find the section coefficient $c_{a,b}^f$, we
calculate the fibre
\begin{equation}\label{X2fab}
\xymatrix{
(X_2)_{f,a,b} \ar[r] \ar[d] \drpullback & \Map_{X_0}(d_0 a,d_1 b) \ar^{\ell}[d]
\\
1 \ar_{\name{f}}[r] & X_1 ,
}\end{equation}
over
$f\in X_1$ of the map
$$  \xymatrix{
\ell: \ \Map_{X_0}(d_0 a,d_1 b) \ar[r]  
& X_2\rto^-{d_1}&X_1}.
$$
(Note that if $X$ is the fat nerve of a category, so that $X_2 \cong X_1 
\times_{X_0} X_1$ and we have well-defined arrow composition, then $\ell(\varphi)$ can be
taken to be the composite $a \varphi b$ in the category.)

The important point is that $\Map_{X_0}(d_0 a,d_1 b)$ is discrete.
We can now apply the following general lemma.

\medskip

\noindent
{\bf Lemma.}
{\em For $D$ a discrete space, and $D_b$ defined as the fibre
$$\xymatrix{
D_b \ar[r] \ar[d] \drpullback & D \ar^{\ell}[d]
\\
1 \ar_{\name{b}}[r] & B 
}$$
we  have
$$
D_b \simeq \sum_{d\in D} \Map_B(b,\ell(d)) .
$$
}

\begin{proof}
   We compute $D_b$ as the sum of the fibres for 
the upper square in the diagram
$$\xymatrix{
D_{b,d} \drpullback \ar[d] \ar[r] & 1 \ar^{\name{d}}[d] \\
D_b \ar[r] \ar[d] \drpullback & D \ar^{\ell}[d]
\\
1 \ar_{\name{b}}[r] & B .
}$$
Since $D$ is discrete, we get
$$
D_b \simeq \sum_{d\in D} D_{b,d} \simeq \sum_{d\in D} \Map_B(b,\ell(d)) .
$$

\vspace*{-16pt}

\end{proof}

\medskip

Applying this to the situation of \eqref{X2fab} now yields

\begin{cor}\ourmargin{Corrected}
  Suppose $X$ is a Segal space, and that $X_0$ is a $1$-groupoid. 
  Given $a,b,f\in X_1$ 
  we have
  $$
  (X_2)_{f,a,b} \ \simeq \ \sum_{d_0 a\stackrel\varphi\to d_1 b} 
  \Map_{X_1}(f,\ell(\varphi))
  $$
\end{cor}

The mapping space depends only on whether $f$ and $\ell(\varphi)$ are
equivalent or not, in the space $X_1$. Precisely:
$$
\Map_{X_1}(f,\ell(\varphi)) \simeq \begin{cases} \Omega(X_1,f) & \text{ if 
$f\simeq \ell(\varphi)$} \\ \emptyset & \text{ else} \end{cases}
$$
So we need as many copies of the loop space as there are $\varphi$  giving 
$f$.  Taking cardinality, we therefore arrive at the following 
formula for the section coefficients.
 Recall that in the case of the fat 
nerve of a category, $\ell(\varphi) = a \varphi b$.

\begin{cor}\hspace*{-2mm}\footnote{We are grateful to Alex Cebrian for pointing out a
mistake in the published version of these two corollaries. He
also suggested the present corrected version together with a
proof, whose main argument he had already used in \cite{Cebrian}.}\label{sectioncoeff!}
  \ourmargin{Corrected}
  Suppose $X$ is the fat nerve of a category. Then the section coefficients
  of the incidence coalgebra
  are given by
  $$
  c^f_{a,b} = \frac{\norm{\Aut(f)} \cdot \norm{\{ \varphi \mid a \varphi b \simeq f \}
}}{ \norm{\Aut(a)} \cdot 
  \norm{\Aut(b)}} .
  $$
Here $\{ \varphi \mid a \varphi b \simeq f \}$ is the {\em set} 
consisting of those $\varphi : d_0 a \isopil 
  d_1 b$ for which there exists an isomorphism $a\varphi b \simeq f$.
\end{cor}

\noindent
{\bf Example.} \ourmargin{New}
Consider the fat nerve of the category of finite sets and surjections,
whose incidence bialgebra is the Fa\`a di Bruno 
bialgebra~\cite{GalvezCarrillo-Kock-Tonks:1207.6404,GKT:ex}.
Consider $f: \un 5 \to \un 2$, \
$a:\un 5\to \un 3$, and $b:\un 3\to \un 2$ given as

%
%
%
%
%
%
%

\begin{center}
  \begin{tikzpicture}[line width=0.25mm]
	\draw (0.0, 1.575);
	\draw (0.0, -0.525);
	\begin{scope}[shift={(0.0, 0.0)}]
	  \fill (0.0, 1.4) circle[radius=0.065];
	  \draw (0.0, 1.4) -- (1.05, 0.875);
	  \fill (1.05, 0.875) circle[radius=0.065];
	  \fill (0.0, 1.05) circle[radius=0.065];
	  \draw (0.0, 1.05) -- (1.05, 0.875); 
	  \fill (0.0, 0.7) circle[radius=0.065];
	  \draw (0.0, 0.7) -- (1.05, 0.875);
	  \fill (0.0, 0.35) circle[radius=0.065];
	  \draw (0.0, 0.35) -- (1.05, 0.525);
	  \fill (1.05, 0.525) circle[radius=0.065];
	  \fill (0.0, 0.0) circle[radius=0.065];
	  \draw (0.0, 0.0) -- (1.05, 0.525); 
	  \draw (0.525, -0.35) node {$f$};
	\end{scope}
 
   \draw (1.925, 0.7) node {$=$};
   
   	\begin{scope}[shift={(2.8, 0.0)}]
	  \fill (0.0, 1.4) circle[radius=0.065];
	  \draw (0.0, 1.4) -- (1.05, 1.05);
	  \fill (1.05, 1.05) circle[radius=0.065];
	  \fill (0.0, 1.05) circle[radius=0.065];
	  \draw (0.0, 1.05) -- (1.05, 0.7);
	  \fill (1.05, 0.7) circle[radius=0.065];
	  \fill (0.0, 0.7) circle[radius=0.065];
	  \draw (0.0, 0.7) -- (1.05, 0.7);
	  \fill (0.0, 0.35) circle[radius=0.065];
	  \draw (0.0, 0.35) -- (1.05, 0.35);
	  \fill (1.05, 0.35) circle[radius=0.065];
	  \fill (0.0, 0.0) circle[radius=0.065];
	  \draw (0.0, 0.0) -- (1.05, 0.35); 
	  \draw (0.525, -0.35) node {$a$};
	  \fill (2.1, 1.05) circle[radius=0.065];
	  \draw (2.1, 1.05) -- (3.15, 0.875);
	  \fill (3.15, 0.875) circle[radius=0.065];
	  \fill (2.1, 0.7) circle[radius=0.065];
	  \draw (2.1, 0.7) -- (3.15, 0.875); 
	  \fill (2.1, 0.35) circle[radius=0.065];
	  \draw (2.1, 0.35) -- (3.15, 0.525);
	  \fill (3.15, 0.525) circle[radius=0.065];
	  \draw (2.625, -0.35) node {$b$};
	  \begin{scope}[line width=0.2mm]
		\draw[dotted] (1.05, 1.05) -- (2.1, 1.05);
		\draw[dotted] (1.05, 1.05) -- (2.1, 0.7);
		\draw[dotted] (1.05, 1.05) -- (2.1, 0.35);
		\draw[dotted] (1.05, 0.7) -- (2.1, 1.05);
		\draw[dotted] (1.05, 0.7) -- (2.1, 0.7);
		\draw[dotted] (1.05, 0.7) -- (2.1, 0.35);
		\draw[dotted] (1.05, 0.35) -- (2.1, 1.05);
		\draw[dotted] (1.05, 0.35) -- (2.1, 0.7);
		\draw[dotted] (1.05, 0.35) -- (2.1, 0.35);
	  \end{scope}
	  \draw (1.575, -0.378) node {$\varphi$};
	\end{scope}
  \end{tikzpicture}	
\end{center}
\noindent
Among the six possible bijections $\varphi : \un 3 \simeq \un 3$ 
in the picture, only 
\
\begin{tikzpicture}
  \draw (0.336, 0.336) -- (0.672, 0.336);
  \draw (0.336, 0.224) -- (0.672, 0.224);
  \draw (0.336, 0.112) -- (0.672, 0.112);
\end{tikzpicture}
,
\begin{tikzpicture}
  \draw (0.336, 0.336) -- (0.672, 0.336);
  \draw (0.336, 0.224) -- (0.672, 0.112);
  \draw (0.336, 0.112) -- (0.672, 0.224);
\end{tikzpicture}
,
\begin{tikzpicture}
  \draw (0.336, 0.336) -- (0.672, 0.224);
  \draw (0.336, 0.224) -- (0.672, 0.336);
  \draw (0.336, 0.112) -- (0.672, 0.112);
\end{tikzpicture}
,
\begin{tikzpicture}
  \draw (0.336, 0.336) -- (0.672, 0.224);
  \draw (0.336, 0.224) -- (0.672, 0.112);
  \draw (0.336, 0.112) -- (0.672, 0.336);
\end{tikzpicture} \
give a composite isomorphic to $f$,
whereas  \
\begin{tikzpicture}
  \draw (0.336, 0.336) -- (0.672, 0.112);
  \draw (0.336, 0.224) -- (0.672, 0.336);
  \draw (0.336, 0.112) -- (0.672, 0.224);
\end{tikzpicture}
,
\begin{tikzpicture}
  \draw (0.336, 0.336) -- (0.672, 0.112);
  \draw (0.336, 0.224) -- (0.672, 0.224);
  \draw (0.336, 0.112) -- (0.672, 0.336);
\end{tikzpicture} \
do not.
Corollary~\ref{sectioncoeff!} now
gives the section coefficient
$$
c^f_{a,b} = 
\frac{  3!2! \cdot 4}{2!2!2! \cdot 2!} = 3 .
$$

\medskip

\ourmargin{Deleted\\ cocycle\\ discussion}

%
%

\begin{blanko}{`Zeroth section coefficients': the counit.}
 Let us also say a word about the zeroth section coefficients, i.e.~the
 computation of the counit: the main case is when $X$ is complete (in the sense
 that $s_0$ is a monomorphism).  In this case, clearly we have
  $$
  \varepsilon(f) = \begin{cases}
    1 & \text{ if $f$ degenerate } \\
    0 & \text{ else.}
  \end{cases}
  $$
  If $X$ is Rezk complete, the first condition is equivalent to being invertible.

  The other easy case is when $X_0\simeq1$.  In this case
  $$
  \varepsilon(f) = \begin{cases}
    |\Omega(X_1,f)| & \text{ if $f$ degenerate } \\
    0 & \text{ else.}
  \end{cases}
  $$
\end{blanko}

\begin{blanko}{Example.}
  \ourmargin{Corrected}
  The strict nerve of a $1$-category $\CC$ is a decomposition space which is
  discrete in each degree.  The resulting coalgebra at the numerical level
  (assuming the due finiteness conditions) is the coalgebra of
  Content--Lemay--Leroux~\cite{Content-Lemay-Leroux}, and if the category is
  just a poset, that of Rota et al.~\cite{JoniRotaMR544721}.

  For the fat nerve $X$ of $\CC$, we find
  \label{lem:h*h}
$$
h^a * h^b \simeq \sum_{d_0 a\stackrel\varphi\to d_1 b} h^{a\varphi b} ,
$$
as follows readily by dualising Corollary~\ref{sectioncoeff!}. This formula
has a clear intuitive content: to multiply two arrows in the `fat category
algebra', sum over all connecting isos $\varphi$ to make $a$ and $b$
composable, and return the triple composite. (It should be stressed here
that the cardinality of $h^a$ is not the basis element $\delta^a$ dual to
$\delta_a$. Rather $\norm{h^a} = \norm{\Omega(X_1,a)} \delta^a$. With this, at
the numerical level the convolution algebra has $\delta^a * \delta^b =
\sum_f c^f_{a,b} \, \delta^f$, so that the section coefficients are the
same as for the coalgebra, as expected. The subtleties regarding this
duality are carefully treated in \cite{GKT:HLA}.)
\end{blanko}

\begin{blanko}{Finite support.}\label{fin-supp}
  It is also interesting to consider the subalgebra of the incidence algebra
  consisting of functions with finite support, i.e.~the full subcategory
  $\grpd^{X_1}_{\finsup} \subset \grpd^{X_1}$, and numerically $\Q^{\pi_0
  X_1}_{\finsup} \subset \Q^{\pi_0 X_1}$.  Of course we have canonical
  identifications $\grpd^{X_1}_{\finsup} \simeq \grpd_{/X_1}$, as well as
  $\Q^{\pi_0 X_1}_{\finsup} \simeq \Q_{\pi_0 X_1}$, but it is important to keep
  track of which side of duality we are on.
  
  That the decomposition space is locally finite is not the appropriate condition
  for these subalgebras to
  exist.  Instead, for the convolution product to descend to
  functors with finite support, the requirement is that $X_1$ be locally finite and
  the functor
  $$
  X_2 \to X_1 \times X_1
  $$ be finite.  (This is always the case for a locally finite Segal $1$-groupoid, by 
  Lemma~\ref{lem:Omega(y)}.)
  Similarly, one can ask for the convolution unit to have finite support,
  which is to require $X_0 \to 1$ to be a finite map.
 
  Dually, the same conditions ensure that comultiplication and counit extend
  from $\grpd_{/X_1}$ to $\Grpd_{/X_1}^{\relfin}$, which numerically is
  some sort of vector space of summable infinite linear 
  combinations~\cite[6.8]{GKT:HLA}.
  An example of this situation is given by the bialgebra of 
  $P$-trees (actually $P$-forests)~\cite{Kock:1109.5785}, whose 
  comultiplication does extend to $\Grpd_{/X_1}^{\relfin}$, but 
  whose counit does not (as there are infinitely many $P$-forests without 
  nodes).  Importantly, this non-counital coalgebra is
  the home for the so-called Green function, an infinite (homotopy)
  sum of trees, and for the Fa\`a di Bruno formula it satisfies, which does not hold
  for any finite truncation.  See \cite{GalvezCarrillo-Kock-Tonks:1207.6404} for these 
  results.
\end{blanko}

\begin{blanko}{Examples.}\label{categoryalg}
  \ourmargin{Corrected}
  If $X$ is the strict nerve of a $1$-category $\CC$, then 
  the finite-support convolution algebra is precisely the {\em category algebra}
  of $\CC$.  (For a finite category, of course
  the two notions coincide.)  
  
  Note that the convolution unit is
  $$
  \varepsilon = \sum_x \delta^{\id_x} = \begin{cases}
    1 & \text{for id arrows} \\
    0 & \text{else,}
  \end{cases}
  $$
  the sum of all indicator functions of identity arrows, so it will be finite if
  and only if the category has only finitely many objects.
  
  In the case of the fat nerve of a $1$-category, the finiteness condition
  for comultiplication is
  implied by the condition that every object has a finite automorphism
  group (a condition implied by local finiteness).
  On the other hand,
  the convolution unit has finite support precisely when there is only a finite
  number of isoclasses of objects, already a more drastic condition.
  Note the `category algebra' interpretation: arrows are multiplied by
  summing over all ways they can be made strictly composable (cf.~\ref{lem:h*h}):
$$
h^a * h^b \simeq \sum_{d_0 a\stackrel\varphi\to d_1 b} h^{a\varphi b} .
$$
%
%
%
\end{blanko}

\section{\M decomposition spaces}

\label{sec:M}

We finally come to the M\"obius condition, which ensures
the M\"obius inversion principle descends to the numerical 
level.

Recall that $\grpd$ denotes the $\infty$-category of finite $\infty$-groupoids,
as defined in \ref{bl:finiteness}.

\begin{lemma}\label{lem:comp-finite}
  If $X$ is a complete decomposition space then the following conditions are
  equivalent
  \begin{enumerate}
  \item $d_1:X_2\to X_1$ is finite.

  \item $d_1:\nondeg X_2\to X_1$ is finite.

  \item $d_1^{r-1}:\nondeg X_r\to X_1$ is finite for all $r\geq 2$.
  \end{enumerate}
\end{lemma}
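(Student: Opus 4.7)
The plan is to establish the cycle $(1)\Rightarrow(3)\Rightarrow(2)\Rightarrow(1)$, since $(3)\Rightarrow(2)$ is immediate by taking $r=2$, and the remaining two implications can both be handled by exploiting the decomposition-space axiom together with two stability properties of finite maps: finite maps are stable under pullback and under composition, and monomorphisms are automatically finite (their fibres being either empty or contractible).

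For $(1)\Rightarrow(3)$, the key ingredient is Lemma~\ref{lem:s0d1}: in a decomposition space, every generic (inner) face map is a pullback of $d_1:X_2\to X_1$. Factor the unique generic map $g=d_1^{r-1}:X_r\to X_1$ as a composite of inner face maps
$$X_r \xrightarrow{d_{r-1}} X_{r-1} \xrightarrow{d_{r-2}} \cdots \xrightarrow{d_2} X_2 \xrightarrow{d_1} X_1.$$
Assuming $d_1:X_2\to X_1$ finite, each factor is finite by pullback-stability, so the composite $d_1^{r-1}:X_r\to X_1$ is finite. Finally, restrict along the inclusion $\nondeg X_r \hookrightarrow X_r$, which is a monomorphism (hence finite), and so precomposing keeps the map finite.

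For $(2)\Rightarrow(1)$, decompose $X_2$ using Corollary~\ref{cor:Xn}:
$$X_2 \;=\; \nondeg X_2 \;+\; s_0(\nondeg X_1) \;+\; s_1(\nondeg X_1) \;+\; s_0 s_0(X_0),$$
and check that $d_1$ is finite on each summand, whence finite on the sum. On $\nondeg X_2$ it is finite by hypothesis~(2). On $s_0(\nondeg X_1)$ and $s_1(\nondeg X_1)$ the simplicial identities $d_1 s_0=\id=d_1 s_1$ imply that $d_1$ restricts to an equivalence onto $\nondeg X_1\subset X_1$, hence is finite. On $s_0 s_0(X_0)$ the identity $d_1 s_0 s_0 = s_0$ identifies $d_1$ with $s_0:X_0\to X_1$, which is a monomorphism because $X$ is complete, hence again finite.

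The only real subtlety lies in justifying the bookkeeping in step $(2)\Rightarrow(1)$: one needs that the word-decomposition of $X_2$ from Corollary~\ref{cor:Xn} is a genuine sum of full subgroupoids realised via the degeneracy maps (so that finiteness of $d_1$ on the sum reduces to finiteness on each piece), and one needs the simplicial identities to correctly recognise the restriction of $d_1$ on each piece. Both follow from the material already developed in Sections~\ref{sec:complete} and the basic simplicial identities, so no new input is required.
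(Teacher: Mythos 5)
Your proof is correct and takes essentially the same route as the paper: your $(2)\Rightarrow(1)$ step is exactly the paper's argument (split $X_2$ via the word decomposition into $\nondeg X_2$ plus degenerate summands and observe that $d_1$ restricted to each degenerate summand is a monomorphism, hence finite), and your $(1)\Rightarrow(3)$ step via Lemma~\ref{lem:s0d1} together with pullback- and composition-stability of finite maps is a correct fleshing-out of what the paper dismisses with ``the third is similar.'' No gaps.
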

\begin{proof}
  We show the first two conditions are equivalent; the third is similar.
  Using the word notation of \ref{w} we consider the map
  $$
  \nondeg X_2 + \nondeg X_1 + \nondeg X_1 + X_0\xrightarrow{~\simeq~}
  \nondeg X_2 + X_{0a} + X_{a0} + X_{00} \xrightarrow{~=~}X_2\xrightarrow{~d_1~}X_1
  $$
  Thus $d_1:X_2\to X_1$ is finite if and only if the restriction of this map to
  the first component, $d_1:\nondeg X_2\to X_1$, is finite.  By completeness the
  restrictions to the other components are finite (in fact, mono).
\end{proof}

\begin{cor}
  A complete decomposition space $X$ is locally finite if and only if $X_1$ is
  locally finite and $d_1^{r-1}:\nondeg X_r\to X_1$ is finite for all $r\geq 2$.
\end{cor}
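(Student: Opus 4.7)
The plan is to read the corollary directly off Lemma~\ref{lem:comp-finite}, once we have observed that, under the completeness hypothesis, the other finiteness condition in the definition of locally finite decomposition space (namely on $s_0$) comes for free.

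First I would unpack Definition~\ref{finitary}: $X$ is locally finite precisely when $X_1$ is locally finite and both $s_0:X_0\to X_1$ and $d_1:X_2\to X_1$ are finite maps. Since ``$X_1$ locally finite'' appears on both sides of the claimed biconditional, the task reduces to showing that, under the standing completeness hypothesis, the joint finiteness of $s_0$ and $d_1$ is equivalent to the finiteness of $d_1^{r-1}:\nondeg X_r\to X_1$ for all $r\geq 2$. For the forward direction, assuming $d_1:X_2\to X_1$ is finite, the equivalence $(1)\Leftrightarrow(3)$ of Lemma~\ref{lem:comp-finite} immediately delivers the finiteness of $d_1^{r-1}:\nondeg X_r\to X_1$ for every $r\geq 2$.

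Conversely, assuming $d_1^{r-1}:\nondeg X_r\to X_1$ is finite for all $r\geq 2$, the case $r=2$ combined with $(2)\Rightarrow(1)$ of Lemma~\ref{lem:comp-finite} recovers the finiteness of $d_1:X_2\to X_1$. It remains only to produce the finiteness of $s_0:X_0\to X_1$, and this is exactly where completeness enters: by Definition~\ref{complete}, $s_0$ is a monomorphism, and by the discussion in~\ref{def:mono} every fibre of a monomorphism of $\infty$-groupoids is either empty or contractible, hence a finite $\infty$-groupoid. Therefore $s_0$ is finite automatically, with no further hypothesis.

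The only conceptual point to flag, rather than an obstacle, is this automatic finiteness of $s_0$: it uses the convention that a map is finite when its fibres are finite $\infty$-groupoids, together with the fact that the empty groupoid and the contractible groupoid both qualify as finite. Once this is noted, the proof is a two-line invocation of Lemma~\ref{lem:comp-finite} in each direction.
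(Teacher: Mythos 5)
Your proposal is correct and is exactly the argument the paper intends (the corollary is stated without proof as an immediate consequence of Lemma~\ref{lem:comp-finite}): the $d_1$-condition is converted via the equivalence $(1)\Leftrightarrow(3)$ of that lemma, and the finiteness of $s_0$ is automatic from completeness since a monomorphism has fibres that are empty or contractible, hence finite. Your explicit flagging of the $s_0$ point is a reasonable addition rather than a deviation.
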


\begin{blanko}{\M condition.}\label{M}
  A complete decomposition space $X$ is called {\em \M}
  if it is locally finite and \FILT (i.e.~of locally finite length).
  It then follows that  the restricted composition map
  $$
  \sum_r{d_1}^{r-1}:\sum_r \nondeg X_r\to X_1
  $$ 
  is finite.  In other words, the spans defining $\Phieven$ and $\Phiodd$ are of
  finite type, and hence descend to the finite slices $\grpd_{/X_1}$.
  In fact we have:
\end{blanko}

\begin{lemma}\label{lem:oldcharacterisationofM}
  A complete decomposition space $X$ is \M if and only if $X_1$ is locally
  finite and the restricted composition map
  $$
  \sum_r{d_1}^{r-1}:\sum_r \nondeg X_r\to X_1
  $$ 
  is finite. 
\end{lemma}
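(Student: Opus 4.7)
The plan is to establish the equivalence by unpacking both sides into a statement about fibres of the sum map over a fixed arrow $a \in X_1$, and then using Lemma~\ref{lem:comp-finite} on the $d_1$ side together with the basic fact that monomorphisms of $\infty$-groupoids are always finite maps. Throughout, the key structural input is that a map $p:M\to S$ is finite iff each fibre $M_a$ is a finite $\infty$-groupoid (locally finite with finitely many components), and correspondingly that (i) a direct summand of a finite $\infty$-groupoid is finite, and (ii) $\sum_r Y_r$ is finite iff each $Y_r$ is finite and only finitely many are nonempty.

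For the forward direction, assume $X$ is \M. Local finiteness of $X_1$ is then part of the hypothesis. To see that the sum map is finite, fix $a\in X_1$; its fibre decomposes as $\sum_r (\nondeg X_r)_a$. Each summand is finite: for $r\geq 2$ this is Lemma~\ref{lem:comp-finite} together with local finiteness, while for $r=0,1$ the contributing map is either $s_0$ or the inclusion $X_a\hookrightarrow X_1$, both monomorphisms (by completeness) and therefore finite. Tightness, via the characterisation in Proposition~\ref{prop:alt-length}, ensures that $(\nondeg X_r)_a = \varnothing$ for $r > \ell(a)$, so the fibre is a finite coproduct of finite $\infty$-groupoids, hence finite.

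For the reverse direction, assume $X_1$ is locally finite and the sum map is finite. From finiteness of the fibre $\sum_r (\nondeg X_r)_a$ over each $a\in X_1$, only finitely many summands can be nonempty, which is precisely $\ell(a)<\infty$ by Proposition~\ref{prop:alt-length}; this delivers tightness. Simultaneously, each $(\nondeg X_r)_a$ is a direct summand of a finite $\infty$-groupoid, hence finite, so in particular $\nondeg X_2\to X_1$ is a finite map, and Lemma~\ref{lem:comp-finite} upgrades this to finiteness of $d_1:X_2\to X_1$. For the remaining ingredient of local finiteness, $s_0:X_0\to X_1$ is a monomorphism by completeness, so its fibres are $(-1)$-groupoids, in particular finite, making $s_0$ a finite map.

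I do not anticipate a serious obstacle: the statement is essentially a translation exercise between ``$\ell(a)<\infty$ for each $a$'' and ``only finitely many summands of the fibre are nonempty''. The only point requiring care is the correct bookkeeping of the low-$r$ summands ($r=0,1$), whose contributing maps are monomorphisms rather than iterated face maps, and which must be treated by the general principle that monomorphisms of $\infty$-groupoids are finite. Everything else reduces to a direct application of Lemma~\ref{lem:comp-finite} and Proposition~\ref{prop:alt-length}.
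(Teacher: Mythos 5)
Your proof is correct and follows essentially the same route as the paper: the converse direction extracts finiteness of each $\nondeg X_r\to X_1$ from the sum map, upgrades it via Lemma~\ref{lem:comp-finite}, and reads off tightness from the emptiness of $(\nondeg X_r)_a$ for large $r$, exactly as in the paper's argument. The only difference is that you spell out the forward direction (which the paper dismisses as clear), including the correct treatment of the low-$r$ summands as monomorphisms.
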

  
\begin{proof}
  `Only if' is clear.  Conversely, if the map $m:\sum_r{d_1}^{r-1}:\sum_r
  \nondeg X_r\to X_1$ is finite, in particular for each individual $r$ the map
  $\nondeg X_r\to X_1$ is finite, and then also $X_r \to X_1$ is finite, by
  Lemma~\ref{lem:comp-finite}.  Hence $X$ is altogether locally finite.  But it
  also follows from finiteness of $m$ that for each $a\in X_1$, the fibre
  $(\nondeg X_r)_a$ must be empty for big enough $r$, so the filtration
  condition is satisfied, so altogether $X$ is \M.
\end{proof}

\begin{BM}
  If $X$ is a Segal space, the \M condition says that for each arrow $a\in X_1$,
  the factorisations of $a$ into nondegenerate $a_i\in \nondeg X_1$ have
  bounded length.   In particular, if $X$ is the strict nerve of a $1$-category, then it is \M in the sense of
  the previous definition if and only if it is \M in the sense of 
  Leroux~\cite{Leroux:1975}.  (Note
  however that this would also have been true if we had not included the
  condition that $X_1$ be locally finite (as obviously this is automatic for any
  discrete set).  We insist on including the condition $X_1$ locally finite
  because it is needed in order to have a well-defined cardinality.)
\end{BM}

\begin{blanko}{Filtered coalgebras in vector spaces.}
  A \M decomposition space is in particular length-filtered.
  The coalgebra filtration (\ref{coalgebrafiltGrpd}) at the objective level
  $$
  \Grpd_{/X_1^{(0)}} \to \Grpd_{/X_1^{(1)}} \to \cdots \to \Grpd_{/X_1} 
  $$
  is easily seen to descend to $\grpd$-coefficients (finite 
  $\infty$-groupoids):
    $$
  \grpd_{/X_1^{(0)}} \to \grpd_{/X_1^{(1)}} \to \cdots \to \grpd_{/X_1}  ,
  $$
  and taking cardinality then yields a coalgebra filtration at
  the numerical level too.
  From the arguments in \ref{coalgebrafiltGrpd}, it follows that this
  coalgebra filtration
  $$
  C_0 \into C_1 \into \cdots \into C
  $$
  has the property that $C_0$ is generated by group-like elements. (This property
  has been found useful in the context of perturbative
  renormalisation~\cite{Kock:1411.3098}, \cite{Kock:1512.03027}, where it serves
  as a basis for recursive arguments, as an alternative to the more
  common assumption of connectedness.)  Finally, if $X$ is a graded \M
  decomposition space, then the resulting coalgebra at the algebraic level is
  furthermore a graded coalgebra.
\end{blanko}

The following is an immediate corollary to Lemma~\ref{lem:Rezk}.
It
extends the classical fact that a \M category in the sense of Leroux does not
have non-identity invertible arrows~\cite[Lemma~2.4]{LawvereMenniMR2720184}.

\begin{cor}\label{cor:MSegal=Rezk}
  Every \M decomposition space $X$ is Rezk complete.
\end{cor}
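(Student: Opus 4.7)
The plan is to simply unpack the definition of \M decomposition space and invoke Proposition~\ref{prop:FILTSegal=Rezk}. By Definition~\ref{M}, a \M decomposition space is, by construction, a complete decomposition space which is both locally finite and \FILT. In particular, $X$ is a \FILT (tight) decomposition space. Since by hypothesis $X$ is also a Segal space, the hypotheses of Proposition~\ref{prop:FILTSegal=Rezk} are satisfied, and we conclude that $X$ is Rezk complete.

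It is worth noting that the local finiteness half of the \M condition plays no role in this corollary; what drives the conclusion is exclusively the locally finite length condition. The underlying mechanism, already extracted in the proof of Proposition~\ref{prop:FILTSegal=Rezk}, is that in a Segal space the nondegenerate $n$-simplices are precisely strings of $n$ composable nondegenerate arrows (Lemma~\ref{lem:Segal:nondeg}). A hypothetical nondegenerate invertible arrow $a : x \to y$ would then, together with its inverse, produce nondegenerate simplices of arbitrary length (alternating subdivisions of $\id_x$ or $\id_y$), contradicting tightness; hence every nondegenerate arrow must be non-invertible, which is the Rezk completeness condition.

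There is essentially no obstacle here: the corollary is formal, the substantive content being entirely in Proposition~\ref{prop:FILTSegal=Rezk}. The proof reduces to a one-line citation.
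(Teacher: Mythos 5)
Your proof is correct and matches the paper's: the corollary is stated there as an immediate consequence of Proposition~\ref{prop:FILTSegal=Rezk}, since a \M decomposition space is by definition \FILT. Your additional recap of the mechanism (invertible nondegenerate arrows would yield arbitrarily long nondegenerate simplices via Lemma~\ref{lem:Segal:nondeg}) is accurate but not needed beyond the one-line citation.
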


\begin{blanko}{\M inversion at the algebraic level.}
  Assume $X$ is a locally finite complete decomposition space.
  The span
  $
  \xymatrix@R-15pt{
   X_1  & \ar[l]_=  X_1\ar[r] &  1}
  $
  defines the zeta functor (cf.~\ref{zeta}), which as a presheaf is 
  $\zeta=\int^t h^t$, the homotopy sum of the representables.
  Its cardinality is the usual zeta 
  function in the incidence algebra $\Q^{\pi_0X_1}$.

  The spans $\xymatrix@R-15pt{
   X_1  & \ar[l]  \nondeg X_r\ar[r] &  1}$ define the Phi functors
  \begin{eqnarray*}
    \Phi_r: \Grpd_{/ X_1} & \longrightarrow & \Grpd ,
  \end{eqnarray*}
  with $\Phi_0=\varepsilon$. By Lemma~\ref{lem:comp-finite},
  these functors descend to 
    \begin{eqnarray*}
    \Phi_r: \grpd_{/ X_1} & \longrightarrow & \grpd ,
  \end{eqnarray*}
 and we can take cardinality to obtain functions
$
\norm{\zeta}: \pi_0(X_1) \to \Q
$
and $\norm{\Phi_r} : \pi_0(X_1) \to \Q$, elements in the incidence algebra 
$\Q^{\pi_0X_1}$.

Finally, when $X$ is furthermore assumed to be \M,
we can take cardinality of the abstract \M
inversion formula of Theorem \ref{thm:zetaPhi}:
\end{blanko}

\begin{thm}\label{thm:|M|}
  If $X$ is a \M decomposition space,
  then the cardinality of the zeta functor, $\norm{\zeta}:\Q_{\pi_0 X_1}\to\Q$,
  is convolution invertible with inverse $\norm{\mu}:= \norm{\Phieven} - \norm{\Phiodd}$:
  $$
  \norm\zeta * \norm\mu = \norm\varepsilon = \norm\mu * \norm\zeta .
  $$
\end{thm}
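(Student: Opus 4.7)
The plan is to take homotopy cardinality of the objective-level M\"obius inversion identity of Theorem~\ref{thm:zetaPhi}. First I would check that each ingredient descends from $\LIN$ to the numerical world. By Lemma~\ref{lem:oldcharacterisationofM}, the \M condition is precisely the finiteness of the span $X_1 \leftarrow \sum_r \nondeg X_r \to 1$, so for each $r\geq 0$ the span defining $\Phi_r$ is of finite type; hence, by Lemma~\ref{finitetypespan}, $\Phi_r$ restricts to a linear functor $\grpd_{/X_1}\to\grpd$ and can be cardinalised to an element of $\Q^{\pi_0 X_1}$. The same applies to $\zeta$ and $\epsilon$ (using that $s_0$ and $d_1$ are finite). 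Local finiteness of $X_1$ is what guarantees these cardinalities land in an honest incidence algebra $\Q^{\pi_0 X_1}$ with a well-defined convolution product.

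Next I would argue that $\Phieven = \sum_{r\text{ even}}\Phi_r$ and $\Phiodd = \sum_{r\text{ odd}}\Phi_r$ also descend. The key point is the tightness half of the \M condition: for each fixed $f\in X_1$ the fibre $(\nondeg X_r)_f$ is empty for all $r>\ell(f)$ by Proposition~\ref{prop:alt-length}. So fibrewise over $X_1$ the homotopy sums defining $\Phieven$ and $\Phiodd$ are actually finite sums, and therefore the composite span $X_1 \leftarrow \sum_{r\text{ even}} \nondeg X_r \to 1$ (resp.\ odd) is again of finite type. Applying cardinality gives well-defined elements $\norm{\Phieven}, \norm{\Phiodd}\in \Q^{\pi_0 X_1}$.

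Now I would apply the cardinality functor to the identity
\[
\zeta * \Phieven \;\simeq\; \epsilon \;+\; \zeta * \Phiodd
\]
of Theorem~\ref{thm:zetaPhi}. Since cardinality is a symmetric monoidal functor from $\LIN^{\mathrm{fin}}$ to $\Q$-vector spaces (sending tensor to tensor and spans to matrices, cf.~\cite{GKT:HLA}), it sends objective convolution to numerical convolution and sends equivalence of spans to equality of cardinalities, so
\[
\norm\zeta * \norm{\Phieven} \;=\; \norm\epsilon \;+\; \norm\zeta * \norm{\Phiodd}.
\]
Since $\Q^{\pi_0 X_1}$ has additive inverses (this was precisely the ``minus sign'' we lacked at the objective level), we may rearrange as $\norm\zeta * (\norm{\Phieven} - \norm{\Phiodd}) = \norm\epsilon$, i.e.\ $\norm\zeta * \norm\mu = \norm\epsilon$. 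Exactly the same argument applied to the mirror identity $\Phieven * \zeta \simeq \epsilon + \Phiodd * \zeta$ gives $\norm\mu * \norm\zeta = \norm\epsilon$, so $\norm\mu$ is a two-sided convolution inverse of $\norm\zeta$.

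The only delicate point, and the obstacle the \M hypothesis is designed to overcome, is the passage from the infinite homotopy sums $\Phieven = \sum_{r\text{ even}} \Phi_r$ to a genuine vector-space element: without the tight condition these sums need not have finite fibres, and without local finiteness the resulting span would not define a prolinear map between incidence (co)algebras at the numerical level. Once both finiteness conditions are in force, the whole argument is essentially functoriality of cardinality applied to Theorem~\ref{thm:zetaPhi}.
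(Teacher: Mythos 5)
Your proposal is correct and follows essentially the same route as the paper: the \M condition (via Lemma~\ref{lem:comp-finite} and the characterisation in Lemma~\ref{lem:oldcharacterisationofM}) makes the span $X_1 \leftarrow \sum_r \nondeg X_r \to 1$ finite, so $\zeta$, $\epsilon$ and the $\Phi$ functors, including the sums $\Phieven$ and $\Phiodd$, descend to $\grpd_{/X_1}$ and admit cardinalities, after which one takes the cardinality of Theorem~\ref{thm:zetaPhi} and rearranges using the minus sign now available in $\Q^{\pi_0 X_1}$. Your extra remarks on why the fibrewise sums are finite (tightness) and on the functoriality of cardinality are exactly the justifications the paper leaves implicit.
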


\begin{blanko}{Example (continued).}
  We have seen that the decomposition space $\GG$ of finite graphs of
  Example~\ref{ex:graphsdecomp} is complete, tight, and locally finite,
  (Examples \ref{ex:graphscomplete}, \ref{ex:graphstight},
  and~\ref{ex:graphslocfin}, respectively).  Hence it is a M\"obius
  decomposition space.  The general M\"obius inversion formula $\mu =
  \Phieven - \Phiodd$ yields a M\"obius inversion formula in the chromatic
  Hopf algebra, but at the numerical level this is not the most economical.
  The well-known cancellation-free formula $\mu(G) = (-1)^n$, where $n$ is
  the number of vertices of $G$, can be established by exploiting the \culf
  functor to the decomposition space of finite sets mentioned at the end of
  Example~\ref{ex:graphsdecomp}.  This argument works more generally, for
  any decomposition space arising as a restriction
  species~\cite{GKT:restriction}.
\end{blanko}

\end{document}